\newtheorem{theorem}{Theorem}[section]
\newtheorem{lemma}[theorem]{Lemma}
\newtheorem{proposition}[theorem]{Proposition}
\theoremstyle{definition}
\newtheorem{definition}[theorem]{Definition}
\newtheorem{assum}{Assumption}[section]
\newtheorem{remark}[theorem]{Remark}
\numberwithin{equation}{section}
\begin{document}
\makeatletter
\begin{center}
\Large{\bf Duality theory for optimistic bilevel optimization}
\end{center}\vspace{5mm}
\begin{center}

\textsc{Houria En-Naciri\footnote{Laboratoire LASMA, Department of Mathematics, Sidi Mohammed Ben Abdellah University, Morocco (email: \textsf{houria.ennaciri@edu.umi.ac.ma})}  Lahoussine Lafhim\footnote{Laboratoire LASMA, Department of Mathematics, Sidi Mohammed Ben Abdellah University, Morocco (email: \textsf{lahoussine.lafhim@usmba.ac.ma})}   Alain  Zemkoho\footnote{School of Mathematical Sciences, University of Southampton, UK (email: \textsf{a.b.zemkoho@soton.ac.uk}). This author's work is supported by the EPSRC grant EP/V049038/1 and the Alan Turing Institute under the EPSRC grant EP/N510129/1}}\end{center}

\vspace{2mm}

\footnotesize{
\noindent\begin{minipage}{14cm}
{\bf Abstract:}
In this paper, we exploit the so-called value function reformulation of the bilevel optimization problem to develop duality results for the problem. Our approach builds on Fenchel-Lagrange-type duality to establish suitable results for the bilevel optimization problem. First, we overview some standard duality results to show that they are not applicable to our problem. Secondly, via the concept of partial calmness, we establish  weak and strong duality results. In particular, Lagrange, Fenchel-Lagrange, and Toland-Fenchel-Lagrange duality concepts are investigated for this type of problems under some suitable conditions. Thirdly, based on the use of some regularization of our bilevel program, we  establish sufficient conditions ensuring strong  duality results under a generalized Slater-type condition without convexity assumptions and without the partial calmness condition. Finally, without the Slater condition, a strong duality result is constructed for  the bilevel optimization problem with geometric constraint.
\end{minipage}
 \\[5mm]

\noindent{\bf Keywords:} {optimistic bilevel optimization, lower level value function reformulation, duality theory}\\
\noindent{\bf Mathematics Subject Classification 2020:} {90C46, 	90C26, 	90C31}

\hbox to14cm{\hrulefill}\par

\section{Introduction}
In this paper, we are concerned with the following standard optimistic bilevel programming problem
\begin{equation}\label{calm0}\tag{P}
{\displaystyle \min_{x, y}} \ F(x, y) \;\;\mbox{s.t.} \;\; G_{i}\left( x,y\right) \leq 0, \; \; i\in I_{k}, \;\; y\in \mathcal{S}(x),
\end{equation}
where, for each $x\in \mathbb{R}^{n}$, $\mathcal{S}(x)$ represents the optimal solution set of the lower level problem

\begin{equation}\label{llp-1}\tag{L[$x$]}
{\displaystyle \min_{y}} \ f(x, y) \;\; \mbox{ s.t. } \;\; y\in K\left(x\right),
\end{equation}
$F :\mathbb{R}^n \times \mathbb{R}^m \rightarrow \mathbb{R}\cup \{+\infty\}$ and $f :\mathbb{R}^n \times \mathbb{R}^m \rightarrow \mathbb{R}\cup \{+\infty\}$ are the upper and lower level objective functions, while $G_i: \mathbb{R}^{n} \times \mathbb{R}^{m} \rightarrow \mathbb{R}\cup \{+\infty\}$ for $i\in I_{k} :=\{1,\ldots, k\}$ ($k\in \mathbb{N}\setminus\{0\}$) denotes the upper level constraint functions. Throughout the paper, we confine ourselves to the case where the lower level feasible set is defined, for all $x\in \mathbb{R}^n$, by
\begin{equation*}
K(x):=\left\{y\in \mathbb{R}^{m} \mid g_{j}(x, y) \leq 0, \; \; j\in I_{p} :=\{1,\cdots,p \}\right\}
\end{equation*}
with $g_j: \mathbb{R}^{n} \times \mathbb{R}^{m} \rightarrow \mathbb{R}\cup \{+\infty\}$ for $j=1, \ldots, p$ ($p\in \mathbb{N}\setminus\{0\}$).  The objective functions $F$ and $f$, as well as the constraint functions $G_i$ for $i=1, \ldots, k$ and $g_j$ for $j=1, \ldots, p$, are assumed to be proper convex functions. In the sequel, from \cite{zali}, we adopt the following standard operations involving $+\infty$ and $-\infty$:
\begin{equation*}
	\begin{array}{rll}
	\left( +\infty\right) -\left( +\infty\right)= \left( -\infty\right)-\left( -\infty\right)=\left( +\infty\right)+\left( -\infty\right)=\left( -\infty\right)+\left( +\infty\right) &=& +\infty,\\
	0 \cdot \left( +\infty\right) &=& +\infty,\\
 0 \cdot \left( -\infty\right) &=&0.
		\end{array}
\end{equation*}

Our main goal in this paper is to study the duality theory for the bilevel optimization problem \eqref{calm0}, which corresponds to the optimistic version of the problem; more details on bilevel optimization and related properties can be found in \cite{DempeFoundations2002,DempeZemkohoBook}, where surveys on different classes on the problem, including the pessimistic version, are also given.

The literature on duality theory for general optimization problems is very rich. For example, strong duality for nearly convex optimization problems is established by Bo\c{T} and Wanka \cite{ref10}, where three kinds of conjugate dual problems are devoted to the primal optimization problem; i.e., the Lagrange, Fenchel, and  Fenchel-Lagrange dual problems. Also, Mart\'{\i}nez-Legaz and Volle \cite{ref44} introduced in particular a duality scheme for the problem of minimizing the difference of two extended real-valued convex functions (i.e., d.c. functions for short) under finitely many d.c. constraints in terms of the Legendre-Fenchel conjugates of the involved convex functions. Another related interesting problem in conjugate duality theory developed by  Rockafellar (see \cite{ref11} and \cite{ref12}) is the well-known perturbational approach, consisting in the use of a perturbation function as the keystone to obtain a dual problem for a general primal one by means of the Fenchel conjugation. Both problems satisfy weak duality (in the sense that the optimal value of the dual problem is less than or equal to the optimal value of the primal one), as a direct consequence of the Fenchel–Young inequality, whereas conditions ensuring strong duality (no duality gap and dual problem solvable) can be found in these references and many other ones in the literature.

However, only a small number of papers have been dedicated to duality theory for bilevel optimization problems. In \cite{refext}, Aboussoror and Adly adapted the Fenchel–Lagrange duality, which has been first introduced by Bo\c{t} and Wanka in \cite{ref bot} and tailored to convex programming problems, showing that a strong duality result holds between a bilevel nonlinear optimization problem with extremal-value function and its Fenchel-Lagrange dual under appropriate assumptions. Afterwards, a duality approach for the strong bilevel programming problem \eqref{calm0} via a regularization and the Fenchel-Lagrange duality was presented by Aboussoror, Adly and Saissi in \cite{refextend}. More precisely, using $\varepsilon-$approximate solutions of the lower level problem, they consider a regularized problem  (\ref{calm0}$_{\epsilon}$) of \eqref{calm0}. Via this regularization and the operation of scalarization they present in \cite{ref semi vect}, a duality approach using conjugacy for a semivectorial bilevel programming problem with  the upper and lower levels being vectorial and scalar respectively, are proposed.

In this paper, we overview the literature to construct appropriate duality results for bilevel optimization problems, including Fenchel, Lagrange-Fenchel, and Toland-Lagrange-based techniques. Clearly, the aim of this paper is two-fold. We explore standard duality results from the literature and show their limitations, in terms of their applicability to the bilevel optimization problem. Secondly, we explore a number of approaches to develop tractable approaches to obtain duality results for the bilevel optimization problem. In the latter case, we first use an exact penalization of problem \eqref{calm0} based on the well-know concept of  partial calmness (see \cite{ye}) to completely characterize Fenchel-Lagrange-type results. The key to this exact penalization is that the resulting problem can be viewed as a minimization problem of a d.c. objective function under convex constraints. Note that the technique used to get such a transformation is inspired by the work of \cite{ref+,ref44}, and conjugacy is the main tool involved in this technique. The results are obtained under the Slater constraint qualification and further regularity conditions and some closedness conditions. Additionally, based on the use of a regularization of problem \eqref{calm0}, we establish sufficient conditions ensuring strong duality results under a generalized Slater-type condition without convexity assumptions and without the partial calmness condition. Finally, without any  Slater-type condition, strong duality results are investigated for the bilevel optimization problem with geometric constraint.

In next section, we first recall some definitions of convex analysis tools needed in this work. The preliminaries are mainly concerned with relevant properties of the subdifferential, also of the epigraph and some conjugate properties of convex functions. In Section $3$, we collect some existing results while showing their limitations for our program. In Section $4$, we investigate duality theory for problem \eqref{calm0} using the partial calmness under some appropriate conditions. Subsequently, we completely characterize the strong duality for problem \eqref{calm0} in the presence of partial calmness. In Section $5$ however, in the absence of partial calmness, but under a generalized Slater constraint qualification, we provide a strong duality for the nonconvex bilevel optimization problem \eqref{calm0}. Then, the special case where the bilevel problem is geometrically constrained  is also studied. Finally, we conclude our work in Section $6$.

\section{Preliminaries}

%%%%%%%%%%%%%%%%%%%%%%%%%%%%%%%%%%%%%%%%%%%%%%%%%%%%%%%%%%%%%%%%%%%%%%%%%%%%%%%%%%%%%%%%%%%%%%%%%%%%%%%%%%%%%%%%%%%%%%%%%%%%%%%%%%%%%%%%%%%
Here, we recall some fundamental definitions and results essentially related to convex mappings. To proceed, let $D$ be a subset of $\mathbb{R}^{n}$. The sets $int D$, $cl D$ and $co D$, stand for the topological interior, the closure and the convex hull of $D$, respectively. Moreover, the convex cone (containing the origin of $\mathbb{R}^{n}$) generated by $D$ is denoted by $cone D$, and its closure is $\overline{cone} D$, the positive polar cone of $D$ is given by
\[
D^{\circ}=\left\{x^{*} \in \mathbb{R}^{n}:\left\langle x^{*}, x\right\rangle \geqslant 0, \ \forall x \in D\right\},
\]
while the indicator function $\delta_{D}: \mathbb{R}^{n} \rightarrow \mathbb{R}\cup\{+\infty\}$ of $D$ is defined by
\[
\delta_{D}(x)=\left\{\begin{array}{ll}
0, & \text { if } x \in D, \\
+\infty, & \text { if } x \notin D.
\end{array}\right.
\]
Additionally, the support function $\sigma_{D}: \mathbb{R}^{n} \rightarrow \bar{\mathbb{R}}=\mathbb{R}\cup\{-\infty\}\cup\{+\infty\}$ of $D$ is
$$
\sigma_{D}\left(x^{*}\right)=\sup _{x \in D}\left\langle x^{*}, x\right\rangle .
$$
For the optimization problem \eqref{calm0} we denote by ${\mathcal V}\left( \ref{calm0}\right) $ its optimal objective value and this notation is extended to the optimization problems that we use in this paper.

For a given extend real valued function $f: \mathbb{R}^{n} \rightarrow \bar{\mathbb{R}}$, the epigraph, the lower semicontinuous hull, and the effective domain of $f$ are respectively defined by
\begin{equation*}
\begin{array}{rll}
\operatorname{epi} f & = & \left\{(x, r) \in \mathbb{R}^{n} \times \mathbb{R}\mid f(x) \leqslant r\right\},\\[1ex]
\operatorname{epi}(\operatorname{cl}f) & = &\operatorname{cl}(\operatorname{epi}f),\\[1ex]
\operatorname{dom}f & = &\left\{x \in \mathbb{R}^{n} \mid f(x)<+\infty\right\}.
\end{array}
\end{equation*}

We say that $f$ is proper if $f(x)>-\infty$ for all $x \in \mathbb{R}^{n}$, and $\operatorname{dom}f \neq \emptyset$.
\begin{definition}
Let $f: \mathbb{R}^{n} \rightarrow \bar{\mathbb{R}}$  be a function, and $D$ a nonempty subset of $\mathbb{R}^{n}$.
 The conjugate function of $f$ relative to the set $D$ is denoted by $f_{D}^{*}$ and defined on $\mathbb{R}^{n}$ by
\begin{equation*}\label{eq.dual}
f_{D}^{*}(p)=\sup _{x \in D}\left\{\langle p, x\rangle-f(x)\right\},
\end{equation*}
where $\langle., .\rangle$, denotes the inner product of two vectors in $\mathbb{R}^{n}$, i.e., for given $x=\left(x_{1}, \ldots, x_{n}\right)^{T}$ and $y=\left(y_{1}, \ldots, y_{n}\right)^{T}$, we have $\langle x, y\rangle=\displaystyle\sum_{i=1}^{n} x_{i} y_{i} .$ When $D=\mathbb{R}^{n},$
we get the usual Legendre-Fenchel conjugate function of $f$ denoted by $f^{*}$.
\end{definition}
 Let $D \subset \mathbb{R}^{n}$ and $\bar x\in cl D$.  The contingent cone to $D$ at $\bar{x}$ is the set
 $$
 T\left(\bar{x},D\right) =\left\{y \in \mathbb{R}^{n} \mid \exists t_{k} \downarrow 0, \exists y_{k} \rightarrow y, \bar{x}+t_{k} y_{k} \in D \ \forall k\in \mathbb{N}\right\}.
 $$
\begin{definition}
 Let $f: \mathbb{R}^{n} \rightarrow \mathbb{R} \cup\{+\infty\}$ be a proper convex function and $\bar{x} \in \operatorname{dom}f$.
  The subdifferential in the sense of convex analysis of $f$ at $\bar{x}$ denoted by $\partial f(\bar{x})$ is the set
 $$
 \partial f(\bar{x})=\left\{x^{*} \in \mathbb{R}^{n}\mid  f(x) \geq f(\bar{x})+\left\langle x^{*}, x-\bar{x}\right\rangle \ \forall x \in \mathbb{R}^{n}\right\}.
 $$
 An element $x^{*} \in \partial f(\bar{x})$ is called a subgradient of $f$ at $\bar{x}$.
\end{definition}
\begin{remark}\label{eq.}
Let $f: \mathbb{R}^{n} \rightarrow \mathbb{R} \cup\{+\infty\}$. Then, the following assertions hold:
\begin{itemize}
\item[(i)] $x^{*} \in \partial f(\bar{x})\ \text{if and only if}\ \left\langle x^{*}, \bar{x}\right\rangle=f(\bar{x})+f^{*}\left(x^{*}\right)$;
\item[(ii)]  $f(x)+f^{*}\left(x^{*}\right) \geq\left\langle x^{*}, x\right\rangle \ \ \  \forall x, x^{*} \in \mathbb{R}^{n}$ (the Young-Fenchel inequality);
\item[(iii)] If $\operatorname{cl}f$ is proper and convex, then it holds that $f^{**}=\operatorname{cl}f$.
\end{itemize}
\end{remark}
\begin{proposition}\label{inf-sum}
 Let $f_{1}, \ldots, f_{k}: \mathbb{R}^{n} \rightarrow \mathbb{R} \cup\{+\infty\}$ be proper convex functions. If the set $\displaystyle\bigcap_{i=1}^{k} \operatorname{ri}\left(\operatorname{dom}\left(f_{i}\right)\right)$ is nonempty, then
$$
\left(\sum_{i=1}^{k} f_{i}\right)^{*}(p)=\left(f_{1}^{*} \square \ldots \square f_{k}^{*}\right)(p),
$$
where for a nonempty subset $B$ of $\mathbb{R}^{n}$, $\text{ri} B$ denotes the relative interior of $B$, i.e., the interior
		of $B$ relative to the smallest affine set containing $B$, and
		$$\left(f_{1}^{*} \square \ldots \square f_{k}^{*}\right)(p):=\inf \left\{\sum_{i=1}^{k} f_{i}^{*}\left(p_{i}\right): p=\sum_{i=1}^{k} p_{i}\right\}$$
is the infimal convolution of $f_{1}^{*}, \ldots , f_{k}^{*}$, and for each $p \in \mathbb{R}^{n}$ the infimum is attained.
\end{proposition}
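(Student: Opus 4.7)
The plan is to split the two inequalities and exploit the relative interior hypothesis only where it is genuinely needed, namely for the $\geq$ direction and the attainment claim; the $\leq$ direction is just a repackaging of the Young--Fenchel inequality from Remark~\ref{eq.}(ii). Concretely, for any decomposition $p = \sum_{i=1}^{k} p_{i}$ and any $x \in \mathbb{R}^{n}$, the identity $\langle p, x\rangle - \sum_{i=1}^{k} f_{i}(x) = \sum_{i=1}^{k} (\langle p_{i}, x\rangle - f_{i}(x))$ combined with the supremum characterization of each conjugate yields $(\sum_{i} f_{i})^{*}(p) \leq \sum_{i=1}^{k} f_{i}^{*}(p_{i})$; taking the infimum over decompositions gives $(\sum_{i} f_{i})^{*}(p) \leq (f_{1}^{*} \square \cdots \square f_{k}^{*})(p)$ without any qualification.

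For the reverse inequality together with attainment, I would argue by induction on $k$, concentrating all of the analytic content in the base case $k = 2$. Fix $p$ and set $\alpha := (f_{1}+f_{2})^{*}(p)$; the goal is to produce $p_{1} \in \mathbb{R}^{n}$ with $f_{1}^{*}(p_{1}) + f_{2}^{*}(p-p_{1}) \leq \alpha$, since equality and attainment then follow at once from the already-proven $\leq$ direction. I would introduce two convex subsets of $\mathbb{R}^{n} \times \mathbb{R}$, essentially the epigraph of $x \mapsto f_{1}(x) - \langle p, x\rangle + \alpha$ and the hypograph of $-f_{2}$, and verify from the definition of $\alpha$ that their relative interiors are disjoint. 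A proper separation theorem then furnishes a nontrivial affine functional separating the two sets, and the hypothesis $\bigcap_{i=1}^{k} \text{ri}(\operatorname{dom} f_{i}) \neq \emptyset$ is precisely what rules out the degenerate case of a vertical separating hyperplane; normalizing its slope in the form $(-p_{1}, 1)$ and reading off the resulting two inequalities delivers the required decomposition together with $f_{1}^{*}(p-p_{1}) + f_{2}^{*}(p_{1}) \leq \alpha$.

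The inductive step $k-1 \to k$ is then a routine grouping: write $\sum_{i=1}^{k} f_{i} = (\sum_{i=1}^{k-1} f_{i}) + f_{k}$, use the standard fact that under the standing assumption $\bigcap_{i=1}^{k-1} \text{ri}(\operatorname{dom} f_{i}) = \text{ri}(\operatorname{dom} \sum_{i=1}^{k-1} f_{i})$ and meets $\text{ri}(\operatorname{dom} f_{k})$, apply the base case to this pair together with the associativity of infimal convolution, and invoke the induction hypothesis to expand $(\sum_{i=1}^{k-1} f_{i})^{*}$.

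The principal obstacle is the separation step in the base case, and specifically the task of ruling out a vertical separating hyperplane so that a genuine slope $p_{1}$ can be extracted and the infimum over decompositions attained. This is exactly where the relative-interior hypothesis is indispensable: without it, the infimum in the infimal convolution can fail both to match the left-hand side and to be achieved. In effect the proof reduces to re-deriving Rockafellar's classical conjugate-sum theorem at the case $k=2$ and then propagating it through a straightforward induction.
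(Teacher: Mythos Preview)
Your argument is essentially correct and follows the classical route (Young--Fenchel for the easy inequality, a proper separation argument in $\mathbb{R}^{n}\times\mathbb{R}$ for the reverse inequality and attainment in the case $k=2$, then induction via associativity). One technical point worth tightening: before setting up the separation you should dispose of the case $\alpha=(f_{1}+f_{2})^{*}(p)=+\infty$ separately (the claim is then vacuous), and note that $\alpha>-\infty$ because $f_{1}+f_{2}$ is proper under the relative-interior hypothesis; only for finite $\alpha$ does the epigraph/hypograph separation picture make sense.

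As for comparison with the paper: the paper does not prove Proposition~\ref{inf-sum} at all. It is stated in the preliminaries section as a known result from convex analysis (it is Rockafellar's conjugate-sum theorem, see \cite{ref11}), and is simply invoked later in the proof of the strong-duality theorem in Section~4.1. So there is no ``paper's own proof'' to compare against; your sketch is a faithful reconstruction of the standard textbook proof that the paper takes for granted.
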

\begin{proposition}\label{cl2}
Let $I$ be an index set, and let $\left\{f_{i}: i \in I\right\}$ be a family of proper convex functions, where $f_{i}:\mathbb{R}^{n} \rightarrow \mathbb{R} \cup\{+\infty\}$. Then the following statements hold:
 \begin{itemize}
 \item[(i)] $\operatorname{epi}\left(\underset{i \in I}{\sup}f_{i}\right)=\displaystyle\underset{i \in I}{\bigcap} \operatorname{epi}f_{i}$;
 \item[(ii)]  $\left(\underset{i \in I}{\inf}  f_{i}\right)^{*}=\underset{i \in I}{\sup} f_{i}^{*}$ and hence, $\operatorname{epi}\left(\underset{i \in I}{\inf } f_{i}\right)^{*}=\displaystyle\underset{i \in I}{\bigcap}  \operatorname{epi} f_{i}^{*}$;
 \item[(iii)] If the set $I$ is finite and  $\displaystyle\bigcap_{i=1}^{k} \operatorname{ri}\left(\operatorname{dom}\left(f_{i}\right)\right)$ is nonempty, then
 $$
 \operatorname{epi}\left(\left(\sum_{i=1}^{k} f_{i}\right)^{*}\right)=\sum_{i=1}^{k} \operatorname{epi}\left(f_{i}^{*}\right) .
 $$
 \end{itemize}
\end{proposition}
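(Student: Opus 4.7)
The plan is to treat the three parts in order, leaning entirely on the definitions of epigraph, conjugate, and infimal convolution, together with Proposition \ref{inf-sum} for the third part.

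For part (i), I would just unfold the definitions. A pair $(x,r)$ belongs to $\operatorname{epi}(\sup_{i\in I}f_i)$ exactly when $\sup_{i\in I}f_i(x)\le r$, which is equivalent to $f_i(x)\le r$ for every $i\in I$, i.e.\ $(x,r)\in\operatorname{epi}f_i$ for every $i$. Both set inclusions are immediate, so there is no subtlety here.

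For part (ii), I would first compute the conjugate pointwise: for each $p\in\mathbb{R}^n$,
\begin{equation*}
\Bigl(\inf_{i\in I}f_i\Bigr)^*(p)=\sup_{x\in\mathbb{R}^n}\Bigl\{\langle p,x\rangle-\inf_{i\in I}f_i(x)\Bigr\}=\sup_{x\in\mathbb{R}^n}\sup_{i\in I}\bigl\{\langle p,x\rangle-f_i(x)\bigr\},
\end{equation*}
and then swap the two suprema (which is always legal for unrestricted suprema) to obtain $\sup_{i\in I}f_i^*(p)$. Once this identity is in hand, the epigraph equality follows by applying part (i) to the family $\{f_i^*:i\in I\}$.

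For part (iii), the plan is to combine Proposition \ref{inf-sum} with the standard identity $\operatorname{epi}(g_1\square\cdots\square g_k)=\sum_{i=1}^k\operatorname{epi}(g_i)$ whenever the infimal convolution is exact (i.e.\ the infimum defining it is attained at every point of its domain). The relative-interior assumption is exactly what guarantees, via Proposition \ref{inf-sum}, that $\bigl(\sum_{i=1}^kf_i\bigr)^*=f_1^*\square\cdots\square f_k^*$ with the infimum attained for each $p$. The inclusion $\sum\operatorname{epi}(f_i^*)\subset\operatorname{epi}((\sum f_i)^*)$ is routine; the reverse inclusion is the place where attainment is used: given $(p,r)\in\operatorname{epi}((\sum f_i)^*)$, pick a decomposition $p=\sum p_i$ achieving the infimum, so $\sum f_i^*(p_i)\le r$, and then split $r=\sum r_i$ with $f_i^*(p_i)\le r_i$ to get $(p_i,r_i)\in\operatorname{epi}(f_i^*)$ summing to $(p,r)$.

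The main (minor) obstacle is really just part (iii): one has to be careful to invoke Proposition \ref{inf-sum} to upgrade the general inclusion $\sum\operatorname{epi}(f_i^*)\subset\operatorname{epi}(f_1^*\square\cdots\square f_k^*)$ to an equality via attainment. Parts (i) and (ii) are essentially bookkeeping with the definitions.
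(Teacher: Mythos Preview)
Your argument is correct in all three parts. The paper, however, does not actually prove this proposition: it is stated in the preliminaries section as a known background result, with no proof given. So there is no ``paper's own proof'' to compare against; what you have written is a standard and complete justification that the paper simply takes for granted.
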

 Finally, any point $p \in \mathbb{R}^{n}$ can be considered as a function on $\mathbb{R}^{n}$ in such a way that $p(x):=\langle p, x\rangle$, for any $x \in \mathbb{R}^{n} .$ Thus, for any $\alpha \in \mathbb{R}$ and any function $h: \mathbb{R}^{n}\rightarrow \bar{R}$,
 \begin{equation*}\label{eqr}
 \left( h+p+\alpha\right)^{\ast}\left( x^{\ast}\right)=h^{\ast}\left( x^{\ast}-p\right)-\alpha , \ \ \text{for all} \ x^{\ast}\in\mathbb{R}^{n}
  \end{equation*}
  and
 \begin{equation*}\label{eqr-1}
 \operatorname{epi}(h+p+\alpha)^{*}=\operatorname{epi} h^{*}+(p,-\alpha) .
 \end{equation*}
For convenience, let $\theta\in\mathbb{R}^{m}$ and $M: \mathbb{R}^{n}\rightarrow \mathbb{R}^{m}$, the composition mapping $\theta \circ K$ will be denoted by $\theta M$.
\section{Limitations of some existing theory}
Consider the following lower level value reformulation of the bilevel optimization problem \eqref{calm0} under consideration:
\begin{equation}\label{llvf}\tag{LLVF}
\left\{\begin{array}{ll}
{\displaystyle \min_{x, y}} \ F(x, y) \\
G_{i}(x,y) \leq 0 \quad & \forall i \in I_{k}=\{1,\dots,k\}, \\
g_{j}(x, y) \leq 0 & \forall j \in I_{p}=\{1,\dots,p\}, \\
f(x, y)-\varphi(x) \leq 0,
\end{array}\right.
\end{equation}
where $\varphi$ denotes the optimal value function of the lower level problem \eqref{llp-1}:
\[
\varphi(x):=\underset{y}\min~ \left\{ f(x,y)| \;\; y\in K(x)\right\}.
\]
\subsection{Convex optimization with only inequality constraints}
In this subsection, we review some duality results for the mathematical programming problem
\begin{equation}\label{w-o-ineq}\tag{P$_{in}$}
\min \{\phi \left( x\right)|\; \psi\left( x\right) \leq 0\}
\end{equation}
with $\phi:\mathbb{R}^{n}\rightarrow\mathbb{R}$ and $\psi =\left( \psi_{1},\cdots ,\psi_{p}\right)^{T}:\mathbb{R}^{n} \rightarrow\mathbb{R}^{r}$, where $\psi\left( x\right) \leq 0$ means that $\psi_{i}\left( x\right) \leq 0$ for all $i\in I_{r}=\{1,\cdots ,r\}$.
To define the dual problem of \eqref{w-o-ineq}, Bot et al. \cite{ref bot} considered a perturbation by using the following perturbation function:
\begin{equation*}
\Phi\left( x,\alpha ,\beta\right) =
\left\{
\begin{array}{lcl}
\phi\left( x+\alpha\right) & \ \ \ & \text{if} \ \psi\left( x\right) \leq \beta, \\
+\infty & \ \ \ & \text{otherwise}.
\end{array}
\right.
\end{equation*}
Afterwards, they calculate the conjugate of $\Phi$ and obtained the following Fenchel-Lagrange dual problem
\begin{equation}\label{w-o-ineq-dual}\tag{D$_{in}$}
{\displaystyle \sup_{\alpha^{\ast}\in\mathbb{R}^{n}, \ \beta^{\ast}\in\mathbb{R}_{+}^{r} } } \ \bigg\{ -\phi^{\ast} \left( \alpha^{\ast}\right) + {\displaystyle \inf_{x\in\mathbb{R}^{n}} } \ \left[ \langle \alpha^{\ast} ,x\rangle + \langle\beta^{\ast},\psi\left( x\right)\rangle \right]  \bigg\}.
\end{equation}
In general, to deal with strong duality which charaterizes \eqref{w-o-ineq}, we need to use a Slater-type constraint qualification.
\begin{definition}
We say that the genaralized Slater constraint qualification holds for the familly $\left( \psi_{i}\right)_{i\in I_{r}}$ if there exist an element $\bar{x}\in \mathbb{R}^{n}$ such that $\psi_{i}\left(\bar{x}\right) < 0$ for $i\in N$ and $\psi_{i}\left(\bar{x}\right) \leq 0$ for $i\in L$, where $N$ and $L$ are  respectively given as
\begin{equation*}
\begin{array}{ll}
N=\{i\in I_{r} \ |& \text{such that} \ \psi_{i} \ \text{is not an affine function} \},\\
L=\{i\in I_{r} \ |& \text{such that} \ \psi_{i} \ \text{is an affine function} \}. 
\end{array}
\end{equation*}
\end{definition}
\begin{theorem}[\cite{ref bot}]\label{f-l-convex}
Let $\phi$ and $\psi_{i}$, with $i\in I_{r}$ be convex functions. Assume that the genaralized Slater constraint qualification holds for the familly $\left( \psi_{i}\right)_{i\in I_{r}}$. If ${\mathcal V} \eqref{w-o-ineq}$ is finite, then strong duality holds and ${\mathcal V} \eqref{w-o-ineq} ={\mathcal V} \eqref{w-o-ineq-dual}$.
\end{theorem}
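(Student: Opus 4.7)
The plan is to cast \eqref{w-o-ineq} into Rockafellar's perturbational duality framework by studying the marginal function
\[
h(\alpha,\beta):=\inf_{x\in\mathbb{R}^{n}}\Phi(x,\alpha,\beta)=\inf\{\phi(x+\alpha)\mid \psi(x)\leq\beta\},
\]
which satisfies $h(0,0)=\mathcal{V}\eqref{w-o-ineq}$. Since $\phi$ and each $\psi_{i}$ are convex, $\Phi$ is jointly convex in $(x,\alpha,\beta)$, so $h$ is convex; moreover, finiteness of $\mathcal{V}\eqref{w-o-ineq}$ guarantees that $h$ is proper.

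My first task is to identify $\mathcal{V}\eqref{w-o-ineq-dual}$ with the biconjugate $h^{\ast\ast}(0,0)$. A direct interchange of suprema in the definition of $h^{\ast}$ yields $h^{\ast}(\alpha^{\ast},\beta^{\ast})=\Phi^{\ast}(0,\alpha^{\ast},\beta^{\ast})$. Evaluating $\Phi^{\ast}(0,\alpha^{\ast},\beta^{\ast})$ by the change of variable $y:=x+\alpha$, one sees that the supremum is $+\infty$ unless $\beta^{\ast}\leq 0$, and otherwise decouples into $\phi^{\ast}(\alpha^{\ast})-\inf_{x}\{\langle\alpha^{\ast},x\rangle-\langle\beta^{\ast},\psi(x)\rangle\}$. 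Relabelling $\beta^{\ast}\leftarrow-\beta^{\ast}\in\mathbb{R}_{+}^{r}$ then recovers exactly the objective of \eqref{w-o-ineq-dual}, so that $h^{\ast\ast}(0,0)=\mathcal{V}\eqref{w-o-ineq-dual}$. Weak duality $\mathcal{V}\eqref{w-o-ineq-dual}\leq\mathcal{V}\eqref{w-o-ineq}$ follows immediately from the general inequality $h^{\ast\ast}\leq h$.

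The main obstacle is the reverse inequality, which reduces to $h(0,0)=h^{\ast\ast}(0,0)$, equivalently, by Remark \ref{eq.}(iii), to $h(0,0)=(\operatorname{cl}h)(0,0)$. For a proper convex function, this equality is secured whenever $(0,0)\in\operatorname{ri}(\operatorname{dom}h)$, since such a function is continuous on the relative interior of its effective domain. This is precisely where the generalized Slater condition does the real work: for each nonlinear index $i\in N$, the strict inequality $\psi_{i}(\bar{x})<0$ opens an open margin in the $\beta_{i}$-coordinate around zero; for each affine index $i\in L$, the weak inequality $\psi_{i}(\bar{x})\leq 0$ suffices because affine constraints only shape the affine hull of $\operatorname{dom}h$ without excluding relative interior points. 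Since $\phi$ is real-valued, the $\alpha$-coordinate is unconstrained; combining these observations yields $(0,0)\in\operatorname{ri}(\operatorname{dom}h)$, and the chain
\[
\mathcal{V}\eqref{w-o-ineq}=h(0,0)=h^{\ast\ast}(0,0)=\mathcal{V}\eqref{w-o-ineq-dual}
\]
closes the argument.
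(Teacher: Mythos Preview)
The paper does not supply its own proof of this theorem; it is quoted from Bo\c{t}--Wanka \cite{ref bot} and used only as background, so there is nothing in the paper to compare your argument against. Your perturbational set-up, the computation $h^{\ast}(\alpha^{\ast},\beta^{\ast})=\Phi^{\ast}(0,\alpha^{\ast},\beta^{\ast})$, and the identification $h^{\ast\ast}(0,0)=\mathcal{V}\eqref{w-o-ineq-dual}$ are all correct and standard.

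The gap is in the sentence ``affine constraints only shape the affine hull of $\operatorname{dom}h$ without excluding relative interior points.'' This is false. Take $n=1$, $r=2$, $\psi_{1}(x)=x$, $\psi_{2}(x)=-x$, both affine. Simultaneous feasibility $x\leq\beta_{1}$, $-x\leq\beta_{2}$ forces $\beta_{1}+\beta_{2}\geq 0$, so the $\beta$-section of $\operatorname{dom}h$ is the closed half-space $\{\beta_{1}+\beta_{2}\geq 0\}$, whose affine hull is all of $\mathbb{R}^{2}$; the origin sits on the relative boundary, not in the relative interior. The generalized Slater point is $\bar{x}=0$ with both constraints active, yet strong duality still holds here --- just not for the reason you give. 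The same failure occurs already with a single constant affine constraint $\psi_{1}\equiv 0$.

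What actually makes the affine case work is polyhedrality: the contribution of the affine constraints to the value function is a polyhedral convex function, and one then invokes a refinement of Fenchel duality (e.g.\ Rockafellar, \emph{Convex Analysis}, Theorems~20.1/28.2) in which relative-interior conditions may be dropped for the polyhedral pieces. Your argument as written proves the theorem only under the ordinary Slater condition (strict inequality for \emph{all} $i$); to cover the generalized version you must either invoke that polyhedral refinement explicitly or run a separating-hyperplane argument that treats the affine constraints separately. A minor additional point: ``strong duality'' in the statement includes dual attainment, which you do not address; it would follow from $\partial h(0,0)\neq\emptyset$ once the relative-interior (or polyhedral) argument is in place, but you should say so.
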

Note that in \cite{ref bot} the Slater constraint qualification is used as a sufficient condition to establish strong Fenchel-Lagrange duality.		
Unfortunately, due to the presence of $f(x, y)-\varphi(x)$ as a constraint, problem \eqref{llvf}does not satisfy the Slater constraint qualification. Moreover, problem \eqref{llvf}is not convex, even if the data are convex. Hence, Theorem \ref{f-l-convex} is not applicable for problem \eqref{llvf}. However, there is a chance that some of  these requirements can hold for the simple bilevel optimization problem (see, e.g., \cite{Shehu}), where the lower level problem is an unperturbed optimization problem.

\subsection{Convex optimization with inequality and equality constraints}
Next, we add some equality constraints to problem \eqref{w-o-ineq} to get the following primal optimization problem with equality constraints
\begin{equation}\label{w-o-eq}\tag{P$_{eq}$}
\min \{\phi \left( x\right) \ | \ \psi\left( x\right) \leq 0, \ \ \omega\left( x\right) =0\},
\end{equation}
where $\omega :\mathbb{R}^{n}\rightarrow\mathbb{R}^{q}$ is an affine-linear mapping and $\phi :\mathbb{R}^{n}\rightarrow\mathbb{R}$ and $\psi :\mathbb{R}^{n}\rightarrow\mathbb{R}^{r}$ are two functions such that the function $\left( \phi ,\psi \right) :\mathbb{R}^{n}\rightarrow\mathbb{R}\times\mathbb{R}^{r}$ defined by $\left( \phi ,\psi \right)\left( x\right) =\left( \phi\left( x\right)  ,\psi\left( x\right)  \right)$ is convex-like with respect to the cone $\mathbb{R}_{+}\times\mathbb{R}_{+}^{r}$; that is, the set $\left( \phi ,\psi \right)\left( \mathbb{R}^{n}\right) +\left(\mathbb{R}_{+}\times\mathbb{R}_{+}^{r}\right) $ is convex.
The Lagrange dual \eqref{w-o-eq-dual} of \eqref{w-o-eq} is defined as 
\begin{equation}\label{w-o-eq-dual}\tag{D$_{eq}$}
{\displaystyle \sup_{\beta^{\ast}\in\mathbb{R}_{+}^{r}, \ \gamma^{\ast}\in\mathbb{R}^{q} } } \ {\displaystyle \inf_{x\in\mathbb{R}^{n}} } \  \bigg\{ \phi \left(x\right) + \langle \beta^{\ast}, \psi\left( x\right) \rangle+ \langle \gamma^{\ast}, \omega\left( x\right) \rangle  \bigg\}.
\end{equation}
In \cite{refrev}, Bot et al. provide a strong duality theorem for \eqref{w-o-eq} and its Lagrange  dual problem by using some regularity conditions expressed by means of the contingent cone. More precisely, they work under the assumption that $\bar{x}$ is an optimal solution of the primal problem \eqref{w-o-eq}. To clarify their concept, let us introduce the following sets:
\begin{equation*}
\begin{array}{lcl}
\Sigma & = & \{x\in\mathbb{R}^{n} \ | \ \psi\left( x\right) \leq 0, \ \ \omega\left( x\right) =0 \}, \\
{\mathcal E} & = & \left( \phi\left( \bar{x}\right)-\phi\left( x\right) -\sigma , -\psi\left( x\right) -z,-\omega\left( x\right)\right) \ | \ x\in \mathbb{R}^{n}, \sigma \geq 0, \ z\in \mathbb{R}_{+}^{r} \}.
\end{array}
\end{equation*}
We arrive now to the strong duality theorem stated in the latter reference.
\begin{theorem}\label{f-l-convex-eq}
	If $\bar{x}$ is an optimal solution for \eqref{w-o-eq}, the following assertions are equivalent
	\begin{enumerate}
		\item The equality $T_{\mathcal E}\left(0,0_{\mathbb{R}^{r}},0_{\mathbb{R}^{q}} \right) \bigcap \left( \mathbb{R}_{+}^{\ast}\times 0_{\mathbb{R}^{r}}\times0_{\mathbb{R}^{q}}\right) =\emptyset$ holds;
		\item ${\mathcal V} \eqref{w-o-eq}={\mathcal V} \eqref{w-o-eq-dual}$ and there exists a point $(\bar{\beta}, \bar{\gamma}) \in \mathbb{R}_{+}^{r}\times \mathbb{R}^{q}$ that solves problem \eqref{w-o-eq-dual}.
		
		In this case, we have $\langle \bar{\beta} , \psi\left( \bar{x}\right) \rangle =0$.
	\end{enumerate}
\end{theorem}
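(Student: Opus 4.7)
The plan is to prove the equivalence (1) $\Leftrightarrow$ (2) by a separation argument applied to a suitable convex enlargement of $\mathcal{E}$, then read off the Lagrange multipliers from the supporting hyperplane. First I would record the basic observation that $(0,0_{\mathbb{R}^{r}},0_{\mathbb{R}^{q}}) \in \mathcal{E}$, since taking $x=\bar{x}$, $\sigma=0$, and $z=-\psi(\bar{x})\ge 0$ (feasibility of $\bar{x}$) produces the zero triple. Moreover, by optimality of $\bar{x}$, the set $\mathcal{E}$ itself cannot meet $\mathbb{R}_{+}^{\ast}\times\{0_{\mathbb{R}^{r}}\}\times\{0_{\mathbb{R}^{q}}\}$: any such point would yield some $x\in\Sigma$ with $\phi(x)<\phi(\bar{x})$. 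The tangent-cone condition (1) is the corresponding ``tangential'' strengthening needed to exclude asymptotic improving directions.

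For the direction (1)$\Rightarrow$(2), I would first show that $\mathcal{E}$ can be replaced by its convex hull (or its closure thereof) without changing the intersection with $\mathbb{R}_{+}^{\ast}\times\{0\}\times\{0\}$, using the convex-likeness hypothesis on $(\phi,\psi)$ with respect to $\mathbb{R}_{+}\times\mathbb{R}_{+}^{r}$ together with the affinity of $\omega$; this is exactly what the convex-like assumption is designed for. Then condition (1) gives the existence of a separating functional: there exist $(\lambda,\bar{\beta},\bar{\gamma})\in\mathbb{R}_{+}\times\mathbb{R}^{r}\times\mathbb{R}^{q}$, not all zero, such that
\begin{equation*}
\lambda u+\langle\bar{\beta},v\rangle+\langle\bar{\gamma},w\rangle\le 0\qquad\forall\,(u,v,w)\in\mathcal{E},
\end{equation*}
while $\lambda t>0$ for every $t>0$, which forces $\lambda>0$. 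Normalising $\lambda=1$ and unpacking the definition of $\mathcal{E}$ (varying $\sigma\ge 0$ and $z\ge 0$ forces $\bar{\beta}\in\mathbb{R}_{+}^{r}$ by unboundedness in those slots), the inequality rewrites as
\begin{equation*}
\phi(x)+\langle\bar{\beta},\psi(x)\rangle+\langle\bar{\gamma},\omega(x)\rangle\ge\phi(\bar{x})\qquad\forall\,x\in\mathbb{R}^{n},
\end{equation*}
which says $\mathcal{V}\eqref{w-o-eq-dual}\ge\phi(\bar{x})=\mathcal{V}\eqref{w-o-eq}$. Combined with weak duality this is strong duality, and $(\bar{\beta},\bar{\gamma})$ solves \eqref{w-o-eq-dual}. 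Complementary slackness $\langle\bar{\beta},\psi(\bar{x})\rangle=0$ then drops out by plugging $x=\bar{x}$ into the Lagrangian, using $\omega(\bar{x})=0$, $\bar{\beta}\ge 0$, $\psi(\bar{x})\le 0$, and the already established equality of optimal values.

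For (2)$\Rightarrow$(1), I would argue by contradiction: suppose some $(t,0_{\mathbb{R}^{r}},0_{\mathbb{R}^{q}})$ with $t>0$ lies in $T_{\mathcal{E}}(0,0,0)$. Then there exist $t_{k}\downarrow 0$ and sequences $(x_{k},\sigma_{k},z_{k})$ with $\sigma_{k}\ge 0$, $z_{k}\ge 0$, such that
\begin{equation*}
\tfrac{1}{t_{k}}(\phi(\bar{x})-\phi(x_{k})-\sigma_{k},\,-\psi(x_{k})-z_{k},\,-\omega(x_{k}))\longrightarrow(t,0,0).
\end{equation*}
Evaluating the Lagrangian along $x_{k}$ and using $\bar{\beta}\ge 0$, $z_{k}\ge 0$, one has
\begin{equation*}
\phi(x_{k})+\langle\bar{\beta},\psi(x_{k})\rangle+\langle\bar{\gamma},\omega(x_{k})\rangle\le\phi(\bar{x})-\sigma_{k}-t_{k}(t+o(1)),
\end{equation*}
after substituting the componentwise limits; dividing by $t_{k}$ and passing to the limit produces a value strictly below $\mathcal{V}\eqref{w-o-eq-dual}=\phi(\bar{x})$, contradicting the optimality of $(\bar{\beta},\bar{\gamma})$.

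The main obstacle I anticipate is the first step of the forward direction: converting the tangent-cone separation on $\mathcal{E}$ (which is not a priori convex) into a genuine Lagrangian inequality. This is where the convex-likeness of $(\phi,\psi)$ and the affinity of $\omega$ must be used carefully to ensure that one can separate $T_{\mathcal{E}}(0,0,0)$ from the open ray $\mathbb{R}_{+}^{\ast}\times\{0\}\times\{0\}$ by a closed half-space whose normal has a strictly positive first coordinate; guaranteeing $\lambda>0$ (i.e., ruling out the degenerate Fritz John multiplier) is the delicate point, and it is precisely here that condition (1) does its work.
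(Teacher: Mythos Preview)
The paper does not prove this theorem; it is quoted from Bo\c{t}--Csetnek--Moldovan \cite{refrev} as background in Section~3, so there is no in-paper argument to compare your attempt against. I therefore comment only on the soundness of your sketch.

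Your direction (2)$\Rightarrow$(1) is essentially correct. The phrase ``dividing by $t_{k}$'' is superfluous: from the three componentwise relations you already obtain, for large $k$,
\[
\phi(x_{k})+\langle\bar\beta,\psi(x_{k})\rangle+\langle\bar\gamma,\omega(x_{k})\rangle
\le \phi(\bar x)-\sigma_{k}-\langle\bar\beta,z_{k}\rangle-t_{k}\bigl(u_{k}+\langle\bar\beta,v_{k}\rangle+\langle\bar\gamma,w_{k}\rangle\bigr)<\phi(\bar x),
\]
and this already contradicts $\inf_{x}L(x,\bar\beta,\bar\gamma)=\mathcal V\eqref{w-o-eq-dual}=\phi(\bar x)$.

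For (1)$\Rightarrow$(2) your separation mechanics are fine \emph{once} $T_{\mathcal E}(0)$ is known to be a closed \emph{convex} cone: in finite dimensions one strictly separates the point $e_{1}=(1,0_{\mathbb R^{r}},0_{\mathbb R^{q}})\notin T_{\mathcal E}(0)$ from that cone, and since the cone contains $0$ the separating functional automatically has $\lambda=\ell(e_{1})>0$; this disposes of the ``degenerate Fritz--John'' worry you raise. The genuine gap lies earlier, in the convexity of $\mathcal E$ (equivalently of $T_{\mathcal E}(0)$). Writing
\[
\mathcal E=(\phi(\bar x),0,0)-\Bigl[(\phi,\psi,\omega)(\mathbb R^{n})+\mathbb R_{+}\times\mathbb R_{+}^{r}\times\{0_{\mathbb R^{q}}\}\Bigr],
\]
convexity of $\mathcal E$ amounts to convexity of $(\phi,\psi,\omega)(\mathbb R^{n})+\mathbb R_{+}\times\mathbb R_{+}^{r}\times\{0\}$. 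Convex-likeness of $(\phi,\psi)$ supplies, for a convex combination of $(\phi(x_{1}),\psi(x_{1}))$ and $(\phi(x_{2}),\psi(x_{2}))$, some representing point $x_{3}$, but there is no reason this $x_{3}$ should satisfy $\omega(x_{3})=t\omega(x_{1})+(1-t)\omega(x_{2})$; affinity of $\omega$ gives that identity only for $x_{3}=tx_{1}+(1-t)x_{2}$, which convex-likeness does not provide. Your proposed workaround of passing to $\operatorname{co}\mathcal E$ does not help either: condition~(1) is formulated for $T_{\mathcal E}(0)$, and in general $T_{\mathcal E}(0)\subsetneq T_{\operatorname{co}\mathcal E}(0)$, so the hypothesis does not transfer. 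In \cite{refrev} this issue is handled within the quasi-relative-interior framework and with hypotheses arranged so that the relevant image set is genuinely convex; your sketch would need either to import that machinery or to strengthen the convex-likeness assumption to the triple $(\phi,\psi,\omega)$ before the separation step goes through.
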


We now  analyze the applicability of this theorem to our bilevel program \eqref{calm0}. To proceed, we consider the classical optimistic bilevel programming problem	with linear lower level problem
\begin{equation}\label{GoodEx}
\left\{\begin{array}{ll}
{\displaystyle \min_{x, y}} \ F(x, y) \\
	G_{i}(x,y) \leq 0 \quad  \forall i \in I_{k}=\{1,\dots,k\}, \\
    y\in S\left( x\right)=\arg\underset{y}{\min}~\left\{C^{T}y\,|\;\; Ax+By\leq d\right\},
\end{array}\right.
\end{equation}
where $d\in\mathbb{R}^{r}$, \ $C \in\mathbb{R}^{m}$, \ $A \in\mathbb{R}^{r}\times\mathbb{R}^{n}$ and $B \in\mathbb{R}^{r}\times\mathbb{R}^{m}$. Its lower level value function reformulation can be written as
\begin{equation*}
{\displaystyle \min_{x, y}} \ F(x, y) \ : \ G_{i}(x,y) \leq 0 \quad  \forall i \in I_{k}=\{1,\dots,k\}, \ Ax+By\leq d, \ C^{T}y-\varphi\left( x\right) = 0,
\end{equation*}
where the lower-level value function $\varphi$ is given by
\begin{equation*}
\varphi\left( x\right) ={\displaystyle \min_{y}} \ \{ C^{T}y\,| \;\; Ax+By\leq d\}.	
\end{equation*}
In general, even if the lower level problem is fully affine linear (i.e., in $(x,y)$), as it is the case here, the optimal value function $\varphi$ is not linear. This function is typically only be piecewise linear. Consequently, Theorem \ref{f-l-convex-eq} will rarely be applicable to a problem of the form \eqref{GoodEx}.

\subsection{Nonconvex optimization with inequality and equality constraints}
Now, we do not assume that \eqref{w-o-eq} has a solution, also no convexity assumption is imposed. Let us consider the Lagrangian
\begin{equation*}
L\left(\varsigma ,\beta ,\gamma , x\right)=\varsigma \phi\left( x\right)+\langle \beta ,\psi\left( x\right) \rangle +\langle \gamma ,\omega\left( x\right) \rangle.
\end{equation*}
It can be seen that for all $\varsigma \in\mathbb{R}_{+}$, $\beta\in\mathbb{R}_{+}^{r}$ and $\gamma\in\mathbb{R}^{q}$, it holds that
\begin{equation*}
	{\displaystyle \inf_{x\in\mathbb{R}^{n}}} \ L\left(\varsigma ,\beta ,\gamma , x\right)\leq  \varsigma  {\mathcal V}\left( P_{eq}\right), \ \ \ \forall \varsigma \geq 0.
\end{equation*}
Consequently, weak duality is satisfied in this case.

In \cite{fabin-12}, a strong Lagrange duality theorem for the nonconvex optimization problem \eqref{w-o-eq} and its dual is established under a suitable generalized Slater assumption. Based on this reference, it is said that the generalized Slater condition holds if
\begin{equation}\label{cq-w-p-casnorm}
\overline{cone} \ \bigg( \left(\psi ,\omega\right) \left( \mathbb{R}^{n}\right) + \mathbb{R}^{r}_{+}\times \{0_{\mathbb{R}^{q}}\}\bigg)= \mathbb{R}^{r}\times\mathbb{R}^{q}.
\end{equation}
To proceed we need the following construction
\begin{equation*}
	\Psi \left( \mathbb{R}^{n}\right)=\bigg \{\left(\phi\left( x\right) ,\psi\left( x\right) ,\omega\left( x\right)  \right)\in \mathbb{R}\times\mathbb{R}^{r}\times\mathbb{R}^{q} \ | \ x\in \mathbb{R}^{n} \bigg \}.
\end{equation*}
By assuming that a genaralized Slater condition holds together with an additional condition, the authors prove the following strong duality result.
\begin{theorem}\label{fin-nonconvex}
	Consider problems \eqref{w-o-eq} and \eqref{w-o-eq-dual} and suppose that
	\begin{itemize}
		\item ${\mathcal V} \eqref{w-o-eq}$ is finite,
		\item $int \bigg ( \text{co} \left( \Psi \left( \mathbb{R}^{n}\right) \right)+ \left( \mathbb{R}_{+}\times\mathbb{R}_{+}^{r}\times\mathbb{R}^{q}\right) \bigg) \neq\emptyset$,
		\item the generalized Slater condition $\left( \ref{cq-w-p-casnorm}\right)$ holds.
	\end{itemize}
Furthermore, let one of the following assumption hold:
 \begin{description}
 	\item[$A\left( 1\right) $] $\text{cone} \ \bigg ( int \bigg [ \text{co} \left( \Psi \left( \mathbb{R}^{n}\right) \right)-{\mathcal V} \eqref{w-o-eq} \left( 1,0_{\mathbb{R}_{+}\times\mathbb{R}_{+}^{r}\times\mathbb{R}^{q}}\right)  + \left( \mathbb{R}_{+}\times\mathbb{R}_{+}^{r}\times\mathbb{R}^{q}\right) \bigg] \bigg)$ is pointed,
 	\item[$A\left( 2\right) $]
 	$\left( 0,0\right) \notin int \bigg [ \text{co} \left( \Psi \left( \mathbb{R}^{n}\right) \right)-{\mathcal V} \eqref{w-o-eq} \left( 1,0_{\mathbb{R}_{+}\times\mathbb{R}_{+}^{r}\times\mathbb{R}^{q}}\right)  + \left( \mathbb{R}_{+}\times\mathbb{R}_{+}^{r}\times\mathbb{R}^{q}\right) \bigg]$.
 \end{description}
Then, $\mathcal{V} \eqref{w-o-eq}=\mathcal{V} \eqref{w-o-eq-dual}$ and there exists $\left(\bar{\beta},\bar{\gamma}\right)\in \mathbb{R}^{r}_{+}\times\mathbb{R}^{q}$ such that
	\begin{equation*}
		{\mathcal V} \eqref{w-o-eq}=	{\displaystyle \inf_{x \in\mathbb{R}^{n}}} \ L\left(1,\bar{\beta} ,\bar{\gamma} ,x\right).
	\end{equation*}
\end{theorem}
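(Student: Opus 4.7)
The plan is to obtain the dual multipliers by a convex separation argument applied to an augmented image set of the constraint/objective map, then normalize the scalar multiplier using the generalized Slater condition. Set $v:={\mathcal V}\eqref{w-o-eq}$, which is finite by hypothesis, and introduce the convex set
\[
C := \mathrm{co}\bigl(\Psi(\mathbb{R}^{n})\bigr)-v\,(1,0_{\mathbb{R}^{r}},0_{\mathbb{R}^{q}})+\bigl(\mathbb{R}_{+}\times\mathbb{R}_{+}^{r}\times\mathbb{R}^{q}\bigr).
\]
It is a convex set plus a convex cone, hence convex, and it has nonempty interior by the second bullet of the hypotheses. The first task is to verify that the origin does not lie in $\mathrm{int}(C)$. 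Under $A(2)$ this is the very statement. Under $A(1)$ it is derived by contradiction: if $0\in\mathrm{int}(C)$, then $C$ contains a full ball around the origin, so $\mathrm{cone}(\mathrm{int}(C))$ equals $\mathbb{R}\times\mathbb{R}^{r}\times\mathbb{R}^{q}$, which is never pointed. In either case, $A(1)$ and $A(2)$ collapse to the same workable conclusion.

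With $(0,0_{\mathbb{R}^{r}},0_{\mathbb{R}^{q}})\notin\mathrm{int}(C)$, I apply a standard convex separation (Hahn--Banach) theorem to produce a nonzero triple $(\bar\varsigma,\bar\beta,\bar\gamma)\in\mathbb{R}\times\mathbb{R}^{r}\times\mathbb{R}^{q}$ such that
\[
\bar\varsigma\,u+\langle\bar\beta,s\rangle+\langle\bar\gamma,t\rangle\ \ge\ 0\qquad\forall\,(u,s,t)\in C.
\]
Plugging in the recession directions of the cone $\mathbb{R}_{+}\times\mathbb{R}_{+}^{r}\times\mathbb{R}^{q}$ (letting only one component go to infinity at a time) forces $\bar\varsigma\ge 0$ and $\bar\beta\in\mathbb{R}_{+}^{r}$, which are the correct dual sign conditions appearing in \eqref{w-o-eq-dual}. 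The delicate step --- and the one I expect to be the main obstacle --- is ruling out the degenerate case $\bar\varsigma=0$. Should $\bar\varsigma$ vanish, the separating inequality would reduce, after letting the perturbation slacks absorb the image of $(\psi,\omega)$, to
\[
\langle\bar\beta,\psi(x)+z\rangle+\langle\bar\gamma,\omega(x)\rangle\ \ge\ 0\qquad\forall x\in\mathbb{R}^{n},\ z\in\mathbb{R}_{+}^{r},
\]
i.e.\ the pair $(\bar\beta,\bar\gamma)$ would annihilate every element of $(\psi,\omega)(\mathbb{R}^{n})+\mathbb{R}_{+}^{r}\times\{0_{\mathbb{R}^{q}}\}$, hence every element of its closed conic hull. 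But $(\ref{cq-w-p-casnorm})$ identifies that closed conic hull with the whole of $\mathbb{R}^{r}\times\mathbb{R}^{q}$, forcing $(\bar\beta,\bar\gamma)=(0,0)$ and contradicting $(\bar\varsigma,\bar\beta,\bar\gamma)\neq 0$. This is precisely the role of the generalized Slater condition, and making this bookkeeping rigorous requires some care in passing from the separating inequality to a statement about the closed cone.

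Having established $\bar\varsigma>0$, I rescale so that $\bar\varsigma=1$. Specialising the separation inequality to points of the form $(\phi(x),\psi(x),\omega(x))-v\,(1,0,0)\in C$ yields
\[
\phi(x)+\langle\bar\beta,\psi(x)\rangle+\langle\bar\gamma,\omega(x)\rangle\ \ge\ v\qquad\forall x\in\mathbb{R}^{n},
\]
that is, $\inf_{x\in\mathbb{R}^{n}} L(1,\bar\beta,\bar\gamma,x)\ge v$. The weak duality inequality recorded just before the theorem gives the reverse estimate, so we obtain ${\mathcal V}\eqref{w-o-eq}={\mathcal V}\eqref{w-o-eq-dual}$ with $(\bar\beta,\bar\gamma)$ attaining the dual supremum, which is exactly the statement to prove.
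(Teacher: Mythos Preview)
The paper does not supply its own proof of this theorem; it is quoted from the reference \cite{fabin-12} (Flores-Baz\'an et al.) as background, and immediately afterwards the authors only discuss why it fails to apply to the bilevel problem. Your argument---separate the origin from $\mathrm{int}(C)$, read off the sign conditions from the recession cone, and use the generalized Slater condition \eqref{cq-w-p-casnorm} to exclude $\bar\varsigma=0$ before normalizing---is exactly the standard route for this type of result and is essentially the proof one finds in that reference, so there is nothing to compare against within the present paper.
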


Next, we do not assume that \eqref{calm0} has a solution, also no convexity assumption is imposed. Let us consider the Lagrangian
\begin{equation*}
L\left(\varsigma ,\alpha ,\beta ,\gamma , x,y \right)=\varsigma F(x,y)+\gamma(f(x, y)-\varphi(x)))+\sum_{i=1}^{k}\alpha_{i} G_{i}(x,y)+\sum_{j=1}^{p}\beta_{j} g_{j}(x,y) .
\end{equation*}
It can be seen that for all $\gamma\in\mathbb{R}$, $\alpha\in\mathbb{R}_{+}^{k}$ and $\beta\in\mathbb{R}_{+}^{p}$
\begin{equation*}
	{\displaystyle \inf_{\left( x,y\right) \in\mathbb{R}^{n}\times\mathbb{R}^{m}}} \ L\left(\varsigma ,\alpha ,\beta ,\gamma , x,y \right)\leq  \varsigma  {\mathcal V}\left( {\mathcal P}\right), \ \ \ \forall \varsigma \geq 0.
\end{equation*}
Consequently, the weak duality holds. To provide the strong duality, we need that the following Slater-type condition holds
\begin{equation}\label{cq-w-p-cLLVF}
\overline{cone} \ \bigg( \left( f-\varphi,G,g\right) \left( \mathbb{R}^{n}\times\mathbb{R}^{m}\right)  + \{0_{\mathbb{R}}\}\times\mathbb{R}_{+}^{k+p}\bigg)= \mathbb{R}\times\mathbb{R}^{k+p}.
\end{equation}
Unfortunately, this is not the case, as  any vector $\left( a,0_{\mathbb{R}^{k}},0_{\mathbb{R}^{p}}\right) $ with $a < 0$ does not belong to the set in the left-hand-side of equation $\left( \ref{cq-w-p-cLLVF}\right)$. Hence, Theorem \ref{fin-nonconvex} in not applicable for \eqref{calm0} problem.

 %%%%%%%%%%%%%%%%%%%%%%%%%%%%%%%%%%%%%%%%%%%%%%%%%%%%%%%%%%%%%%%%%%%%%%%%%%%%%%%%%%%%%%%%%%%%%%%%%%%%%%%%%%%%%%%%%%%%%%%%%%%%%%%%%%%%%%%%%%%%
 \section{Duality under partial calmness}
To deal with the fact that most standard constraint qualifications do not hold for  problem \eqref{calm0}, we are going to use the partial calmness concept in this section. To proceed, we use the reformulation \eqref{llvf}. Here, when $\mathcal{S}\left( x\right) =\emptyset$ for some $x\in \mathbb{R}^{n}$, we use the convention $\inf \emptyset = \infty$.

In general, the value function $\varphi$ is not lower semicontinuous, even when the involved functions satisfy this property. To ensure the validity of the lower semicontinuity of $\varphi$, we will impose some additional conditions on the set-valued map $K$; see \cite[Lemma 3.2]{GauvinDubeau1982} for relevant details. To proceed here, we consider the perturbed version of problem \eqref{llvf} linearly parameterized by $t\in \mathbb{R}$:
\begin{equation}\label{calm1}\tag{P[$t$]}
\left\{
\begin{array}{l}
{\displaystyle \min_{x,y}} \ \ F\left( x,y\right)\\
f\left( x,y\right)-\varphi \left( x\right)+t=0,\\
G_{i}\left( x,y\right)\leq 0 \ \ \text{for all} \ i\in I_{k}=\{1,\cdots ,k\}, \\
g_{j}\left( x,y\right)\leq 0 \ \ \text{for all} \ j\in I_{p}=\{1,\cdots ,p\}.
\end{array}
\right.
\end{equation}
\begin{definition}\cite{ye}
	we say that the unperturbed problem \eqref{llvf} is partially calm
	at its feasible solution $\left( \bar{x},\bar{y}\right)$ if there is a constant $\lambda > 0$ and a neighborhood $U$ of
	the triple $\left( \bar{x},\bar{y},0\right)\in {\mathbb{R}}^{n}\times {\mathbb{R}}^{m}\times \mathbb{R}$ such that
	\begin{equation}\label{calm1100}
	F\left( x,y\right)-F\left( \bar{x},\bar{y}\right)+\lambda \mid t\mid \geq 0 \ \ \ \text{for all} \ \ \left( x,y,t\right)\in U           \ \ \text{feasible to} \ \eqref{calm1}.
	\end{equation}
\end{definition}
In the next lemma, we show that the partial calmness to the bilevel program \eqref{llvf} allows us an exact penalization of these problem. In fact, the upper-level objective in our problem may be neither Lipschitz nor continuous, conditions that are usually imposed in existing results; see, e.g., \cite{ye}. However, we formulate another proof here when the upper-level objective function is only lower semi-continuous.
\begin{lemma}\label{calm2}
	Let $\left( \bar{x},\bar{y}\right)$ be a partially calm local optimal solution to the bilevel program \eqref{llvf}, and let the
	upper-level objective function $F$ be lower semi-continuous at this point. Then $\left( \bar{x},\bar{y}\right)$ is a local optimal solution to the penalized problem
	\begin{equation}\label{calm3}\tag{P$^{\lambda}$}
	\left\{
	\begin{array}{l}
	{\displaystyle \min_{x,y}} \ \ \lambda^{-1}F\left( x,y\right)+ f\left( x,y\right)-\varphi\left( x\right) \\
	G_{i}\left( x,y\right)\leq 0 \ \ \text{for all} \ i\in I_{k}=\{1,\cdots ,k\}, \\
	g_{j}\left( x,y\right)\leq 0 \ \ \text{for all} \ j\in I_{p}=\{1,\cdots ,p\},
	\end{array}
	\right.
	\end{equation}
	where $\lambda > 0$ is the constant from the partial calmness condition $\left( \ref{calm1100}\right)$.
\end{lemma}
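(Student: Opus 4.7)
My strategy is a contradiction argument in the spirit of the standard exact-penalization result under partial calmness, but adapted so that only lower semi-continuity of $F$ is needed (rather than local Lipschitz continuity, as in \cite{ye}). Suppose that $(\bar{x},\bar{y})$ fails to be a local minimizer of \eqref{calm3}. Then there is a sequence $\{(x_k,y_k)\}$ converging to $(\bar{x},\bar{y})$ with each $(x_k,y_k)$ feasible for \eqref{calm3} (i.e.\ $G_i(x_k,y_k)\le 0$ for $i\in I_k$ and $g_j(x_k,y_k)\le 0$ for $j\in I_p$) and
$$
\lambda^{-1}F(x_k,y_k)+f(x_k,y_k)-\varphi(x_k)<\lambda^{-1}F(\bar{x},\bar{y}),
$$
where I have already used that $(\bar{x},\bar{y})$ is feasible for \eqref{llvf}, so $f(\bar{x},\bar{y})-\varphi(\bar{x})=0$.

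Next I would introduce the natural perturbation parameter $t_k:=\varphi(x_k)-f(x_k,y_k)$. Since $y_k\in K(x_k)$ by lower-level feasibility, one has $f(x_k,y_k)\ge\varphi(x_k)$, so $t_k\le 0$ and $|t_k|=f(x_k,y_k)-\varphi(x_k)$. By construction $(x_k,y_k,t_k)$ satisfies $f(x_k,y_k)-\varphi(x_k)+t_k=0$ together with the upper- and lower-level inequality constraints, hence is feasible for the perturbed problem \eqref{calm1}. Rewriting the strict inequality above gives
$$
F(x_k,y_k)-F(\bar{x},\bar{y})+\lambda|t_k|<0. \qquad (\star)
$$

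The crucial step, where lower semi-continuity of $F$ enters, is to show $t_k\to 0$ so that $(x_k,y_k,t_k)$ eventually lies in the neighborhood $U$ from the partial calmness hypothesis. From $(\star)$ one gets $0\le|t_k|<\lambda^{-1}\bigl(F(\bar{x},\bar{y})-F(x_k,y_k)\bigr)$; the lsc of $F$ at $(\bar{x},\bar{y})$ gives $\liminf_k F(x_k,y_k)\ge F(\bar{x},\bar{y})$, so $\limsup_k|t_k|\le 0$, and combined with $|t_k|\ge 0$ this forces $|t_k|\to 0$. Hence $(x_k,y_k,t_k)\to(\bar{x},\bar{y},0)$.

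Finally, for all sufficiently large $k$ we have $(x_k,y_k,t_k)\in U$ and feasible for \eqref{calm1}, so the partial calmness inequality \eqref{calm1100} yields $F(x_k,y_k)-F(\bar{x},\bar{y})+\lambda|t_k|\ge 0$, directly contradicting $(\star)$. The only subtle point I foresee is precisely the lsc step used to derive $t_k\to 0$; once that is established, the rest is routine bookkeeping from the definitions of the perturbed and penalized problems.
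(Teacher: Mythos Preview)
Your argument is correct and follows the same contradiction strategy as the paper: assume a sequence $(x_k,y_k)\to(\bar x,\bar y)$ violating local optimality of \eqref{calm3}, introduce the perturbation $t_k$ so that $(x_k,y_k,t_k)$ is feasible for \eqref{calm1}, and invoke partial calmness to contradict $(\star)$. The only difference is in how lower semi-continuity is deployed. The paper fixes a box neighborhood $V=\bigl[(\bar x,\bar y)+\eta\,\mathbb B\bigr]\times(-\delta,\delta)\subset U$, uses lsc in $\varepsilon$--$\delta$ form to obtain $F(x,y)-F(\bar x,\bar y)\ge -\lambda\delta$ on the spatial part, and then performs a case split according to whether $t_k\in(-\delta,\delta)$ or $t_k\ge\delta$; in the second case the lsc bound itself yields $F(x_k,y_k)-F(\bar x,\bar y)+\lambda t_k\ge 0$. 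You instead apply lsc in its sequential form directly to the bound $0\le|t_k|<\lambda^{-1}\bigl(F(\bar x,\bar y)-F(x_k,y_k)\bigr)$ to force $t_k\to 0$, which places $(x_k,y_k,t_k)$ in $U$ for large $k$ without any case distinction. Your route is slightly more streamlined; the paper's version has the minor advantage of making explicit which neighborhood of $(\bar x,\bar y)$ works, but both arguments are equivalent in substance.
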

\begin{proof}
	Suppose that $\left( \bar{x},\bar{y}\right)$ is a partially calm
	local optimal solution to the bilevel program  \eqref{llvf} whereas  $\left( \bar{x},\bar{y}\right)$ is not a local solution of \eqref{calm3}. First, we can find a parameter $\lambda > 0$ and a neighborhood $U$ of
	the triple $\left( \bar{x},\bar{y},0\right)\in {\mathbb{R}}^{n}\times {\mathbb{R}}^{m}\times \mathbb{R}$ such that
	\begin{equation}\label{sci1}
	F\left( x,y\right)-F\left( \bar{x},\bar{y}\right)+\lambda \mid t\mid \geq 0 \ \ \ \text{for all} \ \ \left( x,y,t\right)\in U           \ \ \text{feasible to} \eqref{calm1}.
	\end{equation}
	Secondly, there is a  sequence $\left( x_{k},y_{k}\right)\in \mathbb{R}^{n}\times\mathbb{R}^{m}$ feasible to \eqref{calm3} and  satisfying  $\left( x_{k},y_{k}\right)\rightarrow  \left( \bar{x},\bar{y}\right)$ such that
	\begin{equation}\label{sci2}
	F\left( x_{k},y_{k}\right)+ \lambda \left( f\left( x_{k},y_{k}\right)-\varphi\left( x_{k}\right) \right) < F\left( \bar{x},\bar{y}\right)
	\end{equation}
	for all $k\in \mathbb{N}$. It follows from the lower semi-continuity of $F$ at $\left( \bar{x},\bar{y}\right)$ that
	there is $\delta > 0$ and $\eta > 0$ such that $V=\left[ \left( \bar{x},\bar{y}\right)+ \eta \mathbb{B}_{\mathbb{R}^{n}\times \mathbb{R}^{m}}\right] \times \left(  -\delta , \delta \right)  \subset U$ and that the inequality
	\begin{equation}\label{sci3}
	-\lambda \delta \leq F\left( x,y\right)- F\left( \bar{x},\bar{y}\right)
	\end{equation}
	holds true for all $\left( x,y\right)\in \left( \bar{x},\bar{y}\right)+ \eta \mathbb{B}_{\mathbb{R}^{n}\times \mathbb{R}^{m}}$. Setting $t_{k}=f\left( x_{k},y_{k}\right)-\varphi\left( x_{k}\right)$, two cases have to be considered.\\
	\hspace*{.5cm}Case I. Let $\left( \left( x_{k},y_{k}\right), t_{k}\right) \in V$; then combining
	$\left( \ref{sci1}\right)$ and $\left( \ref{sci2}\right)$, it follows that
	\begin{equation*}
		0\leq F\left( x_{k},y_{k}\right)-F\left( \bar{x},\bar{y}\right)+ \lambda \left( f\left( x_{k},y_{k}\right)-\varphi\left( x_{k}\right) \right) < 0.
	\end{equation*}
	However, this is a contradiction.\\
	\hspace*{.5cm}Case II. Assume that $\left( \left( x_{k},y_{k}\right), t_{k}\right) \notin V$. On the one side, since for $k$ large enough $\left( x_{k},y_{k}\right)\in \left( \bar{x},\bar{y}\right)+ \eta \mathbb{B}_{\mathbb{R}^{n}\times \mathbb{R}^{m}}$, one has $t_{k}\notin \left(  -\delta , \delta \right)$. On the other side, by definition of optimal value function, we have $t_{k}\geq 0$. Consequently, $t_{k}\geq \delta$. Hence, by $\left( \ref{sci3}\right)$,
	\begin{equation}
	-\lambda \left( f\left( x_{k},y_{k}\right)-\varphi\left( x_{k}\right)\right)  \leq F\left( x,y\right)- F\left( \bar{x},\bar{y}\right).
	\end{equation}
	Combining the latter with $\left( \ref{sci2}\right)$, one obtains
	\begin{equation*}
		0\leq F\left( x_{k},y_{k}\right)-F\left( \bar{x},\bar{y}\right)+ \lambda \left( f\left( x_{k},y_{k}\right)-\varphi\left( x_{k}\right) \right) < 0.
	\end{equation*}
	However, this inequality is a contradiction. Since both cases lead to a contradiction, we conclude that $\left( \bar{x},\bar{y}\right)$ is a local optimal solution to the penalized problem \eqref{calm3}.
\end{proof}

If further assumptions are made about the marginal map $\varphi$ and the lower level objective function, the converse result may be obtained.
\begin{lemma}
	Let $\left( \bar{x},\bar{y}\right)$ with $\bar{y}\in {\mathcal S}\left( \bar{x}\right)$ be a local optimal solution of the penalized problem \eqref{calm3} for some $\lambda >0$. Assume that $f$ and $\varphi$ are locally Lipschitz near $\left( \bar{x},\bar{y}\right)$ with Lipschitz modulus $k_{f}$ and $k_{\varphi}$, respectively. Then, $\left( \bar{x},\bar{y}\right)$ is a partially calm local optimal solution to the bilevel optimization problem \eqref{llvf}.
\end{lemma}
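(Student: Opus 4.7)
The plan is to derive the partial calmness estimate \eqref{calm1100} directly from the local minimality in \eqref{calm3}, exploiting two observations: first, $\bar y \in \mathcal{S}(\bar x)$ yields $f(\bar x,\bar y)=\varphi(\bar x)$; second, the perturbation parameter $t$ in \eqref{calm1} is automatically nonpositive on the feasible set, so $|t|$ coincides with the exact penalty $f(x,y)-\varphi(x)$ that already appears in \eqref{calm3}. Once these two observations are in hand, the partial calmness constant is forced to be the very same $\lambda$ provided by the penalized problem.

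First, I would extract, from the assumed local optimality of $(\bar x,\bar y)$ in \eqref{calm3}, a radius $\eta>0$ such that for every $(x,y)\in(\bar x,\bar y)+\eta\,\mathbb{B}_{\mathbb{R}^n\times\mathbb{R}^m}$ satisfying $G_i(x,y)\leq 0$ and $g_j(x,y)\leq 0$,
\begin{equation*}
\lambda^{-1}F(x,y)+f(x,y)-\varphi(x)\;\geq\;\lambda^{-1}F(\bar x,\bar y),
\end{equation*}
where the identity $f(\bar x,\bar y)-\varphi(\bar x)=0$ has been used to eliminate the value at $(\bar x,\bar y)$. I would then pick a smaller radius $r>0$ with $(1+k_f+k_\varphi)r\leq\eta$ and set $U:=B\!\left((\bar x,\bar y,0),r\right)$. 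This is precisely the step in which the local Lipschitz continuity of $f$ and $\varphi$ enters: any $(x,y,t)\in U$ feasible to \eqref{calm1} satisfies
\begin{equation*}
|t|=|\varphi(x)-f(x,y)|\leq(k_\varphi+k_f)\,\|(x,y)-(\bar x,\bar y)\|,
\end{equation*}
so restricting attention to $U$ really controls all three components simultaneously, and not merely a slab in $(x,y)$.

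Next, for $(x,y,t)\in U$ feasible to \eqref{calm1}, the constraint $g_j(x,y)\leq 0$ gives $y\in K(x)$ and hence $f(x,y)\geq\varphi(x)$, so the equality $f(x,y)-\varphi(x)+t=0$ forces $t\leq 0$ and $|t|=f(x,y)-\varphi(x)$. Substituting this identity into the local minimality inequality above and multiplying through by $\lambda>0$ yields
\begin{equation*}
F(x,y)-F(\bar x,\bar y)+\lambda|t|\;\geq\;0,
\end{equation*}
which is exactly \eqref{calm1100} with partial calmness constant $\lambda$; local optimality of $(\bar x,\bar y)$ for \eqref{llvf} itself follows as the $t=0$ case, since any $(x,y)$ feasible to \eqref{llvf} in the ball has $f(x,y)-\varphi(x)=0$ by the same sandwich argument. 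The main obstacle is bookkeeping rather than analysis: identifying $|t|$ with the exact penalty term, and verifying that $U$ is a genuine neighborhood of $(\bar x,\bar y,0)$ in $\mathbb{R}^n\times\mathbb{R}^m\times\mathbb{R}$ rather than a slab, which is the only place where the Lipschitz hypothesis is genuinely needed.
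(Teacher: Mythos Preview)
Your proof is correct and follows essentially the same route as the paper: extract the local optimality inequality for \eqref{calm3}, use the Lipschitz hypotheses on $f$ and $\varphi$ to manufacture a product neighborhood of $(\bar x,\bar y,0)$, and identify $|t|$ with $f(x,y)-\varphi(x)$ via the feasibility constraint $f(x,y)-\varphi(x)+t=0$ together with $y\in K(x)$. The paper takes the neighborhood as a product $\big((\bar x,\bar y)+\eta\,\mathbb{B}\big)\times(-\delta,\delta)$ rather than a ball, and does not spell out the sign of $t$ or the $t=0$ case for local optimality of \eqref{llvf}, but these are cosmetic differences; your scaling factor $(1+k_f+k_\varphi)$ in the choice of $r$ is harmless though in fact $r\leq\eta$ already suffices.
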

\begin{proof}
	Suppose that there exists $\lambda > 0$ such that $\left( \bar{x},\bar{y}\right)$ solves \eqref{calm3} locally. Then, there exists $\delta > 0$ such that we have the inequality
	\begin{equation}\label{sci4}
	\lambda^{-1}F\left( \bar{x},\bar{y}\right) \leq \lambda^{-1} F\left( x,y\right)+f\left( x,y\right)-\varphi\left( x\right).
	\end{equation}
	for all $\left( x,y \right)\in \left( \bar{x},\bar{y}\right) + \delta \mathbb{B}_{\mathbb{R}^{n}\times \mathbb{R}^{m}}$ feasible for problem \eqref{calm3}.
	Let us set $\eta =\left\{\frac{\delta}{2k_{f}};\frac{\delta}{2k_{\varphi}};\frac{\delta}{2}\right\}$. Then, for any point $\left( x,y\right)\in \left( \bar{x},\bar{y}\right) + \eta \mathbb{B}_{\mathbb{R}^{n}\times \mathbb{R}^{m}}$ feasible  for problem \eqref{calm3}, one has
	\begin{equation*}
	\begin{array}{lcl}
	\mid f\left( x,y\right)-\varphi\left( x\right) \mid & = & \mid f\left( x,y\right)-f\left( \bar{x},\bar{y}\right)+\varphi\left( \bar{x}\right) - \varphi\left( x\right) \mid \\
	& \leq & k_{f}\parallel \left(x,y\right)- \left( \bar{x},\bar{y}\right)\parallel + k_{\varphi}\parallel x-\bar{x}\parallel\\
	& \leq & \delta .
	\end{array}
	\end{equation*}
	Thus, setting $t= \varphi\left( x\right)-f\left( x,y\right)$, the local optimality $\left( \ref{sci4}\right)$ leads to
	\begin{equation*}
		F\left( x,y\right)-F\left( \bar{x},\bar{y}\right)+\lambda \mid t\mid \geq 0.
	\end{equation*}
	The latter inequality  is valid for all $\left( x,y\right)\in \bigg (\left( \bar{x},\bar{y}\right) + \eta \mathbb{B}\bigg )\times \left( -\delta ;\delta\right)$  feasible to \eqref{calm1}. Consequently, $\left( \bar{x},\bar{y}\right)$ is a partially calm local optimal solution to the bilevel program \eqref{llvf}.
\end{proof}

\subsection{Duality under the Slater condition}
In this subsection, we aim to describe the dual problem of the auxiliary problem \eqref{calm3} and characterize the corresponding assumption. To proceed, let the feasible set of problem \eqref{calm3} be denoted by
\begin{equation}\label{eq4}
\mathcal{S} =\Big\{(x,y)\in\mathbb{R}^{n}\times\mathbb{R}^{m}: G_{i}(x,y)\leqslant 0, \ i\in I_{k}, \ \  g_{j}(x,y)\leqslant 0, \ j\in I_{p} \Big\},
\end{equation}
and consider the following dual of problem \eqref{calm3}:
\begin{equation}\label{weak-problem-stand}\tag{D$^{\lambda}$}
\begin{array}{l}
\underset{x^{*} \in \operatorname{dom} \varphi^{*}}{\inf} \underset{ \underset{ \alpha\in\mathbb{R}_{+}^{k}, \beta\in\mathbb{R}_{+}^{p},}
	{\left( z^{*},q^{\ast}\right) \in \mathbb{R}^{n}\times\mathbb{R}^{m}}
}{\sup}   \bigg\{ \varphi^{*}\left(x^{*}\right)-(\lambda^{-1}F+ f)^{*}\left( z^{\ast}, q^{\ast}\right)\\
\qquad \qquad \qquad \qquad \qquad \qquad -\left.\left( \displaystyle\sum_{i=1}^{k} \ \alpha_{i} G_{i} +{\displaystyle \sum_{j=1}^{p}} \  \beta_{j}g_{j}\right)^{\ast}\left(x^{\ast}-z^{*},-q^{\ast}\right)\right\}.
\end{array}
\end{equation}

Next, we will characterize the weak and strong duality between \eqref{calm3} and \eqref{weak-problem-stand}. To proceed, and from here on, we denote by $G=\left(G_{1},\cdots ,G_{k}\right)^{T}$ and $g=\left(g_{1},\cdots ,g_{p} \right)^{T}$ the vectors of upper and lower constraint functions, respectively.
 \begin{theorem}\label{weak-slater-op}
 	Consider problem \eqref{calm3} for some $\lambda > 0$ and let $F,f, g_{1}, \ldots, g_{p},G_{1}, \ldots, G_{k}$ be extended real-valued convex functions on $\mathbb{R}^{n}\times\mathbb{R}^{m}$. Assume that $\varphi$ is lower semicontinuous. Then, the weak duality between \eqref{calm3} and \eqref{weak-problem-stand} holds. Thus is, ${\mathcal V} \eqref{calm3} \geq {\mathcal V} \eqref{weak-problem-stand}$.
 \end{theorem}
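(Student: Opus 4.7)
The plan is to establish $\mathcal{V}\eqref{calm3}\geq\mathcal{V}\eqref{weak-problem-stand}$ by a direct Young-Fenchel argument applied at each feasible point of \eqref{calm3}. I would fix an arbitrary $(x,y)\in\mathcal{S}$ and show that, for every $x^{*}\in\operatorname{dom}\varphi^{*}$ and every tuple $(z^{*},q^{*},\alpha,\beta)\in\mathbb{R}^{n}\times\mathbb{R}^{m}\times\mathbb{R}_{+}^{k}\times\mathbb{R}_{+}^{p}$, the expression inside the inner supremum of \eqref{weak-problem-stand} is bounded above by $\varphi^{*}(x^{*})-\langle x^{*},x\rangle+\lambda^{-1}F(x,y)+f(x,y)$.

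The core computation is to sum two Young-Fenchel inequalities (Remark~\ref{eq.}(ii)): one applied to the convex function $\lambda^{-1}F+f$ at $(x,y)$ against the dual vector $(z^{*},q^{*})$, and one applied to $\sum_{i=1}^{k}\alpha_{i}G_{i}+\sum_{j=1}^{p}\beta_{j}g_{j}$ at $(x,y)$ against the dual vector $(x^{*}-z^{*},-q^{*})$. After summation the $\langle q^{*},y\rangle$ contributions cancel, the $\langle z^{*},x\rangle$ contributions combine with $-\langle x^{*}-z^{*},x\rangle$ to produce $-\langle x^{*},x\rangle$, and the residual term $\sum_{i}\alpha_{i}G_{i}(x,y)+\sum_{j}\beta_{j}g_{j}(x,y)$ is nonpositive by the feasibility of $(x,y)$ and the sign of the multipliers, hence can be dropped. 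Adding $\varphi^{*}(x^{*})$ to both sides yields precisely the claimed pointwise upper bound; since its right-hand side does not depend on $(z^{*},q^{*},\alpha,\beta)$, the inner supremum inherits the bound.

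It then remains to take the infimum over $x^{*}\in\operatorname{dom}\varphi^{*}$. Extending this infimum to all of $\mathbb{R}^{n}$ is harmless because $\varphi^{*}\equiv+\infty$ outside its domain, and the result equals $-\sup_{x^{*}\in\mathbb{R}^{n}}\{\langle x^{*},x\rangle-\varphi^{*}(x^{*})\}=-\varphi^{**}(x)$. Joint convexity of $f$ and $g_{1},\dots,g_{p}$ forces the marginal function $\varphi$ to be convex, and combined with the hypothesized lower semicontinuity and the properness implicit in $\operatorname{dom}\varphi^{*}\neq\emptyset$, Remark~\ref{eq.}(iii) yields the Fenchel-Moreau identity $\varphi^{**}=\varphi$. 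The upper bound therefore collapses to $\lambda^{-1}F(x,y)+f(x,y)-\varphi(x)$, exactly the primal objective at $(x,y)$, and infimizing over $(x,y)\in\mathcal{S}$ finishes the proof.

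The only non-routine ingredient is the Fenchel-Moreau step $\varphi^{**}=\varphi$, which requires $\varphi$ to be proper, convex, and lower semicontinuous; lower semicontinuity is hypothesized, convexity descends from the joint convexity of the lower-level data, and properness is a silent background assumption. Beyond that, the argument is a textbook Young-Fenchel bookkeeping chain, and I do not anticipate any genuine technical obstruction.
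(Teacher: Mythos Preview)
Your argument is correct: the two Young--Fenchel inequalities combine exactly as you describe, feasibility of $(x,y)\in\mathcal{S}$ lets you drop the $\sum_i\alpha_iG_i(x,y)+\sum_j\beta_jg_j(x,y)$ term, and the final $\inf_{x^*}$ collapses to $-\varphi^{**}(x)=-\varphi(x)$ once you invoke Fenchel--Moreau (convexity of $\varphi$ indeed follows from the joint convexity of $f$ and the $g_j$, since $\varphi$ is an infimal projection of a jointly convex function). The order of operations---sup over $(z^*,q^*,\alpha,\beta)$, then $\inf$ over $x^*$, then $\inf$ over feasible $(x,y)$---is handled correctly.

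Your route differs from the paper's. The paper first uses $\varphi=\varphi^{**}$ to rewrite the \emph{primal} objective as $\inf_{x^*\in\operatorname{dom}\varphi^*}\bigl[\lambda^{-1}F(x,y)+f(x,y)+\varphi^*(x^*)-\langle x^*,x\rangle\bigr]$, thereby decomposing \eqref{calm3} into a family of convex subproblems \eqref{parametre} indexed by $x^*$. It then \emph{identifies} \eqref{w-calm-1} as the Fenchel--Lagrange dual of each subproblem via the perturbation scheme of \cite{ref bot}, and finally appeals to \cite[Proposition~2.1]{ref10} for the weak duality $\mathcal{V}\eqref{parametre}\geq\mathcal{V}\eqref{w-calm-1}$, from which the overall inequality follows by taking $\inf_{x^*}$. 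In other words, the paper's proof is primarily concerned with \emph{deriving} the form of \eqref{weak-problem-stand} and then outsources the actual inequality to a cited result. Your proof takes \eqref{weak-problem-stand} as given and establishes the inequality directly by elementary conjugate calculus, making the argument entirely self-contained. The paper's approach buys a conceptual explanation of where the dual comes from; yours buys independence from external references and a transparent account of exactly which inequalities are in play.
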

 \begin{proof}
 	Since $\varphi$ is lower semicontinuous, then the equality $\varphi^{**}(x)=\varphi(x)$ holds. Following the standard convexification technique, and using the Fenchel equality for $\varphi(\cdot )$, stated in $(\ref{eq.dual})$, the problem \eqref{calm3} takes the following form
 	\begin{equation}\label{eq6}\tag{P$^{\lambda}$}
 	\underset{ x^{*} \in \operatorname{dom}  \varphi^{*} } {\inf} \ \ \underset{ \underset{G\left( x,y\right) \leq 0, \ g\left( x,y\right) \leq 0} {(x,y) \in \mathbb{R}^{n}\times\mathbb{R}^{m}}} { \inf} \bigg\{ \lambda^{-1} F(x,y)+ f(x,y)+ \varphi^{*}\left(x^{*}\right)-\left\langle x^{*}, x\right\rangle\bigg\}.
 	\end{equation}
 	For any $x^{*}\in\operatorname{dom}\varphi^{*}$ consider
 	\begin{equation}\label{parametre}\tag{P$^{\lambda}\left( x^{\ast}\right) $}
 	\underset{ \underset{G\left( x,y\right) \leq 0, \ g\left( x,y\right) \leq 0} {(x,y) \in \mathbb{R}^{n}\times\mathbb{R}^{m}}} { \inf} \bigg\{ \lambda^{-1} F(x,y)+ f(x,y)+ \varphi^{*}\left(x^{*}\right)-\left\langle x^{*}, x\right\rangle\bigg\},
 	\end{equation}
 	and set $\phi \left( x,y\right) =\lambda^{-1} F(x,y)+ f(x,y)+ \varphi^{*}\left(x^{*}\right)-\left\langle x^{*}, x\right\rangle$. Since, \eqref{parametre} is a convex optimisation problem, its Fenchel-Lagrange duality reformulation is
 	\begin{equation}\label{w-calm-1}\tag{D$^{\lambda}\left(x^{\ast}\right)$}
 	\begin{array}{ll}
 	\underset{ \underset{ \left( s^{*},q^{\ast}\right) \in\mathbb{R}^{n}\times\mathbb{R}^{m} }
 		{\alpha\in\mathbb{R}_{+}^{k}, \beta\in\mathbb{R}_{+}^{p}}
 	}{\sup} &  \bigg\{ -\phi^{\ast}\left( s^{*}, q^{\ast}\right)- \left( \displaystyle\sum_{i=1}^{k} \ \alpha_{i} G_{i}+{\displaystyle \sum_{j=1}^{p}} \  \beta_{j} g_{j}\right)^{\ast}\left(-s^{*},-q^{*}\right)  \bigg\}.
 	\end{array}
 	\end{equation}
 	
 		On the other hand, by an easy calculation, using the equality stated in $\left( \ref{eqr}\right)$ together with the definition of conjugate function, we get
 	\begin{equation}\label{w-calm-3}
 	\phi^{\ast}\left( s^{*}, q^{\ast}\right)= \left( \lambda^{-1} F+ f\right)^{\ast}\left( s^{*}+x^{\ast}, q^{\ast}\right)-\varphi^{\ast}\left( x^{\ast}\right).
 	\end{equation}
 	Inserting $\left(  \ref{w-calm-3}\right)$ in \eqref{w-calm-1}, we obtain
 	\begin{equation}\tag{D$^{\lambda}\left(x^{\ast}\right)$}
 	\begin{array}{l}
 	\underset{ \underset{ \alpha\in\mathbb{R}_{+}^{k}, \beta\in\mathbb{R}_{+}^{p},  }
 		{\left( s^{*},q^{\ast}\right) \in \mathbb{R}^{n}\times\mathbb{R}^{m}}
 	}{\sup}   \left\{ \varphi^{*}\left(x^{*}\right)-(\lambda^{-1}F+ f)^{*}\left( s^{*}+x^{\ast}, q^{\ast}\right)\right.\\
\qquad\quad \qquad \qquad \qquad \quad -\left.\left( \displaystyle\sum_{i=1}^{k} \ \alpha_{i} G_{i} +{\displaystyle \sum_{j=1}^{p}} \  \beta_{j}g_{j}\right)^{\ast}\left(-s^{*},-q^{\ast}\right)\right\}.
 	\end{array}
 	\end{equation}
 	Setting $z^{\ast}=s^{\ast}+x^{\ast}$, it follows
 	\begin{equation}\tag{D$^{\lambda}\left(x^{\ast}\right)$}
 	\begin{array}{l}
 	\underset{ \underset{ \alpha\in\mathbb{R}_{+}^{k}, \beta\in\mathbb{R}_{+}^{p},}
 		{\left( z^{*},q^{\ast}\right) \in \mathbb{R}^{n}\times\mathbb{R}^{m}}
 	}{\sup}   \left\{ \varphi^{*}\left(x^{*}\right)-(\lambda^{-1}F+ f)^{*}\left( z^{\ast}, q^{\ast}\right)\right.\\
\qquad \quad \qquad \qquad \qquad \quad -\left.\left( \displaystyle\sum_{i=1}^{k} \ \alpha_{i} G_{i} + {\displaystyle \sum_{j=1}^{p}} \  \beta_{j}g_{j}\right)^{\ast}\left(x^{\ast}-z^{*},-q^{\ast}\right)\right\}.
 	\end{array}
 	\end{equation}
 	Taking the infimum of \eqref{w-calm-1} for $x^{*}\in\operatorname{dom} \varphi^{*}$, we arrived at the desired dual problem \eqref{weak-problem-stand}. Finally, from \cite[Proposition 2.1]{ref10}, we have ${\mathcal V} \eqref{calm3} \geq {\mathcal V} \eqref{weak-problem-stand}$.
 \end{proof}

In general, the strong duality between \eqref{calm3} and \eqref{weak-problem-stand} does not necessarily hold. To overcome this situation, we introduce the following slater CQ:
 \begin{equation}\label{cq-slater-lambda}
\left\{
   \begin{array}{l}
    \exists (\bar{x},\bar{y}) \in \text{dom} \ \left( \lambda^{-1}F +f\right) \cap {\displaystyle \bigcap^{k}_{i=1}} \ \text{ri} \left( \text{dom} G_{i}\right) \cap {\displaystyle \bigcap^{p}_{j=1}} \ \text{ri} \left( \text{dom} g_{j}\right) \, \text{such that} \\
   g_{j}(\bar{x},\bar{y})<0, \ \ \forall j=1,\dots,p,\\
   G_{i}(\bar{x},\bar{y})<0, \ \ \forall i=1,\dots,k.
 \end{array}
  \right.
  \end{equation}
The following result provides a strong duality result for problem \eqref{calm3}.
 \begin{theorem}
 Let $F$, $f$, $g_{1}$, \ldots, $g_{p}$, and $G_{1}$, \ldots, $G_{k}$ be extended real-valued convex functions on $\mathbb{R}^{n}\times\mathbb{R}^{m}$. Moreover, suppose that CQ $\left( \ref{cq-slater-lambda}\right)$ is satisfied  and $\varphi$ is lower semicontinuous. Then, the strong duality between \eqref{calm3} and \eqref{weak-problem-stand} holds; that is, ${\mathcal V} \eqref{calm3}={\mathcal V} \eqref{weak-problem-stand}$ and the dual problem \eqref{weak-problem-stand} has an optimal solution.
 \end{theorem}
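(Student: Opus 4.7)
The plan is to leverage the reformulation established inside the proof of Theorem~\ref{weak-slater-op}. There, problem \eqref{calm3} was rewritten as
$$\mathcal{V}\eqref{calm3} \;=\; \inf_{x^{*}\in\operatorname{dom}\varphi^{*}} \mathcal{V}(\mathrm{P}^{\lambda}(x^{*})),$$
where $\mathrm{P}^{\lambda}(x^{*})$ is the convex program with objective $\phi(x,y)=\lambda^{-1}F(x,y)+f(x,y)+\varphi^{*}(x^{*})-\langle x^{*},x\rangle$ and the same convex inequality constraints $G_{i}\leq 0$, $g_{j}\leq 0$; likewise $\mathcal{V}\eqref{weak-problem-stand}=\inf_{x^{*}\in\operatorname{dom}\varphi^{*}}\mathcal{V}(\mathrm{D}^{\lambda}(x^{*}))$, where $\mathrm{D}^{\lambda}(x^{*})$ is the Fenchel--Lagrange dual of $\mathrm{P}^{\lambda}(x^{*})$. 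The crucial observation is that the Slater-type CQ~\eqref{cq-slater-lambda} is \emph{independent of} $x^{*}$: the same witness $(\bar{x},\bar{y})$ verifies it simultaneously for every parametrized problem $\mathrm{P}^{\lambda}(x^{*})$.

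Given this structure, the idea is to apply Theorem~\ref{f-l-convex} to each inner problem $\mathrm{P}^{\lambda}(x^{*})$. Its hypotheses are in force: the convexity of $\phi$ follows from the sum of convex functions plus a linear term (the constant $\varphi^{*}(x^{*})$ plays no role in the optimization over $(x,y)$); the $G_{i}$ and $g_{j}$ are convex by assumption; and \eqref{cq-slater-lambda} furnishes a point in the relative interior of every constraint domain with strictly negative slacks, which is exactly the generalized Slater condition needed by Theorem~\ref{f-l-convex}. Assuming the primal value is finite, one obtains $\mathcal{V}(\mathrm{P}^{\lambda}(x^{*}))=\mathcal{V}(\mathrm{D}^{\lambda}(x^{*}))$ with the inner supremum attained by some quadruple $(\bar{\alpha}(x^{*}),\bar{\beta}(x^{*}),\bar{z}^{*}(x^{*}),\bar{q}^{*}(x^{*}))\in\mathbb{R}_{+}^{k}\times\mathbb{R}_{+}^{p}\times\mathbb{R}^{n}\times\mathbb{R}^{m}$. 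Taking the infimum over $x^{*}\in\operatorname{dom}\varphi^{*}$ on both sides of this equality immediately delivers the value equality $\mathcal{V}\eqref{calm3}=\mathcal{V}\eqref{weak-problem-stand}$.

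To exhibit an optimal solution of \eqref{weak-problem-stand}, the outer infimum over $x^{*}$ must also be attained. The plan is: pick a primal optimal solution $(\bar{x},\bar{y})$ of \eqref{calm3} together with any $\bar{x}^{*}\in\partial\varphi(\bar{x})$, which is nonempty since $\varphi$ is a proper lower semicontinuous convex function finite at $\bar{x}$. The Fenchel--Young equality $\varphi(\bar{x})+\varphi^{*}(\bar{x}^{*})=\langle\bar{x}^{*},\bar{x}\rangle$, item~(i) of Remark~\ref{eq.}, then gives
$$\mathcal{V}(\mathrm{P}^{\lambda}(\bar{x}^{*}))\;\leq\;\lambda^{-1}F(\bar{x},\bar{y})+f(\bar{x},\bar{y})+\varphi^{*}(\bar{x}^{*})-\langle\bar{x}^{*},\bar{x}\rangle\;=\;\mathcal{V}\eqref{calm3},$$
so the outer infimum is attained at $\bar{x}^{*}$, and combining with the inner maximizer from the preceding step yields the optimal quintuple $(\bar{x}^{*},\bar{\alpha},\bar{\beta},\bar{z}^{*},\bar{q}^{*})$ for \eqref{weak-problem-stand}.

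The main obstacle is this last attainment step. Theorem~\ref{f-l-convex} guarantees the inner supremum is reached for each fixed $x^{*}$, but the outer infimum over $x^{*}$ requires either an implicit primal existence hypothesis for \eqref{calm3} or, alternatively, a careful approximation argument exploiting lower semicontinuity of $\varphi$ and the convexity of the remaining data. Should one only want the value equality, the outer attainment can be dropped entirely; strict existence of an optimal dual solution is what forces us to anchor the argument on subdifferential nonemptiness of $\varphi$ at a primal optimizer, which is the delicate piece connecting the Fenchel conjugacy rewriting to the classical Fenchel--Lagrange machinery.
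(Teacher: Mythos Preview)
Your approach is essentially the same as the paper's: fix $x^{*}\in\operatorname{dom}\varphi^{*}$, apply a strong-duality theorem to the convex inner problem $\mathrm{P}^{\lambda}(x^{*})$ using the Slater point from \eqref{cq-slater-lambda} (which, as you correctly observe, is independent of $x^{*}$), and then take the infimum over $x^{*}$. The only cosmetic difference is that the paper does not invoke Theorem~\ref{f-l-convex} directly; instead it first applies Lagrange duality (via \cite[Theorem~28.2]{rock1}) to obtain optimal multipliers $(\bar\alpha,\bar\beta)$, and then uses Proposition~\ref{inf-sum} to split the conjugate of the sum and recover the Fenchel--Lagrange dual expression with attainment in $(\bar z^{*},\bar q^{*})$. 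Your one-shot appeal to Theorem~\ref{f-l-convex} achieves the same thing, though note that theorem is stated for real-valued data whereas here the functions are extended-real-valued; the paper's route via \cite{rock1} and Proposition~\ref{inf-sum} handles that cleanly.

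Your final paragraph, however, attempts to prove more than the paper does. The paper's proof stops after establishing $\mathcal{V}(\mathrm{P}^{\lambda}(x^{*}))=\mathcal{V}(\mathrm{D}^{\lambda}(x^{*}))$ with the inner supremum attained for every $x^{*}$, and then simply takes the infimum over $x^{*}$; it never claims or proves that the outer infimum in \eqref{weak-problem-stand} is attained. So the ``optimal solution'' clause in the statement is being read in the weak sense (for each $x^{*}$ the inner dual is solvable), not as existence of a global saddle point. Your proposed fix for outer attainment---pick a primal minimizer $(\bar x,\bar y)$ of \eqref{calm3} and a subgradient $\bar x^{*}\in\partial\varphi(\bar x)$---would indeed work, but it imports two hypotheses the theorem does not assume: that \eqref{calm3} has a minimizer, and that $\varphi$ is subdifferentiable at $\bar x$ (lower semicontinuity and convexity alone do not guarantee $\partial\varphi(\bar x)\neq\emptyset$ without a relative-interior condition on $\operatorname{dom}\varphi$). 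Drop that last step and your argument matches the paper's.
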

\begin{proof} Let $x^{\ast}\in \text{dom} \ \varphi^{\ast}$ and consider the following problem
\begin{equation}\label{strong-wc-p}\tag{P$^{\lambda}\left(x^{\ast} \right)$}
\left\{
\begin{array}{ll}
\inf \ \lambda^{-1}F(x, y)+ f(x, y)- \langle x,x^{\ast}\rangle +\varphi^{\ast}(x^{\ast}) & \\
\left( x,y\right) \in \mathbb{R}^{n}\times\mathbb{R}^{m} &\\
G_{i}(x,y)\leqslant 0 \quad \quad i=1,\dots,k, &\\
g_{j}(x,y)\leqslant 0 \quad \quad j=1,\dots,p.
\end{array}
\right.
\end{equation}
Since, $\lambda^{-1}F(x, y)+ f(x, y)- \langle x,x^{\ast}\rangle +\varphi^{\ast}(x)$, $G_{i}$, with $i=1,\dots,k$ and $g_{j}$ with $j=1,\dots,p$, are convex and proper we can introduce the Lagrange dual problem to \eqref{strong-wc-p} with $\alpha =\left( \alpha_{1},\cdots ,\alpha_{k}\right)\in \mathbb{R}_{+}^{k}$ and $\beta =\left( \beta_{1},\cdots ,\beta_{k}\right)\in \mathbb{R}_{+}^{p}$ as dual variables
\begin{equation}\label{strong-wc-d}\tag{D$^{\lambda}\left(x^{\ast} \right)$}
\begin{array}{ll}
\underset{\left( \alpha ,\beta\right) \in \mathbb{R}_{+}^{k}\times\mathbb{R}_{+}^{p}}{\sup}\underset{\left( x,y\right) \in \mathbb{R}^{n}\times\mathbb{R}^{m}}{\inf} &  \bigg\{ \lambda^{-1}F(x, y)+ f(x, y)- \langle x,x^{\ast}\rangle \\
&
+\varphi^{\ast}(x^{\ast})+\displaystyle\sum_{i=1}^{k} \alpha_{i} G_{i}\left(x,y\right) +\sum_{j=1}^{p} \beta_{j}g_{j}\left(x,y\right)  \bigg\}.
\end{array}
\end{equation}
Since \eqref{cq-slater-lambda} condition holds, it follows from \cite[Theorem 28.2]{rock1} that the optimal objective values of \eqref{strong-wc-p} and its Lagrange dual \eqref{strong-wc-d} are equal and, moreover, there exists an optimal solution $\bar{\alpha} =\left( \bar{\alpha}_{1},\cdots ,\bar{\alpha}_{k}\right)\in \mathbb{R}_{+}^{k}$ and $\bar{\beta} =\left( \bar{\beta}_{1},\cdots ,\bar{\beta}_{k}\right)\in \mathbb{R}_{+}^{p}$ of \eqref{strong-wc-d} such that
\begin{equation*}
	\begin{array}{lcl}
	{\mathcal V} \eqref{strong-wc-p} & = &
		\underset{\left( \alpha ,\beta\right) \in \mathbb{R}_{+}^{k}\times\mathbb{R}_{+}^{p}}{\sup}\underset{\left( x,y\right) \in \mathbb{R}^{n}\times\mathbb{R}^{m}}{\inf}  \bigg\{ \lambda^{-1}F(x, y)+ f(x, y)- \langle x,x^{\ast}\rangle \\
		& & +\varphi^{\ast}(x^{\ast})+\displaystyle\sum_{i=1}^{k} \alpha_{i} G_{i}\left(x,y\right)+\sum_{j=1}^{p} \beta_{j}g_{j}\left(x,y\right)  \bigg\} \\
    	 & = & \underset{\left( x,y\right) \in \mathbb{R}^{n}\times\mathbb{R}^{m}}{\inf}  \left\{ \lambda^{-1}F(x, y)+ f(x, y)\right.\\
    & & \qquad \qquad \qquad - \left.\langle x,x^{\ast}\rangle +\varphi^{\ast}(x)+\displaystyle\sum_{i=1}^{k} \bar{\alpha}_{i} G_{i}\left(x,y\right)+\sum_{j=1}^{p} \bar{\beta}_{j}g_{j}\left(x,y\right)\right\}.
	\end{array}
\end{equation*}
Since, $\text{dom} \ \left( \lambda^{-1}F+ f- \langle \cdot ,x^{\ast}\rangle +\varphi^{\ast}(x^{\ast})\right) =dom \left(\lambda^{-1}F+f\right)$, we have
$${\displaystyle \bigcap^{k}_{i=1}} \ \text{ri} \ dom \left(G_{i}\right)\cap {\displaystyle \bigcap^{p}_{j=1}} \ \text{ri} \ dom \left(g_{j}\right) \cap \text{ri} \ dom \left(\lambda^{-1}F+f\right) \neq\emptyset .$$
Hence, one can deduce from Proposition \ref{inf-sum} that
\begin{equation*}
	\begin{array}{lcl}
		{\mathcal V} \eqref{strong-wc-p} & = & -\bigg( \lambda^{-1}F+ f- x^{\ast} +\varphi^{\ast}(x^{\ast})+\displaystyle\sum_{i=1}^{k} \bar{\alpha}_{i} G_{i}+\sum_{j=1}^{p} \bar{\beta}_{j}g_{j}  \bigg)^{\ast}\left( 0\right) \\
		& = & \underset{ \left( s^{\ast} ,q^{\ast}\right) \in \mathbb{R}^{n}\times\mathbb{R}^{m}} {\sup} \bigg\{ \varphi^{*}\left(x^{*}\right)-(\lambda^{-1}F+ f)^{*}\left( s^{*}+x^{\ast},q^{\ast}\right)\\
       &   & \qquad \qquad \qquad \quad-\left.\left( {\displaystyle\sum_{i=1}^{k}} \  \bar{\alpha}_{i} G_{i} +{\displaystyle \sum_{j=1}^{p}}  \bar{\beta}_{j}g_{j}\right)^{\ast}\left(-s^{\ast},-q^{\ast}\right)\right\}
	\end{array}
\end{equation*}
and that there exists $\bar{s}^{\ast}\in\mathbb{R}^{n}$ and $\bar{q}^{\ast}\in\mathbb{R}^{m}$ such that the supremum is attained. Thus
\begin{equation*}
{\mathcal V} \eqref{strong-wc-p} = \varphi^{*}\left(x^{*}\right)-(\lambda^{-1}F+ f)^{*}\left( \bar{s}^{\ast}+x^{\ast},\bar{q}^{\ast}\right)-\left( {\displaystyle\sum_{i=1}^{k}} \  \bar{\alpha}_{i} G_{i} +{\displaystyle \sum_{j=1}^{p}}  \bar{\beta}_{j}g_{j}\right)^{\ast}\left(-\bar{s}^{\ast},-\bar{q}^{\ast}\right).
\end{equation*}

Setting $\bar{z}^{\ast}=\bar{s}^{\ast}+x^{\ast}$, we get
\begin{equation*}
{\mathcal V} \eqref{strong-wc-p} = \varphi^{*}\left(x^{*}\right)-(\lambda^{-1}F+ f)^{*}\left( \bar{z}^{\ast},\bar{q}^{\ast}\right)-\left( {\displaystyle\sum_{i=1}^{k}} \  \bar{\alpha}_{i} G_{i} +{\displaystyle \sum_{j=1}^{p}}  \bar{\beta}_{j}g_{j}\right)^{\ast}\left(x^{\ast}-\bar{z}^{\ast},-\bar{q}^{\ast}\right).
\end{equation*}
Hence, ${\mathcal V} \eqref{strong-wc-p}={\mathcal V} \eqref{strong-wc-d}$. Since, $x^{\ast}$ is arbitrary choosen in $\text{dom} \ \varphi^{\ast}$, then ${\mathcal V} \eqref{calm3}={\mathcal V} \eqref{weak-problem-stand}$. Thus, the proof is
complete.
\end{proof}

 \begin{remark}
 To compute the conjugate function of the value fucntion $\varphi$, we asume that $K\left( x\right)$ is independent of $x$. Thus, we set for all $x\in\mathbb{R}^{n}$,  $K\left( x\right) =K=\{y\in\mathbb{R}^{m}|g\left( y\right) \leq 0\}$. Then
 	\begin{equation*}
 		\varphi^{*}(x^{*}) = (f+\delta_{\mathbb{R}^{n}\times K})^{*}(0,x^{*})
 	\end{equation*}
and we have
 	$$
 	\begin{aligned}
 	\varphi^{*}(x^{*}) &=\underset{x\in\mathbb{R}^{n}}{\sup} \bigg\{\langle x^{*}, x\rangle-\varphi(x): x \in \mathbb{R}^{n}\bigg\} \\
 	&=\underset{x\in\mathbb{R}^{n}}{\sup}\bigg\{\langle x^{*}, x\rangle-\underset{y\in\mathbb{R}^{m}}{\inf}\Big\{  f(x, y)+\delta_{K}(y)\Big\}\bigg\} \\
 	&=\underset{x\in\mathbb{R}^{n}}{\sup} \ \underset{y\in\mathbb{R}^{m}}{\sup}\bigg\{\langle x^{*}, x\rangle-f(x,y)-\delta_{K}(y)\bigg\}\\
 	&=\underset{x,y}{\sup} \bigg\{\langle x^{*}, x\rangle-(f(x, y)+\delta_{K}(y))\bigg\} \\
 	& =(f+\delta_{\mathbb{R}^{n}\times K})^{*}(0,x^{*}).
 	\end{aligned}
 	$$
 \end{remark}

\subsection{Duality under further regularity conditions}
In general, weak duality (hence, strong duality) between \eqref{calm3} and \eqref{weak-problem-stand} does not necessarily hold in many situations, including when $\varphi$ is not lower semicontinuous. 
In order to describe the weak and strong dualities between \eqref{calm3} and \eqref{weak-problem-stand}, we modify the structure of the dual problem as
\begin{equation}\label{form-3}\tag{D$_{m}^{\lambda}$}
\underset{x^{*} \in \operatorname{dom} \varphi^{*}}{\inf} \underset{\underset{\alpha\in\mathbb{R}_{+}^{k}, \beta\in\mathbb{R}_{+}^{p}, \left( z^{*},q^{\ast}\right) \in\operatorname{dom}F^{*}\cap\operatorname{dom}f^{*}}{(u_{i}^{*},v_{i}^{*})\in\operatorname{dom}G_{i}^{*}, (u_{j}^{*},v_{j}^{*})\in\operatorname{dom}g_{j}^{*}} }{\sup}   \bigg\{ \psi \left(x^{\ast},z^{\ast},q^{\ast}, u^{\ast},v^{\ast},\alpha ,\beta\right) -\delta^{\ast}_{\mathbb{R}^{n}\times\mathbb{R}^{m}} \left( s^{\ast},t^{\ast}\right) \bigg\},
\end{equation}
where $\psi$ is given by
\begin{equation*}
\psi \left(x^{\ast},z^{\ast},q^{\ast}, u^{\ast},v^{\ast},\alpha ,\beta\right) =\varphi^{*}\left(x^{*}\right)-(\lambda^{-1}F+ f)^{*}\left( z^{*},q^{\ast}\right)-\displaystyle\sum_{i=1}^{k} \alpha_{i} G_{i}^{*}\left(u_{i}^{*},v_{i}^{*}\right)-\sum_{j=1}^{p} \beta_{j}g_{j}^{*}\left(u_{j}^{*},v_{j}^{*}\right)
\end{equation*}
with
$u^{\ast}=\left( u^{\ast}_{1},\cdots ,u^{\ast}_{k+p}\right)$, $v^{\ast}=\left( v^{\ast}_{1},\cdots ,v^{\ast}_{k+p}\right)  $, $s^{\ast}= x^{\ast}- z^{*}-\displaystyle\sum_{i=1}^{k} \alpha_{i} u_{i}^{*}-\sum_{j=1}^{p} \beta_{j}u_{j}^{*}$, \\
and $t^{\ast}=-q^{\ast}-\displaystyle\sum_{i=1}^{k} \alpha_{i} v_{i}^{*}-\sum_{j=1}^{p} \beta_{j} v_{j}^{*}$.

Secondly, we exploit the semi-further regularity condition (SFRC) and further regularity condition (SFRC) introduced in \cite{ref3}. The cited regularities are based on the properties of the epigraph of the conjugate functions. To proceed, we set
\begin{equation*}
	\Gamma =\operatorname{epi}(\lambda^{-1}F+f)^{*}, \ \ \Lambda =\underset{i=1}{\bigcup^{k}}\operatorname{epi}G_{i}^{*}, \ \ \Pi =\underset{j=1}{\bigcup^{p}}\operatorname{epi}g_{j}^{*}.
\end{equation*}
Based on the above sets, we introduce the  characteristic set ${\mathcal{R}}$ defined by
\begin{equation*}
{\mathcal{R}}:=\underset{x^{*}\in\operatorname{dom}{\varphi}^{*}}{\bigcap} \big( \Gamma + \operatorname{cone}\left( \Lambda \cup \Pi\right) +\operatorname{epi}\delta^{*}_{\mathbb{R}^{n}\times\mathbb{R}^{m} }-\left(\left( x^{*},0\right) ;{\varphi}^{*}(x^{*})\right) \big).
\end{equation*}
Considering the possible relationships between $\mathcal{R}$ and $\operatorname{epi}\left(\lambda^{-1}F+f-\varphi+\delta_{\mathcal{\mathcal{S}}}\right)^{*}$, we introduce the following regularity conditions.
\begin{definition}
	The family $\left((\lambda^{-1}F+f),\varphi,\delta_{\mathbb{R}^{n}\times\mathbb{R}^{m}},G_{i},g_{j}\right)$, $i=1,\dots,k$, $j=1,\dots,p $ is said to satisfy the semi-further regularity condition (SFRC) if
	\begin{equation}\label{sfrc-def} \mathcal{R}\cap\left(\{0_{\mathbb{R}^{n}\times\mathbb{R}^{m}}\}\times\mathbb{R}\right)\subseteq\operatorname{epi}\left(\lambda^{-1}F+f-\varphi+\delta_{\mathcal{S}}\right)^{*}\cap\left(\{0_{\mathbb{R}^{n}\times\mathbb{R}^{m}}\}\times\mathbb{R}\right).
	\end{equation}
If equality holds in $\left( \ref{sfrc-def}\right) $, we say that the further regularity condition (FRC) is satisfied.
\end{definition}

\begin{remark}
In general, neither
\begin{equation*}
	\mathcal{R}\subseteq\operatorname{epi}\left((\lambda^{-1}F+f)-\varphi+\delta_{\mathcal{S}}\right)^{*} \ \ \text{nor} \ \
	\operatorname{epi}\left((\lambda^{-1}F+f)-\varphi+\delta_{\mathcal{S}}\right)^{*} \subseteq\mathcal{R}
\end{equation*}
holds necessarily. But when $\varphi$ is convex, one gets
\begin{equation*}
	\mathcal{R}\subseteq\operatorname{epi}\left((\lambda^{-1}F+f)-\varphi+\delta_{\mathcal{S}}\right)^{*}.
\end{equation*}
Indeed, by Proposition $\ref{cl2}\ (ii)$, we have
\begin{equation*}
\begin{aligned}
\left((\lambda^{-1}F+f)-\varphi\right)^{*}&=\left(\inf\left(\lambda^{-1}F+f-x^{*}+{\varphi}^{*}(x^{*})\right)     \right)^{*}\\
&= \sup\left(\lambda^{-1}F+f-x^{*}+{\varphi}^{*}(x^{*})\right)^{*}.
\end{aligned}
\end{equation*}
This, together with $(\ref{eqr-1}) \ \text{and Proposition}\ \ref{cl2}\ (i)$,  implies that
\begin{equation*}
\begin{aligned}
\operatorname{epi}\left((\lambda^{-1}F+f)-\varphi\right)^{*}&=\underset{x^{*}\in\operatorname{dom}\varphi^{*}}{\bigcap}\operatorname{epi}\left(\lambda^{-1}F+f-x^{*}+{\varphi}^{*}(x^{*})\right)^{*}\\
&= \underset{x^{*}\in\operatorname{dom}\varphi^{*}}{\bigcap} \operatorname{epi}(\lambda^{-1}F+f)^{*}-(x^{*},{\varphi}^{*}(x^{*})).
\end{aligned}
\end{equation*}
Thus,
\begin{equation*}
\begin{array}{l}
\operatorname{epi}\left((\lambda^{-1}F+f)-\varphi\right)^{*}+\operatorname{cone}\left( \Lambda \cup\Pi\right) +\operatorname{epi}\delta^{*}_{\mathbb{R}^{n}\times\mathbb{R}^{m} }\\ = \underset{x^{*}\in\operatorname{dom}{\varphi}^{*}}{\displaystyle\bigcap}\left(\Gamma +\operatorname{cone}\left( \Lambda \cup\Pi\right)  +\operatorname{epi}\delta^{*}_{\mathbb{R}^{n}\times\mathbb{R}^{m} }-(x^{*},{\varphi}^{*}(x^{*}))\right)\\
= \mathcal{R}.
\end{array}
\end{equation*}
On the other hand, using the equality $(3.5)$ in \cite{ref6}, we obtain

\begin{equation*}
		\operatorname{epi}\left((\lambda^{-1}F+f)-\varphi\right)^{*}+\operatorname{cone}\left( \Lambda \cup\Pi\right) +\operatorname{epi}\delta^{*}_{\mathbb{R}^{n}\times\mathbb{R}^{m} } \subseteq \operatorname{epi}\left((\lambda^{-1}F+f)-\varphi+\delta_{\mathcal{S}}\right)^{*}.
\end{equation*}
Consequently,
 \begin{equation*}
 	\mathcal{R}\subseteq\operatorname{epi}\left((\lambda^{-1}F+f)-\varphi+\delta_{\mathcal{S}}\right)^{*}.
 \end{equation*}
\end{remark}

\begin{remark}\label{r.s.d}
It can be seen that the strong duality for \eqref{calm3} and \eqref{form-3} holds if and only if ${\mathcal V}\eqref{calm3}={\mathcal V}\eqref{form-3}$ and for any $x^{*} \in \operatorname{dom} \varphi^{*}$, we can find points $\left(z^{\ast},q^{\ast} \right)\in \operatorname{dom}(\lambda^{-1}F+f)^{*}$, $\left(u_{i}^{\ast},v_{i}^{\ast} \right)\in \operatorname{dom}G_{i}^{*}$, $i=1,\cdots ,k$, $\left(u_{j}^{\ast},v_{j}^{\ast} \right)\in \operatorname{dom}g_{j}^{*}$, $j=1,\cdots ,p$, $\mu_{i}\in\mathbb{R}_{+}$, $i=1,\cdots ,k$, $\nu_{j}\in \mathbb{R}_{+}$, $j=1,\cdots ,p$ and $\left(s^{\ast},t^{\ast}\right) \in \mathbb{R}^{n}\times\mathbb{R}^{m}$ such that we have 
	\begin{equation*}
		\varphi\left( x^{\ast}\right)-(\lambda^{-1}F+f)^{*}\left(z^{\ast},q^{\ast} \right)-{\displaystyle \sum^{k}_{i=1}} \ \mu_{i}G^{\ast}_{i}\left(u_{i}^{\ast},v_{i}^{\ast} \right)-{\displaystyle \sum^{p}_{j=1}} \ \nu_{j}g^{\ast}_{j}\left(u_{j}^{\ast},v_{j}^{\ast} \right)-\delta^{*}_{\mathbb{R}^{n}\times\mathbb{R}^{m} }\left(s^{\ast},t^{\ast}\right) \geq {\mathcal V}\eqref{form-3},
	\end{equation*}
with $s^{\ast} = x^{\ast}- z^{*}-\displaystyle\sum_{i=1}^{k} \mu_{i} u_{i}^{*}-\sum_{j=1}^{p} \nu_{j}u_{j}^{*}$, and $t^{\ast} = -q^{\ast}-\displaystyle\sum_{i=1}^{k} \mu_{i} v_{i}^{*}-\sum_{j=1}^{p} \nu_{j} v_{j}^{*}$.
\end{remark}

The following theorems provide weak and strong Fenchel–Lagrange duality results with regards to an optimization problem of the form \eqref{calm3}.
\begin{theorem}[Weak Fenchel–Lagrange duality]\label{w.d}
Let $\lambda>0$ and $F$, $f$, $G_{i}$, $g_{j}$, $i=1,\dots,k$, $j=1,\dots, p$ be proper convex functions. The family $\left((\lambda^{-1}F+f),\; \varphi,\delta_{\mathbb{R}^{n}\times\mathbb{R}^{m}},\; G_{i},\; g_{j}\right)$, $i=1,\dots,k$, $j=1,\dots,p$ satisfies (SFRC) if and only if the weak duality between \eqref{calm3} and \eqref{form-3} holds.
\end{theorem}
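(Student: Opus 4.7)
The plan is to reduce both sides of the asserted equivalence to the single numerical inequality $\mathcal{V}\eqref{calm3} \geq \mathcal{V}\eqref{form-3}$. The (SFRC) condition is an inclusion between two subsets of $\{0_{\mathbb{R}^n\times\mathbb{R}^m}\}\times\mathbb{R}$, and my strategy is to show that each side encodes exactly one of the two optimal values, so that the inclusion is equivalent to the comparison of these values, i.e.\ to weak duality.

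First, I would handle the right-hand side of (SFRC). Using the elementary identity $\phi^*(0)=-\inf\phi$ for any proper function $\phi$,
$$
(\lambda^{-1}F+f-\varphi+\delta_{\mathcal{S}})^*(0_{\mathbb{R}^n\times\mathbb{R}^m})=-\mathcal{V}\eqref{calm3},
$$
so that
$$
\operatorname{epi}(\lambda^{-1}F+f-\varphi+\delta_{\mathcal{S}})^*\cap\bigl(\{0_{\mathbb{R}^n\times\mathbb{R}^m}\}\times\mathbb{R}\bigr)=\{0_{\mathbb{R}^n\times\mathbb{R}^m}\}\times[-\mathcal{V}\eqref{calm3},+\infty).
$$

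The main step is to give the analogous description of $\mathcal{R}\cap(\{0_{\mathbb{R}^n\times\mathbb{R}^m}\}\times\mathbb{R})$. I would fix $x^*\in\operatorname{dom}\varphi^*$ and expand the Minkowski sum defining $\mathcal{R}$. Since every element of $\operatorname{cone}(\Lambda\cup\Pi)$ can be written as a finite nonnegative combination of points of $\operatorname{epi}G_i^*$ and $\operatorname{epi}g_j^*$, and every element of $\operatorname{epi}\delta^*_{\mathbb{R}^n\times\mathbb{R}^m}$ has its first two coordinates equal to zero, a point $(0,r)$ belongs to $\Gamma+\operatorname{cone}(\Lambda\cup\Pi)+\operatorname{epi}\delta^*_{\mathbb{R}^n\times\mathbb{R}^m}-((x^*,0),\varphi^*(x^*))$ if and only if there exist $(z^*,q^*)\in\operatorname{dom}(\lambda^{-1}F+f)^*$, multipliers $\alpha_i,\beta_j\geq 0$, and vectors $(u_i^*,v_i^*)\in\operatorname{dom}G_i^*$, $(\tilde u_j^*,\tilde v_j^*)\in\operatorname{dom}g_j^*$ satisfying the first-coordinate identities
$$
s^*:=x^*-z^*-\sum_i\alpha_iu_i^*-\sum_j\beta_j\tilde u_j^*=0,\qquad t^*:=-q^*-\sum_i\alpha_iv_i^*-\sum_j\beta_j\tilde v_j^*=0,
$$
and, after minimizing over the free scalar heights in each epigraph,
$$
r\geq (\lambda^{-1}F+f)^*(z^*,q^*)+\sum_i\alpha_iG_i^*(u_i^*,v_i^*)+\sum_j\beta_jg_j^*(\tilde u_j^*,\tilde v_j^*)-\varphi^*(x^*).
$$
This is precisely $-r\leq \psi(x^*,z^*,q^*,u^*,v^*,\alpha,\beta)-\delta^*_{\mathbb{R}^n\times\mathbb{R}^m}(s^*,t^*)$ for some admissible tuple (with $s^*,t^*=0$ making the support-function term vanish). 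Converting the existential quantifier into a supremum over the dual variables and then intersecting over $x^*\in\operatorname{dom}\varphi^*$ yields $-r\leq \mathcal{V}\eqref{form-3}$, so that
$$
\mathcal{R}\cap\bigl(\{0_{\mathbb{R}^n\times\mathbb{R}^m}\}\times\mathbb{R}\bigr)=\{0_{\mathbb{R}^n\times\mathbb{R}^m}\}\times[-\mathcal{V}\eqref{form-3},+\infty).
$$

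Combining the two identities, (SFRC) is the inclusion $[-\mathcal{V}\eqref{form-3},+\infty)\subseteq[-\mathcal{V}\eqref{calm3},+\infty)$, which holds iff $-\mathcal{V}\eqref{form-3}\geq -\mathcal{V}\eqref{calm3}$, i.e.\ iff $\mathcal{V}\eqref{calm3}\geq\mathcal{V}\eqref{form-3}$, which is exactly weak duality. The main obstacle will be the careful bookkeeping in the expansion of $\operatorname{cone}(\Lambda\cup\Pi)$: one must produce the nonnegative multipliers $\alpha_i,\beta_j$ and the epigraph representatives $(u_i^*,v_i^*),(\tilde u_j^*,\tilde v_j^*)$, perform the infimum over the scalar slacks so that $G_i^*$, $g_j^*$ and $(\lambda^{-1}F+f)^*$ appear with exactly the right coefficients, and match the implicit constraints $s^*=0$, $t^*=0$ coming from $\operatorname{epi}\delta^*_{\mathbb{R}^n\times\mathbb{R}^m}$ with the support-function term in the definition of $\psi$ used in \eqref{form-3}.
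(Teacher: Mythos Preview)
Your approach is the same computation as the paper's, just packaged differently: you compute the two slices $\mathcal{R}\cap(\{0\}\times\mathbb{R})$ and $\operatorname{epi}(\lambda^{-1}F+f-\varphi+\delta_{\mathcal S})^*\cap(\{0\}\times\mathbb{R})$ explicitly as rays and compare their endpoints, while the paper argues element by element (by contradiction in the forward direction, and by direct unpacking of a point $(0,\zeta)\in\mathcal R$ in the converse).

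There is, however, one imprecision. The step ``converting the existential quantifier into a supremum'' is only a one-way implication: from $(0,r)\in\mathcal{R}$ you indeed get, for every $x^*\in\operatorname{dom}\varphi^*$, dual variables with $\psi\ge -r$, and hence $\mathcal{V}\eqref{form-3}\ge -r$. The converse can fail at the boundary: if $-r=\mathcal{V}\eqref{form-3}$ and for some $x^*$ the inner supremum equals $-r$ without being attained, then no admissible tuple realizes $\psi\ge -r$, and $(0,r)\notin\mathcal{R}$. Thus your asserted equality
\[
\mathcal{R}\cap(\{0\}\times\mathbb{R})=\{0\}\times[-\mathcal{V}\eqref{form-3},+\infty)
\]
need not hold; the slice can be $\{0\}\times(-\mathcal{V}\eqref{form-3},+\infty)$. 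This does not destroy the final equivalence, because both $[-a,\infty)\subseteq[-b,\infty)$ and $(-a,\infty)\subseteq[-b,\infty)$ reduce to $a\le b$; but you should argue this explicitly rather than claim an identity that may be false. The paper sidesteps the issue by taking a strict inequality $\mathcal{V}\eqref{calm3}<-\zeta<\mathcal{V}\eqref{form-3}$ in the contradiction argument, so that the inner supremum strictly exceeds $-\zeta$ and a witnessing tuple is guaranteed to exist.
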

\begin{proof} Suppose that the weak duality between \eqref{calm3} and \eqref{form-3} does not hold. Then, 
\[
{\mathcal V}\eqref{calm3}<-\zeta <{\mathcal V}\eqref{form-3}
\]
for some $\zeta\in\mathbb{R}$. From \eqref{form-3}, for any $x^{*}\in\operatorname{dom}{\varphi}^{*}$, there exist $\alpha\in\mathbb{R}_{+}^{k}$, $\beta\in\mathbb{R}_{+}^{p}$, $\left( z^{*},q^{\ast}\right) \in\operatorname{dom} F^{*}\cap\operatorname{dom}f^{*}$, $(u_{i}^{*},v_{i}^{*})\in\operatorname{dom}G_{i}^{*}$, and $(u_{j}^{*},v_{j}^{*})\in\operatorname{dom}g_{j}^{*}$ such that

\begin{equation*}
	\varphi^{*}\left(x^{*}\right)-(\lambda^{-1}F+ f)^{*}\left( z^{*},q^{\ast}\right)-\displaystyle\sum_{i=1}^{k} \alpha_{i} G_{i}^{*}\left(u_{i}^{*},v_{i}^{*}\right)-\sum_{j=1}^{p} \beta_{j}g_{j}^{*}\left(u_{j}^{*},v_{j}^{*}\right)-\delta^{\ast}_{\mathbb{R}^{n}\times\mathbb{R}^{m}} \left( s^{\ast},t^{\ast}\right) \geqslant -\zeta ,
\end{equation*}
with $s^{\ast} = x^{\ast}- z^{*}-\displaystyle\sum_{i=1}^{k} \alpha_{i} u_{i}^{*}-\sum_{j=1}^{p} \beta_{j}u_{j}^{*}$, and $t^{\ast}=-q^{\ast}-\displaystyle\sum_{i=1}^{k} \alpha_{i} v_{i}^{*}-\sum_{j=1}^{p} \beta_{j} v_{j}^{*}$. Thus,
\begin{equation*}
	\delta^{\ast}_{\mathbb{R}^{n}\times\mathbb{R}^{m}} \left( s^{\ast},t^{\ast}\right) \leq \varphi^{*}\left(x^{*}\right)-(\lambda^{-1}F+ f)^{*}\left( z^{*},q^{\ast}\right)-\displaystyle\sum_{i=1}^{k} \alpha_{i} G_{i}^{*}\left(u_{i}^{*},v_{i}^{*}\right)-\sum_{j=1}^{p} \beta_{j}g_{j}^{*}\left(u_{j}^{*},v_{j}^{*}\right) +\zeta .
\end{equation*}
It follows that
\begin{equation*}
\left( \left( s^{\ast}, t^{*}\right) ; \varphi^{*}\left(x^{*}\right)-(\lambda^{-1}F+ f)^{*}\left( z^{*},q^{\ast}\right)-\displaystyle\sum_{i=1}^{k} \alpha_{i} G_{i}^{*}\left(u_{i}^{*},v_{i}^{*}\right)-\sum_{j=1}^{p} \beta_{j}g_{j}^{*}\left(u_{j}^{*},v_{j}^{*}\right) +\zeta\right) \in \text{epi} \delta^{\ast}_{\mathbb{R}^{n}\times\mathbb{R}^{m}}.
\end{equation*}
Or equivalently,
\begin{equation*}
\begin{array}{l}
\left( 0,\zeta\right) \in \left( -\left( s^{\ast}, t^{*}\right) ; -\varphi^{*}\left(x^{*}\right)+(\lambda^{-1}F+ f)^{*}\left( z^{*},q^{\ast}\right)\right.\\
\qquad \qquad \qquad \quad \left.+\displaystyle\sum_{i=1}^{k} \alpha_{i} G_{i}^{*}\left(u_{i}^{*},v_{i}^{*}\right)-\sum_{j=1}^{p} \beta_{j}g_{j}^{*}\left(u_{j}^{*},v_{j}^{*}\right) \right) + \text{epi} \delta^{\ast}_{\mathbb{R}^{n}\times\mathbb{R}^{m}}.
\end{array}
\end{equation*}
Writing $\left( s^{\ast}, t^{*}\right)$ as
\begin{equation*}
\left( s^{\ast}, t^{*}\right)=-\left( z^{*},q^{*}\right)-\left(\displaystyle\sum_{i=1}^{k} \alpha_{i}\left(  u_{i}^{*} ,v_{i}^{*}\right)\right)  -\left(\displaystyle\sum_{j=1}^{p} \beta_{i}\left(  u_{j}^{*} ,v_{j}^{*}\right)\right) +\left( x^{\ast},0\right),
\end{equation*}
one deduces that
\begin{equation*}
\begin{array}{lcl}
\left( 0;\zeta\right) & \in & \left( \left( z^{\ast},q^{\ast}\right);(\lambda^{-1}F+ f)^{*}\left(z^{*},q^{*}\right)\right) -\left( \left( x^{\ast},0\right);\varphi^{*}\left(x^{*}\right) \right) \\
& & +\displaystyle\sum_{i=1}^{k} \alpha_{i} \left( \left(  u_{i}^{*} ,v_{i}^{*}\right); G^{\ast}_{i}\left(  u_{i}^{*} ,v_{i}^{*}\right)\right)
  \displaystyle\sum_{j=1}^{p} \beta_{j} \left( \left(  u_{j}^{*} ,v_{j}^{*}\right); g^{\ast}_{j}\left(  u_{j}^{*} ,v_{j}^{*}\right)\right)
   + \operatorname{epi}\delta^{*}_{\mathbb{R}^{n}\times\mathbb{R}^{m} }.
\end{array}
\end{equation*}
Consequently,
\begin{equation*}
\left( 0;\zeta\right) \in	\big( \Gamma + \operatorname{cone}\left( \Lambda \cup \Pi\right) +\operatorname{epi}\delta^{*}_{\mathbb{R}^{n}\times\mathbb{R}^{m} }-\left(\left( x^{*},0\right) ;{\varphi}^{*}(x^{*})\right).
\end{equation*}
Since  $x^{\ast}\in \operatorname{dom} \varphi^{\ast}$ is arbitrary, we have that $(0,\zeta)\in \mathcal{R}$. Now, since the family\\ $\left((\lambda^{-1}F+f),\; \varphi,\; \delta_{\mathbb{R}^{n}\times\mathbb{R}^{m}}\right)$, $G_{i}$, $g_{j}$ for $i=1,\dots,k$, $j=1,\dots,p$ satisfies (SFRC), we have that
$$(0,\zeta)\in\operatorname{epi}\left(\lambda^{-1}F+f-\varphi+\delta_{\mathcal{S}}\right)^{*} .$$
Remarking that
\begin{equation*}
	{\mathcal V}\eqref{calm3}=\inf\{\left( \lambda^{-1}F+f\right)\left( x,y\right)  -\varphi\left( x\right) +\delta_{\mathcal S}\left( x,y\right) \}=-\left(  \lambda^{-1}F+f -\varphi +\delta_{\mathcal S} \right)^{\ast} \left( 0\right),
\end{equation*}
one can deduce
\begin{equation}\label{lemme-i}
(0,\zeta)\in\operatorname{epi}\left(\lambda^{-1}F+f-\varphi+\delta_{\mathcal{S}}\right)^{*} \  \ {\mathcal V}\eqref{calm3}\geqslant -\zeta ,
\end{equation}
Hence, ${\mathcal V}\eqref{calm3}\geqslant- \zeta$. This contradicts ${\mathcal V}\eqref{calm3}<-\zeta$. Thus, the weak duality between problem \eqref{calm3} and problem \eqref{form-3} holds.

Conversely, suppose the weak duality between \eqref{calm3} and \eqref{form-3} holds. Lets $
(0,\zeta)\in \mathcal{R}$ and $x^{\ast}\in \text{dom} \varphi$. By definition, we have
\begin{equation*}
	\left( x^{\ast},\zeta +\varphi\left( x^{\ast}\right)\right)  \in \operatorname{epi}(\lambda^{-1}F+f)^{*}+ \operatorname{cone}\left( \underset{i=1}{\bigcup^{k}}\operatorname{epi}G_{i}^{*} \cup \underset{j=1}{\bigcup^{p}}\operatorname{epi}g_{j}^{*}\right) +\operatorname{epi}\delta^{*}_{\mathbb{R}^{n}\times\mathbb{R}^{m} }.
\end{equation*}
Then there exist $\left( \left(z^{\ast},q^{\ast} \right);\gamma\right)\in \operatorname{epi}(\lambda^{-1}F+f)^{*}$,
\\$\left( \left(u_{i}^{\ast},v_{i}^{\ast} \right);\gamma_{i}\right)\in \operatorname{epi}G_{i}^{*}$, $i=1,\cdots ,k$, $\left( \left(u_{j}^{\ast},v_{j}^{\ast} \right);\gamma_{j}\right)\in \operatorname{epi}g_{j}^{*}$, $j=1,\cdots ,p$, $\mu_{i}\in\mathbb{R}_{+}$, $i=1,\cdots ,k$, $\nu_{j} \in\mathbb{R}_{+}$, $j=1,\cdots ,p$ and $\left( \left(s^{\ast},t^{\ast}\right) ;\sigma \right) \in \operatorname{epi}\delta^{*}_{\mathbb{R}^{n}\times\mathbb{R}^{m} }$ such that
\begin{equation*}
\left( \left( x^{\ast},0\right),\zeta +\varphi\left( x^{\ast}\right)\right)=\left( \left(z^{\ast},q^{\ast} \right);\gamma\right)+{\displaystyle \sum^{k}_{i=1}} \ \mu_{i}\left( \left(u_{i}^{\ast},v_{i}^{\ast} \right);\gamma_{i}\right)+{\displaystyle \sum^{p}_{j=1}} \ \nu_{i}\left( \left(u_{j}^{\ast},v_{j}^{\ast} \right);\gamma_{j}\right)+\left( \left(s^{\ast},t^{\ast}\right) ;\sigma \right).
\end{equation*}
Or equivalently,
\begin{equation}\label{expression1}
\begin{array}{rcl}
 \left( x^{\ast},0\right) & = & \left(z^{\ast},q^{\ast} \right)+{\displaystyle \sum^{k}_{i=1}} \ \mu_{i} \left(u_{i}^{\ast},v_{i}^{\ast} \right)+{\displaystyle \sum^{p}_{j=1}} \ \nu_{i}\left(u_{j}^{\ast},v_{j}^{\ast} \right)+\left(s^{\ast},t^{\ast}\right),\\
 \zeta +\varphi\left( x^{\ast}\right) & = &\gamma +{\displaystyle \sum^{k}_{i=1}} \ \mu_{i}\gamma_{i}+{\displaystyle \sum^{p}_{j=1}} \ \nu_{j}\gamma_{j}+\sigma .
\end{array}
\end{equation}
Using the definition of epigraph, we obtain
\begin{equation*}
	(\lambda^{-1}F+f)^{*}\left(z^{\ast},q^{\ast} \right)+{\displaystyle \sum^{k}_{i=1}} \ \mu_{i}G^{\ast}_{i}\left(u_{i}^{\ast},v_{i}^{\ast} \right)+{\displaystyle \sum^{p}_{j=1}} \ \nu_{j}g^{\ast}_{j}\left(u_{j}^{\ast},v_{j}^{\ast} \right)+\delta^{*}_{\mathbb{R}^{n}\times\mathbb{R}^{m} }\left(s^{\ast},t^{\ast}\right)\leq \zeta +\varphi\left( x^{\ast}\right).
\end{equation*}
Hence,
\begin{equation*}
	 -\zeta \leq \varphi\left( x^{\ast}\right)-(\lambda^{-1}F+f)^{*}\left(z^{\ast},q^{\ast} \right)-{\displaystyle \sum^{k}_{i=1}} \ \mu_{i}G^{\ast}_{i}\left(u_{i}^{\ast},v_{i}^{\ast} \right)-{\displaystyle \sum^{p}_{j=1}} \ \nu_{j}g^{\ast}_{j}\left(u_{j}^{\ast},v_{j}^{\ast} \right)-\delta^{*}_{\mathbb{R}^{n}\times\mathbb{R}^{m} }\left(s^{\ast},t^{\ast}\right).
\end{equation*}
Using the first equality in $\left( \ref{expression1}\right) $, we get ${\mathcal V}\eqref{form-3}\geqslant-\zeta$. Then ${\mathcal V}\eqref{calm3}\geqslant {\mathcal V}\eqref{form-3}\geqslant - \zeta$, which implies that $(0,\zeta)\in\operatorname{epi}\left(\lambda^{-1}F+f-\varphi+\delta_{\mathcal{S}}\right)^{*} $ by $\left( \ref{lemme-i}\right) $. 
 Then the family $\left( (\lambda^{-1}F+f),\varphi,\delta_{\mathbb{R}^{n}\times\mathbb{R}^{m}}, G_{i},g_{j}\right)$, $i=1,\dots,k$, $j=1,\dots,p $ satisfies (SFRC).
\end{proof}

\begin{theorem}[Strong Fenchel–Lagrange duality]\label{S.d}
	Let $\lambda>0$ and $F,f,G_{i},g_{j},i=1,\dots,k,j=1,\dots,p$ be proper convex functions. The family $\left((\lambda^{-1}F+f),\varphi,\delta_{\mathbb{R}^{n}\times\mathbb{R}^{m}},G_{i},g_{j}\right)$, $i=1,\dots,k$, $j=1,\dots,p $ satisfies (FRC) if and only if the strong duality between \eqref{calm3} and \eqref{form-3} holds.
\end{theorem}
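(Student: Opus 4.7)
The plan is to bootstrap Theorem \ref{w.d} by exploiting that (FRC) is just the equality version of (SFRC). Since (FRC) implies (SFRC), weak duality ${\mathcal V}\eqref{calm3}\geq{\mathcal V}\eqref{form-3}$ follows for free from Theorem \ref{w.d}, so it is enough to (i) upgrade this inequality to equality and (ii) produce the explicit dual maximisers required by Remark \ref{r.s.d}. The extra information carried by the reverse inclusion in (FRC) is exactly what converts an epigraph membership coming from the primal value into a concrete decomposition that serves as a dual witness, and conversely.

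For the forward implication, I would assume (FRC) and, setting $\zeta_{0}:=-{\mathcal V}\eqref{calm3}$ (the case $\zeta_{0}=+\infty$ being handled trivially by weak duality), use the identity ${\mathcal V}\eqref{calm3}=-(\lambda^{-1}F+f-\varphi+\delta_{\mathcal S})^{\ast}(0)$ recalled in the proof of Theorem \ref{w.d} to obtain $(0,\zeta_{0})\in\operatorname{epi}(\lambda^{-1}F+f-\varphi+\delta_{\mathcal S})^{\ast}\cap(\{0_{\mathbb{R}^{n}\times\mathbb{R}^{m}}\}\times\mathbb{R})$. The (FRC) equality relocates this point to $\mathcal{R}$, and for every $x^{\ast}\in\operatorname{dom}\varphi^{\ast}$ the definition of $\mathcal{R}$ supplies a decomposition of $((x^{\ast},0);\zeta_{0}+\varphi^{\ast}(x^{\ast}))$ into elements of $\Gamma$, $\operatorname{cone}(\Lambda\cup\Pi)$ and $\operatorname{epi}\delta^{\ast}_{\mathbb{R}^{n}\times\mathbb{R}^{m}}$. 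Reading off the coordinates exactly as in the converse part of Theorem \ref{w.d} yields admissible $(z^{\ast},q^{\ast})$, $(u_{i}^{\ast},v_{i}^{\ast})$, $(u_{j}^{\ast},v_{j}^{\ast})$, $\mu_{i}\geq 0$, $\nu_{j}\geq 0$ and $(s^{\ast},t^{\ast})$ for which $\psi(x^{\ast},\ldots)-\delta^{\ast}_{\mathbb{R}^{n}\times\mathbb{R}^{m}}(s^{\ast},t^{\ast})\geq -\zeta_{0}={\mathcal V}\eqref{calm3}$. Combined with weak duality this forces ${\mathcal V}\eqref{form-3}={\mathcal V}\eqref{calm3}$ and, via Remark \ref{r.s.d}, delivers the attainment.

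For the converse, I would assume strong duality. Theorem \ref{w.d} already hands over the inclusion $\mathcal{R}\cap(\{0_{\mathbb{R}^{n}\times\mathbb{R}^{m}}\}\times\mathbb{R})\subseteq\operatorname{epi}(\lambda^{-1}F+f-\varphi+\delta_{\mathcal S})^{\ast}\cap(\{0_{\mathbb{R}^{n}\times\mathbb{R}^{m}}\}\times\mathbb{R})$, so only the reverse inclusion is needed. Given $(0,\zeta)$ on the right-hand side, one has $-\zeta\leq{\mathcal V}\eqref{calm3}={\mathcal V}\eqref{form-3}$. Fixing $x^{\ast}\in\operatorname{dom}\varphi^{\ast}$, Remark \ref{r.s.d} supplies a concrete tuple $(z^{\ast},q^{\ast},u^{\ast},v^{\ast},\alpha,\beta,s^{\ast},t^{\ast})$ realising the inner supremum at a value $\geq{\mathcal V}\eqref{form-3}\geq -\zeta$. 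Reassembling these coordinates through the linear coupling relations used in the first half of Theorem \ref{w.d} expresses $((x^{\ast},0);\zeta+\varphi^{\ast}(x^{\ast}))$ as a sum from $\Gamma+\operatorname{cone}(\Lambda\cup\Pi)+\operatorname{epi}\delta^{\ast}_{\mathbb{R}^{n}\times\mathbb{R}^{m}}$, which, because $x^{\ast}$ is arbitrary, places $(0,\zeta)$ in $\mathcal{R}$.

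The hardest part is really bookkeeping rather than analysis: one must keep track of the coupling constraints $s^{\ast}=x^{\ast}-z^{\ast}-\sum_{i}\mu_{i}u_{i}^{\ast}-\sum_{j}\nu_{j}u_{j}^{\ast}$ and $t^{\ast}=-q^{\ast}-\sum_{i}\mu_{i}v_{i}^{\ast}-\sum_{j}\nu_{j}v_{j}^{\ast}$ while translating between membership in $\mathcal{R}$ and membership in $\operatorname{epi}(\lambda^{-1}F+f-\varphi+\delta_{\mathcal S})^{\ast}$. This is precisely the computation carried out in both halves of the proof of Theorem \ref{w.d}, so the present theorem reduces to invoking that computation twice with the stronger equality now available in (FRC).
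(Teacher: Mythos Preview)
Your proposal is correct and follows essentially the same route as the paper's own proof: both directions bootstrap Theorem \ref{w.d}, use the identification ${\mathcal V}\eqref{calm3}=-(\lambda^{-1}F+f-\varphi+\delta_{\mathcal S})^{\ast}(0)$ together with \eqref{lemme-i} to pass between epigraph membership and the value inequality, and invoke Remark \ref{r.s.d} for the attainment characterisation. Your handling of the converse is in fact slightly more explicit than the paper's, since you spell out that the (SFRC) inclusion is recovered from strong duality via the weak-duality equivalence in Theorem \ref{w.d}, whereas the paper leaves this step implicit.
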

\begin{proof}
	Suppose that the family $\left((\lambda^{-1}F+f),\varphi,\delta_{\mathbb{R}^{n}\times\mathbb{R}^{m}},G_{i},g_{j}\right)$, $i=1,\dots,k$, $j=1,\dots,p $ satisfies $(F R C)$. Then, it satisfies $(S F R C)$. By Theorem $\ref{w.d}$ , ${\mathcal V}\eqref{calm3} \geqslant {\mathcal V}\eqref{form-3}$. Now, If ${\mathcal V}\eqref{calm3}=-\infty$, then we get strong duality for \eqref{calm3} and \eqref{form-3} via weak duality. So, we assume that ${\mathcal V}\eqref{calm3}=-\zeta \in \mathbb{R}$. Then, from $\left( \ref{lemme-i}\right) $,
	$$
	(0, \zeta) \in \operatorname{epi}\left(\lambda^{-1}F+f-\varphi+\delta_{\mathcal{S}}\right)^{*}.
	$$
	Since the family $\left((\lambda^{-1}F+f),\varphi,\delta_{\mathbb{R}^{n}\times\mathbb{R}^{m}},G_{i},g_{j}\right)$, $i=1,\dots,k$, $j=1,\dots,p $ satisfies $(F R C)$, we have that $(0, \zeta) \in \mathcal
	R$. Thus, from the proof of Theorem \ref{w.d}, ${\mathcal V}\eqref{form-3} \geqslant-\zeta$ and for any $x^{*} \in \operatorname{dom} \varphi^{*}$, there exist $\left( \left(z^{\ast},q^{\ast} \right);\gamma\right)\in \operatorname{epi}(\lambda^{-1}F+f)^{*}$, $\left( \left(u_{i}^{\ast},v_{i}^{\ast} \right);\gamma_{i}\right)\in \operatorname{epi}G_{i}^{*}$, $i=1,\cdots ,k$, $\left( \left(u_{j}^{\ast},v_{j}^{\ast} \right);\gamma_{j}\right)\in \operatorname{epi}g_{j}^{*}$, $j=1,\cdots ,p$, $\mu_{i}\in\mathbb{R}_{+}$, $i=1,\cdots ,k$, $\nu_{j}\in\mathbb{R}_{+}$, $j=1,\cdots ,p$, and $\left( \left(s^{\ast},t^{\ast}\right) ;\sigma \right) \in \operatorname{epi}\delta^{*}_{\mathbb{R}^{n}\times\mathbb{R}^{m} }$ such that
	\begin{equation*}
		-\zeta \leq \varphi\left( x^{\ast}\right)-(\lambda^{-1}F+f)^{*}\left(z^{\ast},q^{\ast} \right)-{\displaystyle \sum^{k}_{i=1}} \ \mu_{i}G^{\ast}_{i}\left(u_{i}^{\ast},v_{i}^{\ast} \right)-{\displaystyle \sum^{p}_{j=1}} \ \nu_{j}g^{\ast}_{j}\left(u_{j}^{\ast},v_{j}^{\ast} \right)-\delta^{*}_{\mathbb{R}^{n}\times\mathbb{R}^{m} }\left(s^{\ast},t^{\ast}\right).
	\end{equation*}	
	By Remark $\ref{r.s.d}$, we obtain the strong duality between \eqref{calm3} and \eqref{form-3}.

	Conversely, assuming that strong duality between \eqref{calm3} and \eqref{form-3} holds, then by Remark $\ref{r.s.d}$, we have the equality ${\mathcal V}\eqref{calm3}={\mathcal V}\eqref{form-3}$ and for any $x^{*} \in \operatorname{dom} \varphi^{*}$, there exist $\left(z^{\ast},q^{\ast} \right)\in \operatorname{dom}(\lambda^{-1}F+f)^{*}$, $\left(u_{i}^{\ast},v_{i}^{\ast} \right)\in \operatorname{dom}G_{i}^{*}$, $i=1,\cdots ,k$, $\left(u_{j}^{\ast},v_{j}^{\ast} \right)\in \operatorname{dom}g_{j}^{*}$, $j=1,\cdots ,p$, $\mu_{i}\in\mathbb{R}_{+}$, $i=1,\cdots ,k$, $\nu_{j}\in\mathbb{R}_{+}$, $j=1,\cdots ,p$ and $\left(s^{\ast},t^{\ast}\right) \in \mathbb{R}^{n}\times\mathbb{R}^{m}$ such that we have
	\begin{equation*}
		\varphi\left( x^{\ast}\right)-(\lambda^{-1}F+f)^{*}\left(z^{\ast},q^{\ast} \right)-{\displaystyle \sum^{k}_{i=1}} \ \mu_{i}G^{\ast}_{i}\left(u_{i}^{\ast},v_{i}^{\ast} \right)-{\displaystyle \sum^{p}_{j=1}} \ \nu_{j}g^{\ast}_{j}\left(u_{j}^{\ast},v_{j}^{\ast} \right)-\delta^{*}_{\mathbb{R}^{n}\times\mathbb{R}^{m} }\left(s^{\ast},t^{\ast}\right) \geqslant {\mathcal V}\eqref{form-3},
	\end{equation*}
	with $s^{\ast}= x^{\ast}- z^{*}-\displaystyle\sum_{i=1}^{k} \alpha_{i} u_{i}^{*}-\sum_{j=1}^{p} \beta_{j}u_{j}^{*}$, and $t^{\ast}=-q^{\ast}-\displaystyle\sum_{i=1}^{k} \alpha_{i} v_{i}^{*}-\sum_{j=1}^{p} \beta_{j} v_{j}^{*}$.\\
Let $(0, \zeta) \in \operatorname{epi}\left(\lambda^{-1}F+f-\varphi+\delta_{\mathcal{S}}\right)^{*}$. By $\left( \ref{lemme-i}\right) $, ${\mathcal V}\eqref{calm3} \geqslant-\zeta$. Then, from the strong duality, ${\mathcal V}\eqref{form-3}={\mathcal V}\eqref{calm3} \geqslant-\zeta$. By the proof of Theorem \ref{w.d}, $(0, \zeta) \in \mathcal{R}$ and hence, the result.% The proof is complete.
\end{proof}

 \subsection{Duality under the closedness condition}
 In the above subsections, the Slater condition, (FRC), and (SFRC) are used. However, these conditions are
 often not satisfied for many problems in applications. To overcome this drawback, we use another constraint qualification, labeled as ``closedness condition'', which has been  developed and used for convex (infinite) optimization problems. In this case, a Toland-Fenchel-Lagrange duality is  investigated.  To proceed, let us assume that $\mathcal{S}\cap\operatorname{dom}(\lambda^{-1}F+ f)\neq\emptyset$ and $F$, $f$, and $\varphi$ are functions such that $F$ and $f$ are proper lower semicontinuous (l.s.c.) and convex, while $\varphi$ is proper convex and satisfies $\varphi^{**}=\varphi$. Here, $\mathcal{S}$ is the feasible set of \eqref{calm3} stated in $\left( \ref{eq4}\right)$.

 We denote ${\mathcal A}$ by
 \begin{center}
 	$\mathcal{A}:=\underset{\alpha\in \mathbb{R}_{+}^{k}}{\bigcup}\operatorname{epi}(\alpha G)^{*}\bigcup\underset{\beta\in \mathbb{R}_{+}^{p}}{\bigcup}\operatorname{epi}(\beta g)^{*}+\operatorname{epi}\delta^{*}_{\mathbb{R}^{n}\times\mathbb{R}^{m} },$
 \end{center}
 and introduce the  closedness condition defined by
 \begin{equation}\label{ccondi}\tag{CC}
 \operatorname{epi}(\lambda^{-1}F+ f)^{*}+\mathcal{A} \ \text{ is closed}.
 \end{equation}
 The closedness condition \eqref{ccondi} can be traced to Burachik and Jeyakumar \cite{ref5} and was used later to derive optimality conditions for convex cone constrained optimization problems. Different types of sufficient conditions for  \eqref{ccondi} are given in \cite{ref5}.

We are interested here in another form of duality called Tolland-Fenchel-Lagrange duality, which takes the following form:
\begin{equation}\label{cc-dual}\tag{D$_{TFL}^{\lambda}$}
\begin{array}{l}
  \inf_{(x^{*},y^{*})\in\mathbb{R}^{n}\times\mathbb{R}^{m}} \ \underset{ \underset{ \left( \alpha,\beta \right)\in\mathbb{R}_{+}^{k+p}}  {\left( z^{\ast},q^{\ast}\right)\in \mathbb{R}^{n+m} } }{\max}
\  \left\{{\varphi}^{*}(x^{*})+\delta_{\mathbb{R}^{m}}^{\ast}\left( y^{\ast}\right)\right.\\
 \qquad \qquad \qquad -\left.\left(\lambda^{-1}F+ f\right)^{*}\left(z^{\ast},q^{\ast} \right)- \left( \alpha G+ \beta g\right)^{\ast} \left( x^{\ast}-z^{\ast},y^{\ast}-q^{\ast}\right)\right\}.
\end{array}
\end{equation}

\begin{theorem} Consider the primal and the dual problem \eqref{calm3} and \eqref{cc-dual}. Suppose the constaint qualification \eqref{ccondi} holds. Then, the strong duality holds; i.e., ${\mathcal V} \eqref{calm3}={\mathcal V}\eqref{cc-dual}$.
\end{theorem}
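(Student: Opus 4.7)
The plan is to combine a Toland-type reformulation of the d.c.~primal with the epigraph decomposition delivered by the closedness condition \eqref{ccondi}. First, I would extend $\varphi$ to the full space $\mathbb{R}^n\times\mathbb{R}^m$ by setting $\tilde\varphi(x,y):=\varphi(x)$. A direct calculation gives $\tilde\varphi^{\ast}(x^{\ast},y^{\ast})=\varphi^{\ast}(x^{\ast})+\delta^{\ast}_{\mathbb{R}^m}(y^{\ast})$, which accounts for the otherwise puzzling second term appearing in \eqref{cc-dual}, and the assumption $\varphi^{\ast\ast}=\varphi$ lifts to $\tilde\varphi^{\ast\ast}=\tilde\varphi$. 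Writing $H:=\lambda^{-1}F+f+\delta_{\mathcal S}$, which is proper, convex, and lower semicontinuous under the standing hypotheses, the primal value can then be rewritten as
\begin{align*}
\mathcal{V}\eqref{calm3}
&=\inf_{(x,y)}\bigl\{H(x,y)-\tilde\varphi(x,y)\bigr\}
 =\inf_{(x,y)}\bigl\{H(x,y)-\tilde\varphi^{\ast\ast}(x,y)\bigr\}\\
&=\inf_{(x^{\ast},y^{\ast})}\bigl\{\tilde\varphi^{\ast}(x^{\ast},y^{\ast})-H^{\ast}(x^{\ast},y^{\ast})\bigr\},
\end{align*}
by inserting the Fenchel representation of $\tilde\varphi$ and swapping the two infima. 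This is the standard Toland formula for minimizing a d.c.~function and already matches the outer infimum of \eqref{cc-dual}.

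Next, I would activate the closedness condition \eqref{ccondi} to decompose $H^{\ast}$. Under \eqref{ccondi}, the set $\operatorname{epi}(\lambda^{-1}F+f)^{\ast}+\mathcal A$ is closed and, by the Burachik--Jeyakumar type argument in \cite{ref5}, actually coincides with $\operatorname{epi}H^{\ast}$. Reading this epigraph identity off pointwise yields, for every $(x^{\ast},y^{\ast})\in\mathbb{R}^n\times\mathbb{R}^m$,
\[
H^{\ast}(x^{\ast},y^{\ast})=\min_{\substack{\alpha\in\mathbb{R}_+^k,\ \beta\in\mathbb{R}_+^p\\ (z^{\ast},q^{\ast})\in\mathbb{R}^{n+m}}}\Bigl\{(\lambda^{-1}F+f)^{\ast}(z^{\ast},q^{\ast})+(\alpha G+\beta g)^{\ast}(x^{\ast}-z^{\ast},y^{\ast}-q^{\ast})\Bigr\},
\]
with the minimum attained precisely because the relevant epigraph sum is closed.

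Substituting this representation of $H^{\ast}$ into the Toland reformulation from the first step converts $-H^{\ast}$ into a supremum that is in fact a maximum, and yields
\begin{align*}
\mathcal{V}\eqref{calm3}
&=\inf_{(x^{\ast},y^{\ast})}\max_{\substack{(z^{\ast},q^{\ast})\\ (\alpha,\beta)}}\Bigl\{\varphi^{\ast}(x^{\ast})+\delta^{\ast}_{\mathbb{R}^m}(y^{\ast})\\
&\qquad\qquad\qquad -(\lambda^{-1}F+f)^{\ast}(z^{\ast},q^{\ast})-(\alpha G+\beta g)^{\ast}(x^{\ast}-z^{\ast},y^{\ast}-q^{\ast})\Bigr\}
=\mathcal{V}\eqref{cc-dual},
\end{align*}
which is the asserted strong duality. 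The main obstacle in executing this plan is the second step: one has to argue carefully that \eqref{ccondi} really delivers the \emph{exact} epigraph identity $\operatorname{epi}H^{\ast}=\operatorname{epi}(\lambda^{-1}F+f)^{\ast}+\mathcal A$, including that the union-cone structure hidden in $\mathcal A$ assembles the constraint conjugates into the single composite $(\alpha G+\beta g)^{\ast}$ appearing in \eqref{cc-dual} and that the outer infimum in the decomposition is attained, not merely approached. Once this is in place, the remaining manipulations are routine Fenchel--conjugacy bookkeeping.
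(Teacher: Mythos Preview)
Your proposal is correct and follows essentially the same two-step architecture as the paper: first apply Toland duality to rewrite $\mathcal{V}\eqref{calm3}$ as $\inf_{(x^{\ast},y^{\ast})}\{\varphi^{\ast}(x^{\ast})+\delta^{\ast}_{\mathbb{R}^m}(y^{\ast})-(\lambda^{-1}F+f+\delta_{\mathcal S})^{\ast}(x^{\ast},y^{\ast})\}$, then use the closedness condition \eqref{ccondi} to decompose $(\lambda^{-1}F+f+\delta_{\mathcal S})^{\ast}$ into the attained minimum appearing in \eqref{cc-dual}. The only cosmetic difference is in the second step: where you invoke the Burachik--Jeyakumar epigraph identity from \cite{ref5} directly, the paper introduces an explicit perturbation function $\psi$, computes $\psi^{\ast}$, observes that $Pr_{\mathbb{R}^{n}\times\mathbb{R}^{m}\times\mathbb{R}}(\operatorname{epi}\psi^{\ast})=\operatorname{epi}(\lambda^{-1}F+f)^{\ast}+\mathcal{A}$, and then appeals to \cite[Theorem~3.1]{Wu100} to obtain the attained decomposition---this perturbation route is precisely the mechanism that resolves the obstacle you flagged about attainment and the assembly of $(\alpha G+\beta g)^{\ast}$.
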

\begin{proof}
By definition of a primal problem,
\begin{equation*}
{\mathcal V}\eqref{calm3}=\inf _{(x,y) \in\mathbb{R}^{n}\times\mathbb{R}^{m} }\left(\lambda^{-1}F+ f+\delta_{\mathcal{S}}-{\varphi}\right)(x,y).
\end{equation*}
Using the classical Toland dulality, we obtain
\begin{equation}\label{eq8}
{\mathcal V}(\mathcal{P}^{\lambda})=
\inf_{(x^{*},y^{*})\in\mathbb{R}^{n}\times\mathbb{R}^{m}}\left\{{\varphi}^{*}(x^{*})+\delta_{\mathbb{R}^{m}}^{\ast}\left( y^{\ast}\right) -\left(\lambda^{-1}F+f+\delta_{\mathcal{S}}\right)^{*}\left(x^{*},y^{*}\right)\right\}.
\end{equation}
Now, consider the function $\psi :\mathbb{R}^{n+m}\times\mathbb{R}^{n+m}\times\mathbb{R}^{k+p}\rightarrow \mathbb{R}\cup \{+\infty\}$ given by
\begin{equation*}
\psi\left(x,y,z,q,\nu ,\mu\right) =
\left\{
\begin{array}{lcl}
\left( \lambda^{-1}F+ f\right) \left(x+z,y+q\right), & \ & \text{if} \ \left( G\left( x,y\right) ,g\left( x,y\right)\right)  + \left( \nu ,\mu\right)  \in -\mathbb{R}_{+}^{k+p}, \\
+\infty & \ & \text{otherwise}.
\end{array}
\right.
\end{equation*}
Let $\left(x^{\ast},y^{\ast},z^{\ast},q^{\ast},\alpha,\beta \right) \in \mathbb{R}^{n+m}\times\mathbb{R}^{n+m}\times\mathbb{R}^{k+p}$. Using the same technique as in \cite{refrev0}, we calculate the conjugate of $\psi$ as
\begin{equation*}
	\psi^{\ast}\left(x^{\ast},y^{\ast},z^{\ast},q^{\ast},\alpha,\beta \right)=\left(\lambda^{-1}F+ f\right)^{*}\left(z^{\ast},q^{\ast}\right)+ \left( \alpha G+ \beta g\right)^{\ast} \left( x^{\ast}-z^{\ast},y^{\ast}-q^{\ast}\right) +\delta_{-\mathbb{R}_{+}^{k+p}}^{\ast}\left( \alpha,\beta\right).
\end{equation*}
Observing that $\delta_{-\mathbb{R}_{+}^{k+p}}^{\ast}\left( \alpha,\beta \right)=0$ if $\left( \alpha,\beta\right) \in \mathbb{R}_{+}^{k+p}$, one deduces from the last equality that
\begin{equation*}
\begin{array}{l}
  	\psi^{\ast}\left(x^{\ast},y^{\ast},z^{\ast},q^{\ast},\alpha,\beta  \right)=\\
	\qquad \qquad \left\{
	\begin{array}{lcl}
	\left(\lambda^{-1}F+ f\right)^{*}\left(z^{\ast},q^{\ast} \right)+ \left( \alpha G+ \beta g\right)^{\ast} \left( x^{\ast}-z^{\ast},y^{\ast}-q^{\ast}\right) & \ & \text{if} \  \left( \alpha,\beta\right)\in \mathbb{R}_{+}^{k+p},\\
	+\infty & \ & \text{otherwise}.
		\end{array}
		\right.
\end{array}
\end{equation*}
On the other hand, we have
\begin{equation}\label{cc-p1}
\begin{array}{lcl}
\left(\lambda^{-1}F+f+\delta_{\mathcal{S}}\right)^{*}\left(x^{*},y^{*}\right) & = & {\displaystyle \sup_{\left( x,y\right) \in {\mathcal S}}} \ \{ \langle \left( x,y\right) ,\left( x^{\ast},y^{\ast}\right) \rangle-\left(\lambda^{-1}F+f\right)\left( x,y\right)  \} \\
	& = &  {\displaystyle \sup_{\left( x,y\right) \in \mathbb{R}^{n}\times\mathbb{R}^{m}}} \ \{ \langle \left( x,y\right) ,\left( x^{\ast},y^{\ast}\right) \rangle-\psi\left( x,y,0,0,0,0\right)  \}.
\end{array}
\end{equation}
Since, $Pr_{\mathbb{R}^{n}\times\mathbb{R}^{m}\times \mathbb{R}}\left( \text{epi} \ \psi^{\ast}\right) =\operatorname{epi}(\lambda^{-1}F+ f)^{*}+\mathcal{A}$, it follows from \cite[Theorem 3.1]{Wu100} that for all $\left( x^{\ast},y^{\ast}\right) \in \mathbb{R}^{n}\times\mathbb{R}^{m}$,
\begin{equation}\label{cc-p2}
\begin{array}{l}
{\displaystyle \sup_{\left( x,y\right) \in \mathbb{R}^{n}\times\mathbb{R}^{m}}} \ \{ \langle \left( x,y\right) ,\left( x^{\ast},y^{\ast}\right) \rangle-\psi\left( x,y,0,0,0,0\right)  \}\\
\qquad \qquad \qquad \qquad \qquad \quad  ={\displaystyle \min_{\left( z^{\ast},q^{\ast}\right)\in \mathbb{R}^{n+m}, \left( \alpha,\beta \right)\in\mathbb{R}^{k+p} }} \ \psi^{\ast}\left(x^{\ast},y^{\ast},z^{\ast},q^{\ast},\alpha,\beta \right).
\end{array}
\end{equation}
Combining $\left( \ref{cc-p1}\right)$ and $\left( \ref{cc-p2}\right)$, we arrive at
\begin{equation*}
\begin{array}{l}
  \left(\lambda^{-1}F+f+\delta_{\mathcal{S}}\right)^{*}\left(x^{*},y^{*}\right) \\
  \qquad \qquad \qquad = {\displaystyle \min_{\left( z^{\ast},q^{\ast}\right)\in \mathbb{R}^{n+m}, \left( \alpha^{\ast},\beta^{\ast} \right)\in\mathbb{R}_{+}^{k+p} }} \ \left(\lambda^{-1}F+ f\right)^{*}\left(z^{\ast},q^{\ast} \right)+ \left( \alpha^{\ast} G+ \beta^{\ast} g\right)^{\ast} \left( x^{\ast}-z^{\ast},y^{\ast}-q^{\ast}\right).
\end{array}	
\end{equation*}
Inserting the latter equality in $\left( \ref{eq8}\right)$ we obtain
\begin{equation*}
\begin{array}{l}
  {\mathcal V}\eqref{calm3}=
\inf_{(x^{*},y^{*})\in\mathbb{R}^{n}\times\mathbb{R}^{m}} \
\underset{ \underset{ \left( \alpha,\beta \right)\in\mathbb{R}_{+}^{k+p}}  {\left( z^{\ast},q^{\ast}\right)\in \mathbb{R}^{n+m} } }{\max}
 \  \left\{{\varphi}^{*}(x^{*})+\delta_{\mathbb{R}^{m}}^{\ast}\left( y^{\ast}\right)\right. \\
 \qquad \qquad \qquad -\left.\left(\lambda^{-1}F+ f\right)^{*}\left(z^{\ast},q^{\ast} \right)- \left( \alpha G+ \beta g\right)^{\ast} \left( x^{\ast}-z^{\ast},y^{\ast}-q^{\ast}\right)\right\}.
\end{array}
\end{equation*}
Hence,
${\mathcal V}\eqref{calm3}={\mathcal V}\eqref{cc-dual}.$
\end{proof}

%%%%%%%%%%%%%%%%%%%%%%%%%%%%%%%%%%%%%%%%%%%%%%%%%%%%%%%%%%%%%%%%%%%%%%%%%%%%%%%%%%%%%%%%%%%%%%%%%%%%%%%%%%%%%%%%%%%%%%%%%%%%%%%%%%%%%%%%%%%%%%%%%%%%%%%%
%%%%%%%%%%%%%%%%%%%%%%%%%%%%%%%%%%%%%%%%%%%%%%%%%%%%%%%%%%%%%%%%%%%%%%%%%%%%%%%%%%%%%%%%%%%%%%%%%%%%%%%%%%%%%%%%%%%%%%%%%%%%%%%%%%%%%%%%%%%%%%%%%%%%%%
\section{Duality without partial calmness}
\subsection{Under a generalized Slater condition}
In this section, we do not assume that \eqref{calm0} has optimal solutions, also no convexity assumption is imposed. Let $\epsilon > 0$ and consider the following reguralized problem of \eqref{calm0}
\begin{equation}\label{reg-bil}\tag{P$_{\epsilon}$}
{\displaystyle \min_{x,y}} \ F\left( x,y\right) \ \text{s.t.} \ G_{i}\left(x, y\right) \leq 0, \; \; i\in I_{k} := \{1,\cdots ,k\}, \; \; y\in {\mathcal S}_{\epsilon}\left( x\right),
\end{equation}
where for each $x\in \mathbb{R}^{n}$, ${\mathcal S}_{\epsilon}\left(x\right)$ is the set of $\epsilon$-approximate optimal
solutions of the lower level problem \eqref{llp-1}; that is
\begin{equation*}
{\mathcal S}_{\epsilon}\left( x\right) =\{y\in K\left( x\right)| \;\; f\left( x,y\right) -\varphi\left( x\right) \leq \epsilon  \}.
\end{equation*}
Then, it is clear that problem \eqref{reg-bil} can be reformulated as the following single-level optimization problem involving the optimal value function:
\begin{equation}\label{w-p-c-P}\tag{LLVF$_{\epsilon}$}
\left\{\begin{array}{ll}
{\displaystyle \min_{x, y}} \ F(x, y) \\[1ex]
G_{i}(x,y) \leq 0 \quad & \forall i \in I_{k}=\{1,\dots,k\}, \\
g_{j}(x, y) \leq 0 & \forall j \in I_{p}=\{1,\dots,p\}, \\
f(x, y)-\varphi(x) \leq \epsilon. \\
\end{array}\right.
\end{equation}
Next, we recall the relation between any accumulation point of a sequence of regularized
optimal solutions (i.e., of problem \eqref{w-p-c-P}) and the optimal solution of \eqref{calm0}. To this end, let  $\left(\epsilon_{k}\right)_{k}$ be a sequence of positive scalar such that $\epsilon_{k}\searrow 0$, we denote problem $\left(\mbox{LLVF}_{\epsilon_{k}}\right)$ by $\left(\mbox{LLVF}_{k}\right)$. We need the following Slater-type condition for the lower level problem:
\begin{assum}\label{regul}
	For any $x\in \mathbb{R}^{n}$ satisfying the upper-level inequality constraints, and any nonempty subset $J_{p}$ of $I_{p}$, there exist $y\in\mathbb{R}^{m}$ such that $g_{j}\left( x,y\right) < 0$ for all $j\in J_{p}$.
\end{assum}
\begin{theorem}\cite{refextend}
	Let  $\left(  \epsilon_{k}\right)_{k}$ a sequence of positive scalar such that $\epsilon_{k}\searrow 0$ and $\left( x_{k},y_{k}\right)_{k}$ be a sequence of optimal solutions of the regularized problems $\left(\mbox{LLVF}_{k}\right)$ with $k\in\mathbb{N}$. Suppose that
	\begin{enumerate}
		\item ${\mathcal C} :=\{\left( x,y\right) \in\mathbb{R}^{n}\times\mathbb{R}^{m}|\;\; G_{i}\left( x,y\right) \leq 0, \; \; i\in I_{k}\}$ is a compact set;
		\item $\left( \bar{x},\bar{y}\right)$ is an accumulation point of the sequence $\left( x_{k},y_{k}\right)_{k}$;
		\item Assumption \ref{regul}  holds.
	\end{enumerate}
	Then, $\left( \bar{x},\bar{y}\right)$ is an optimal solution of problem \eqref{llvf}.
\end{theorem}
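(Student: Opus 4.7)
The plan is to establish the claim in two stages. First I extract from $(x_k,y_k)_k$ a subsequence (which I still index by $k$ for simplicity) converging to $(\bar{x},\bar{y})$; its existence uses the compactness of $\mathcal{C}$ together with hypothesis (2). Then I show that $(\bar{x},\bar{y})$ is feasible for \eqref{llvf}, and finally that it realizes the optimal value of \eqref{llvf}.

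For feasibility, the upper-level inequalities $G_i(\bar{x},\bar{y})\leq 0$ follow from closedness of $\mathcal{C}$, and the lower-level inequalities $g_j(\bar{x},\bar{y})\leq 0$ follow from the lower semicontinuity of the convex functions $g_j$. The delicate requirement is the value-function constraint $f(\bar{x},\bar{y})-\varphi(\bar{x})\leq 0$. From the $\epsilon_k$-feasibility of $(x_k,y_k)$ in $(\mbox{LLVF}_k)$ we have $f(x_k,y_k)\leq \varphi(x_k)+\epsilon_k$, so one needs to pass to the limit on both sides. Lower semicontinuity of $f$ gives $f(\bar{x},\bar{y})\leq \liminf_k f(x_k,y_k)$. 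This is where Assumption~\ref{regul} enters: the generalized Slater-type condition on the lower-level problem forces the set-valued map $K$ to be inner semicontinuous at $\bar{x}$, which in turn yields upper semicontinuity of $\varphi$ at $\bar{x}$, hence $\limsup_k \varphi(x_k)\leq \varphi(\bar{x})$. Combining these bounds with $\epsilon_k\searrow 0$ yields $f(\bar{x},\bar{y})\leq \varphi(\bar{x})$, which is exactly the value-function constraint.

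For optimality, pick any $(x,y)$ feasible for \eqref{llvf}. Then $f(x,y)-\varphi(x)\leq 0\leq \epsilon_k$ for every $k$, so $(x,y)$ is feasible for every regularized problem $(\mbox{LLVF}_k)$. Optimality of $(x_k,y_k)$ for $(\mbox{LLVF}_k)$ gives $F(x_k,y_k)\leq F(x,y)$, and lower semicontinuity of $F$ along the converging subsequence yields $F(\bar{x},\bar{y})\leq \liminf_k F(x_k,y_k)\leq F(x,y)$. Since $(x,y)$ was an arbitrary feasible point of \eqref{llvf}, optimality follows.

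The main obstacle is the upper semicontinuity of $\varphi$ at $\bar{x}$. In general, $\varphi$ is only lower semicontinuous even when all lower-level data are nicely convex, so passing to the limit in $f(x_k,y_k)\leq \varphi(x_k)+\epsilon_k$ is precisely where the argument is at risk; Assumption~\ref{regul} is calibrated to avoid this degeneracy by ruling out any collapse of $K(x)$ as $x\to \bar{x}$. Everything else reduces to routine passages to the limit exploiting compactness of $\mathcal{C}$ and semicontinuity of the data, so this upper-semicontinuity step is the crux.
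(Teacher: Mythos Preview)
The paper does not supply its own proof of this theorem; it is quoted from \cite{refextend} and stated without argument. Your proof is the standard convergence argument for this type of regularization result and coincides with what one finds in the cited source: feasibility of the limit point is obtained from closedness of $\mathcal{C}$, lower semicontinuity of the $g_j$ and $f$, and upper semicontinuity of $\varphi$ at $\bar{x}$ (the latter being secured by Assumption~\ref{regul} via inner semicontinuity of the lower-level feasible map $K$); optimality follows because the feasible set of \eqref{llvf} is contained in that of every $(\mbox{LLVF}_k)$, so $F(x_k,y_k)\leq F(x,y)$ and lower semicontinuity of $F$ closes the argument. Your identification of the upper semicontinuity of $\varphi$ as the only nontrivial step is accurate. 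One minor remark: hypothesis~(2) already hands you the accumulation point, so compactness of $\mathcal{C}$ is not needed to extract the subsequence---its role is rather to guarantee that such an accumulation point exists in the first place and that the limit stays in $\mathcal{C}$.
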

Subsequently, based on the study given by Fabin, Fernando, and Cristian  \cite{fabin-12}, we give a
formulation of dual problem of \eqref{w-p-c-P}, then we provide  a new complete characterization of strong duality.
%%%%%%%%%%%%%%%%%%%%%%%%%%%%%%%%%%%%%%%%%%%%%%%%%%%%%%%%%%%%%%%%%%%%%%%%%%%%%%%%%%%%%%%%%%%%%%%%%%%%%%%%%%%%%%%%%%%%%%%%%%%%%%%%%%%%%%%%%%%%%
To proceed, for any $\epsilon > 0$, we consider the Lagrangian
\begin{equation*}
L_{\epsilon}\left(\varsigma ,\alpha ,\beta ,\gamma , x,y \right) := \varsigma F(x,y)+\gamma\left(f(x, y) - \varphi(x) - \epsilon\right)
 +{\displaystyle \sum_{i=1}^{k}} \ \alpha_{i} G_{i}(x,y)+ {\displaystyle \sum_{j=1}^{p}} \ \beta_{j} g_{j}(x,y)
\end{equation*}
and define the dual problem \eqref{dual-w-p-c-P} of \eqref{w-p-c-P} as
\begin{equation}\label{dual-w-p-c-P}\tag{D$_{\epsilon}$}
\underset{ \underset{ \gamma \in\mathbb{R}}  {\left( \alpha ,\beta\right) \in \mathbb{R}_{+}^{k+p}}} {\sup} \ {\displaystyle \inf_{\left( x,y\right) \in\mathbb{R}^{n}\times\mathbb{R}^{m}}} \ L_{\epsilon}\left(1 ,\alpha ,\beta ,\gamma , x,y \right).
\end{equation}
It can be seen that for all $\alpha\in\mathbb{R}_{+}^{k}$, $\beta\in\mathbb{R}_{+}^{p}$, and $\gamma\in\mathbb{R}$, we have
\begin{equation*}
	{\displaystyle \inf_{\left( x,y\right) \in\mathbb{R}^{n}\times\mathbb{R}^{m}}} \ L_{\epsilon}\left(\varsigma ,\alpha ,\beta ,\gamma , x,y \right)\;\; \leq \;\; \varsigma  {\mathcal V}\left( {\mathcal P}_{\epsilon}\right), \;\;\, \forall \varsigma \geq 0,
\end{equation*}
which corresponds to a weak duality condition. To provide the strong duality result, we introduce some regularity conditions. To proceed, define
\begin{equation*}
	\Psi_{\epsilon} \left( \mathbb{R}^{n}\times\mathbb{R}^{m}\right) := \left\{\left(F(x,y),\, f(x,y)-\varphi\left(x\right)-\epsilon,\, G(x,y),\, g(x,y) \right) \in\mathbb{R}^{2+k+p}:\;\, (x,y)\in \mathbb{R}^{n}\times\mathbb{R}^{m}\right\}
\end{equation*}
and consider the following assumptions:
\begin{assum} \label{ass-s-1}It holds  that
\[
\text{cone} \ \bigg ( int \bigg [ \text{co} \left( \Psi_{\epsilon} \left( \mathbb{R}^{n}\times\mathbb{R}^{m}\right) \right)-{\mathcal V}\eqref{w-p-c-P} \left( 1,0_{\mathbb{R}^{1+k+p}}\right)  + \left( \mathbb{R}_{+}\times\mathbb{R}^{1+k+p}_+\right) \bigg] \bigg) \;\; \mbox{ is pointed.}
\]
\end{assum}
\begin{assum} \label{ass-s-2}
	$\left( 0,0\right) \notin int \bigg ( \text{co} \left( \Psi_{\epsilon} \left( \mathbb{R}^{n}\times\mathbb{R}^{m}\right) \right)-{\mathcal V}\eqref{w-p-c-P} \left( 1,0_{\mathbb{R}^{1+k+p}}\right)  + \left( \mathbb{R}_{+}\times\mathbb{R}^{1+k+p}_+\right) \bigg)$.
\end{assum}
\begin{theorem}\label{thm-w-p-c-s}
	Let $\epsilon > 0$. Suppose that ${\mathcal V}\eqref{w-p-c-P}$ is finite and that
\begin{equation}\label{EmpE}
int \bigg ( \text{co} \left( \Psi_{\epsilon} \left( \mathbb{R}^{n}\times\mathbb{R}^{m}\right) \right)+ \left( \mathbb{R}_{+}\times\mathbb{R}^{1+k+p}_+\right) \bigg) \neq\emptyset.
\end{equation}
Then, for each $\epsilon > 0$, Assumption \ref{ass-s-1} (resp. Assumption \ref{ass-s-2}) holds if and only if there exists a vector  $\left(\bar{\varsigma}, \bar{\gamma},\bar{\alpha},\bar{\beta}\right)\neq 0 $ such that
	\begin{equation*}
		{\displaystyle \inf_{\left( x,y\right) \in\mathbb{R}^{n}\times\mathbb{R}^{m}}} \ L_{\epsilon}\left( \bar{\varsigma},\bar{\alpha} ,\bar{\beta} ,\bar{\gamma} , x,y \right)= \bar{\varsigma}  {\mathcal V}\eqref{w-p-c-P}.
	\end{equation*}
\end{theorem}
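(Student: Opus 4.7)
The plan is to view \eqref{w-p-c-P} as a pure inequality-constrained instance of the nonconvex program treated by Theorem \ref{fin-nonconvex}, via the identification $\phi := F$ and $\psi := (f - \varphi - \epsilon,\, G_1, \ldots, G_k,\, g_1, \ldots, g_p)$ (no equality block). Under this identification the image $\Psi(\mathbb{R}^{n+m})$ used in Theorem \ref{fin-nonconvex} coincides with $\Psi_{\epsilon}(\mathbb{R}^{n}\times\mathbb{R}^{m})$, and conditions $A(1)$ and $A(2)$ there become Assumptions \ref{ass-s-1} and \ref{ass-s-2} respectively. The regularization parameter $\epsilon > 0$ plays a decisive role: unlike the bare value-function constraint $f-\varphi\leq 0$, which obstructed the generalized Slater condition \eqref{cq-w-p-cLLVF} at the end of Section 3, the shifted inequality $f(x,y)-\varphi(x)-\epsilon\leq 0$ is strictly satisfied whenever $y\in\mathcal{S}(x)$ (the left-hand side then equals $-\epsilon<0$), restoring the generalized Slater condition associated with $\psi$.

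For the ``only if'' direction I would apply the inequality-only version of Theorem \ref{fin-nonconvex}: the finiteness of $\mathcal{V}\eqref{w-p-c-P}$ and the interior hypothesis \eqref{EmpE} are assumed, the Slater condition follows from the remark above, and Assumption \ref{ass-s-1} (resp.\ Assumption \ref{ass-s-2}) is precisely $A(1)$ (resp.\ $A(2)$). The theorem delivers multipliers $(\bar{\alpha},\bar{\beta},\bar{\gamma})\in\mathbb{R}_{+}^{k}\times\mathbb{R}_{+}^{p}\times\mathbb{R}_{+}$ satisfying
\[
\inf_{(x,y)\in\mathbb{R}^{n}\times\mathbb{R}^{m}} L_{\epsilon}(1,\bar{\alpha},\bar{\beta},\bar{\gamma},x,y)=\mathcal{V}\eqref{w-p-c-P},
\]
so $(\bar{\varsigma},\bar{\gamma},\bar{\alpha},\bar{\beta}):=(1,\bar{\gamma},\bar{\alpha},\bar{\beta})\ne 0$ is the required quadruple. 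Alternatively, one may prove this implication from scratch by a Hahn--Banach separation of $(0,0)$ from the set
\[
E:=\mathrm{co}\bigl(\Psi_{\epsilon}(\mathbb{R}^{n}\times\mathbb{R}^{m})\bigr)-\mathcal{V}\eqref{w-p-c-P}(1,0_{\mathbb{R}^{1+k+p}})+\mathbb{R}_{+}\times\mathbb{R}_{+}^{1+k+p},
\]
using \eqref{EmpE} to ensure $\mathrm{int}\,E\neq\emptyset$ and Assumption \ref{ass-s-1}/\ref{ass-s-2} to push the separator to the required nonzero form.

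For the converse, starting from a nonzero $(\bar{\varsigma},\bar{\gamma},\bar{\alpha},\bar{\beta})$ with $\inf L_{\epsilon}=\bar{\varsigma}\mathcal{V}\eqref{w-p-c-P}$, the lower-boundedness of $L_{\epsilon}$ over the whole $\mathbb{R}^{n+m}$ forces $\bar{\varsigma},\bar{\gamma}\geq 0$ and $(\bar{\alpha},\bar{\beta})\in\mathbb{R}_{+}^{k+p}$ (otherwise the infimum would be $-\infty$ after driving a violated coordinate to $+\infty$). The identity then rewrites as nonnegativity of the linear functional $(\bar{\varsigma},\bar{\gamma},\bar{\alpha},\bar{\beta})$ on $\Psi_{\epsilon}(\mathbb{R}^{n+m})-\mathcal{V}\eqref{w-p-c-P}(1,0_{\mathbb{R}^{1+k+p}})$, which extends by linearity and monotonicity to the whole of $E$. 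This prevents $(0,0)$ from lying in $\mathrm{int}\,E$, yielding Assumption \ref{ass-s-2}; for Assumption \ref{ass-s-1}, a bipolar/closure argument converts the existence of such a nontrivial support of $E$ into the pointedness of $\mathrm{cone}(\mathrm{int}\,E)$.

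The main obstacle is the pointedness half of the converse: one must rule out that the linear functional supports $E$ only along a lineality direction, and this is where the nonempty-interior hypothesis \eqref{EmpE} becomes indispensable. A secondary bookkeeping concern is the distinction between the normalized Lagrange case $\bar{\varsigma}>0$ (tied to Assumption \ref{ass-s-1}, which after rescaling gives $\bar{\varsigma}=1$) and the singular Fritz--John case $\bar{\varsigma}=0$ (admissible under Assumption \ref{ass-s-2}), which must be tracked carefully when interpreting the ``$\ne 0$'' clause in the theorem.
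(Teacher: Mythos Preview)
The paper's own proof is a single line: it invokes \cite[Theorem 3.1]{fabin-12} directly, since \eqref{w-p-c-P} is exactly an instance of the abstract single-constraint-block problem treated there, with $\Psi_{\epsilon}$ playing the role of the image map. Your identification of \eqref{w-p-c-P} with that framework is the right move, and the Hahn--Banach separation sketch you give as an alternative is essentially the mechanism behind that cited theorem.

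However, your primary route through Theorem~\ref{fin-nonconvex} has a real gap. That theorem, as recorded in the paper, (i) carries the generalized Slater hypothesis \eqref{cq-w-p-casnorm} and (ii) is stated only as a one-way implication yielding $\bar\varsigma=1$. You try to manufacture Slater from the regularization, but your argument only shows that the \emph{value-function} constraint $f-\varphi-\epsilon$ can be made strictly negative at any $y\in\mathcal S(x)$; it says nothing about whether $G_i$ and $g_j$ can simultaneously be driven negative, and certainly does not yield the full cone condition $\overline{\mathrm{cone}}\big((f-\varphi-\epsilon,G,g)(\mathbb{R}^{n+m})+\mathbb{R}_+^{1+k+p}\big)=\mathbb{R}^{1+k+p}$. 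Indeed, the present theorem is precisely the Fritz--John statement that does \emph{not} assume Slater and allows $\bar\varsigma=0$; it is the \emph{next} theorem in the paper that adds \eqref{cq-w-p-c} to upgrade to $\bar\varsigma=1$. So routing through Theorem~\ref{fin-nonconvex} conflates the two results.

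A second, smaller issue is in your converse: you claim that finiteness of $\inf L_\epsilon$ forces $\bar\varsigma,\bar\gamma\ge 0$ and $(\bar\alpha,\bar\beta)\in\mathbb{R}_+^{k+p}$ by ``driving a violated coordinate to $+\infty$''. That argument presupposes each of $F$, $f-\varphi$, $G_i$, $g_j$ is unbounded above on $\mathbb{R}^{n+m}$, which is nowhere assumed. The correct converse (and the one in \cite{fabin-12}) runs through the geometry of $E$: the equality $\inf L_\epsilon=\bar\varsigma\,\mathcal V\eqref{w-p-c-P}$ says the linear form $(\bar\varsigma,\bar\gamma,\bar\alpha,\bar\beta)$ is nonnegative on $\Psi_\epsilon(\mathbb{R}^{n+m})-\mathcal V\eqref{w-p-c-P}(1,0)$, and nonnegativity on the added orthant $\mathbb{R}_+\times\mathbb{R}_+^{1+k+p}$ is what forces the signs, not any growth property of the data. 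With that correction, your support/bipolar argument for Assumptions~\ref{ass-s-1}--\ref{ass-s-2} goes through.
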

\begin{proof}
	The result follows from \cite[Theorem 3.1]{fabin-12}.
\end{proof}

Next, we provide a strong duality result when $\bar{\varsigma} > 0$. For this result,  we need the assumption
\begin{equation}\label{cq-w-p-c}
\overline{cone} \, \left(\left(f-\varphi -\epsilon, \; G,\; g\right) \left(\mathbb{R}^{n}\times\mathbb{R}^{m}\right)  + \mathbb{R}_{+}^{1+k+p}\right)= \mathbb{R}^{1+k+p}.
\end{equation}
The qualification condition \eqref{cq-w-p-c} is a generalized form of the Slater condition for problem \eqref{w-p-c-P}. Note that thanks to our regularization, condition \eqref{cq-w-p-c} can be fulfilled.
\begin{theorem}Let $\epsilon > 0$ and suppose that ${\mathcal V}\eqref{w-p-c-P}$ is finite and condition \eqref{EmpE} holds.
	Furthermore, if condition \eqref{cq-w-p-c} is satisfied, then strong duality between \eqref{w-p-c-P} and \eqref{dual-w-p-c-P} holds; i.e.,  ${\mathcal V}\eqref{w-p-c-P}={\mathcal V}\eqref{dual-w-p-c-P}$. Moreover, there exists a Lagrange multiplier vector $\left(\bar{\gamma},\bar{\alpha},\bar{\beta}\right)\in \mathbb{R}_{+}\times\mathbb{R}^{k}_{+}\times\mathbb{R}^{p}_{+}$ such that we have
	\begin{equation*}
		{\mathcal V}\eqref{w-p-c-P}=	{\displaystyle \inf_{\left( x,y\right) \in\mathbb{R}^{n}\times\mathbb{R}^{m}}} \ L_{\epsilon}\left(1,\bar{\alpha} ,\bar{\beta} ,\bar{\gamma} , x,y \right).
	\end{equation*}
\end{theorem}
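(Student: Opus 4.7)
The plan is to obtain the desired Lagrange multipliers by specializing the abstract multiplier statement of Theorem \ref{thm-w-p-c-s}, and then to use the Slater-type condition \eqref{cq-w-p-c} to force the scalar cost-multiplier $\bar{\varsigma}$ to be strictly positive; once $\bar{\varsigma}>0$, rescaling produces the classical form of strong duality.

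To apply Theorem \ref{thm-w-p-c-s}, I would first argue that the finiteness of ${\mathcal V}\eqref{w-p-c-P}$ together with condition \eqref{EmpE} implies Assumption \ref{ass-s-2}: a standard separation argument shows that if $(0,0)$ were an interior point of $\mathrm{co}(\Psi_\epsilon(\mathbb{R}^{n+m})) - {\mathcal V}\eqref{w-p-c-P}(1,0_{\mathbb{R}^{1+k+p}}) + (\mathbb{R}_+\times\mathbb{R}_+^{1+k+p})$, one could extract a convex combination of constraint-violation tuples whose aggregate primal cost falls strictly below ${\mathcal V}\eqref{w-p-c-P}$, contradicting the meaning of this value. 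Theorem \ref{thm-w-p-c-s} then supplies a nonzero vector $(\bar{\varsigma},\bar{\gamma},\bar{\alpha},\bar{\beta})\in\mathbb{R}_+\times\mathbb{R}_+\times\mathbb{R}_+^{k}\times\mathbb{R}_+^{p}$ satisfying
\[
\inf_{(x,y)\in\mathbb{R}^{n+m}} L_\epsilon(\bar{\varsigma},\bar{\alpha},\bar{\beta},\bar{\gamma},x,y) = \bar{\varsigma}\,{\mathcal V}\eqref{w-p-c-P}.
\]

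The main obstacle, and I expect it to be the crucial step, is to exclude the degenerate case $\bar{\varsigma}=0$. I would argue by contradiction: if $\bar{\varsigma}=0$, then $(\bar{\gamma},\bar{\alpha},\bar{\beta})\neq 0$ and the displayed identity reduces to
\[
\bar{\gamma}(f(x,y)-\varphi(x)-\epsilon) + \sum_{i=1}^{k}\bar{\alpha}_i G_i(x,y) + \sum_{j=1}^{p}\bar{\beta}_j g_j(x,y) \geq 0 \quad \forall\,(x,y)\in\mathbb{R}^{n+m}.
\]
Because $(\bar{\gamma},\bar{\alpha},\bar{\beta})$ has nonnegative entries, the same inequality extends to every vector of the form $(f-\varphi-\epsilon,G,g)(x,y)+r$ with $r\in\mathbb{R}_+^{1+k+p}$, hence to every conic combination of such vectors, and by continuity to the closed conic hull $\overline{cone}\bigl((f-\varphi-\epsilon,G,g)(\mathbb{R}^{n+m})+\mathbb{R}_+^{1+k+p}\bigr)$. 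The qualification \eqref{cq-w-p-c} identifies this closure with all of $\mathbb{R}^{1+k+p}$, whence $(\bar{\gamma},\bar{\alpha},\bar{\beta})=0$, contradicting the nontriviality provided by Theorem \ref{thm-w-p-c-s}.

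Having $\bar{\varsigma}>0$, I would rescale by setting $\bar{\gamma}':=\bar{\gamma}/\bar{\varsigma}$, $\bar{\alpha}':=\bar{\alpha}/\bar{\varsigma}$, $\bar{\beta}':=\bar{\beta}/\bar{\varsigma}$, all of which remain in the appropriate nonnegative orthants. Dividing the Lagrangian identity by $\bar{\varsigma}$ yields $\inf_{(x,y)} L_\epsilon(1,\bar{\alpha}',\bar{\beta}',\bar{\gamma}',x,y) = {\mathcal V}\eqref{w-p-c-P}$, giving ${\mathcal V}\eqref{dual-w-p-c-P}\geq{\mathcal V}\eqref{w-p-c-P}$. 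Combined with the weak duality inequality already observed in the excerpt, this forces ${\mathcal V}\eqref{w-p-c-P}={\mathcal V}\eqref{dual-w-p-c-P}$, and $(\bar{\gamma}',\bar{\alpha}',\bar{\beta}')$ is the Lagrange multiplier asserted by the theorem.
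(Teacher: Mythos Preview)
The paper's own proof consists solely of the citation ``follows from \cite[Corollary~3.2]{fabin-12}.'' Your proposal reconstructs the argument behind that corollary: obtain a nonzero abstract multiplier $(\bar{\varsigma},\bar{\gamma},\bar{\alpha},\bar{\beta})$ via Theorem~\ref{thm-w-p-c-s}, use the generalized Slater condition \eqref{cq-w-p-c} to exclude $\bar{\varsigma}=0$, and rescale. The last two steps are correct and match the intended route; in particular, your polar-cone argument showing that \eqref{cq-w-p-c} forces $(\bar{\gamma},\bar{\alpha},\bar{\beta})=0$ when $\bar{\varsigma}=0$ is exactly the right mechanism.

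The gap is in your first step. You assert that finiteness of ${\mathcal V}\eqref{w-p-c-P}$ together with \eqref{EmpE} implies Assumption~\ref{ass-s-2}, because otherwise ``one could extract a convex combination of constraint-violation tuples whose aggregate primal cost falls strictly below ${\mathcal V}\eqref{w-p-c-P}$, contradicting the meaning of this value.'' In the nonconvex setting of this section that is not a contradiction: ${\mathcal V}\eqref{w-p-c-P}$ bounds $F$ from below only at \emph{individual} feasible points, not at convex combinations of image tuples $\Psi_{\epsilon}(x_l,y_l)$. Without convexity, Assumption~\ref{ass-s-2} is a genuine additional hypothesis---note that the paper's own Theorem~\ref{fin-nonconvex}, which is the template for the present result, lists $A(1)$ or $A(2)$ explicitly among its assumptions. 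Either one of Assumptions~\ref{ass-s-1} or~\ref{ass-s-2} is tacitly in force here (in which case your invocation of Theorem~\ref{thm-w-p-c-s} is legitimate and the remainder of your proof goes through), or a bilevel-specific reason why it holds for \eqref{w-p-c-P} must be supplied; the generic separation sketch you give does not provide one.
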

\begin{proof}
	The result follows from \cite[Corollary 3.2]{fabin-12}.
\end{proof}

\subsection{Without a Slater condition}
To close this section, we provide a new characterization of strong duality for bilevel optimization problem, covering
situations where a Slater-type condition may fail. To proceed, consider the  special bilevel program
\begin{equation}\label{spe-bilevel}\tag{P$_{s}$}
{\displaystyle \min_{x,y}} \ F\left( x,y\right) \ \text{s.t.} \ x\in X, y\in {\mathcal S}\left( x\right),
\end{equation}
where, $X\subset \mathbb{R}^{n}$ and for each $x\in X$, ${\mathcal S}\left( x\right)$ is the set of optimal solutions of the lower level problem \eqref{llp-1}, which in this case is given
\begin{equation*}
{\mathcal S}\left( x\right) =\left\{y\in Y|\; f\left(x,y\right) -\varphi\left( x\right) \leq 0\right\},
\end{equation*}
where the lower level feasible set is $Y\subset \mathbb{R}^{m}$. As before, problem \eqref{spe-bilevel} can take the form
\begin{equation}\label{spe-bilevel-llvf}\tag{LLVF$_{s}$}
\left\{
\begin{array}{l}
{\displaystyle \min_{x,y}} \ F\left( x,y\right) \\
\left( x,y\right) \in X\times Y, \\
f\left( x,y\right) -\varphi\left( x\right)  \leq 0.
\end{array}
\right.
\end{equation}
With  $L\left(1 ,\gamma , x,y \right) := F\left( x,y\right) +\gamma\left( f\left( x,y\right) -\varphi\left( x\right) \right)$, the dual of problem  \eqref{dual-spe-bilevel-llvf} is given by
\begin{equation}\label{dual-spe-bilevel-llvf}\tag{D$_{s}$}
{\displaystyle \sup_{\gamma \in\mathbb{R}_+}} \ {\displaystyle \inf_{\left( x,y\right) \in X\times Y }} \ L\left(1 ,\gamma , x,y \right).
\end{equation}

To provide our result, denote the feasible set of problem \eqref{spe-bilevel-llvf} by
\begin{equation*}
	{\mathcal F} := \left\{\left( x,y\right) \in X\times Y|\;\, f\left( x,y\right) -\varphi\left( x\right)  \leq 0 \right\}.
\end{equation*}
The set of optimal solutions of problem \eqref{spe-bilevel-llvf} is given as
\begin{equation*}
	{\mathcal O} := \arg\min\left\{F\left( x,y\right) |\left( x,y\right) \in {\mathcal F} \right\}
\end{equation*}
and
\begin{equation*}
	\begin{array}{lcl}
		{\mathcal F}^{-} & := & \{\left( x,y\right) \in X\times Y |\;\, f\left( x,y\right) -\varphi\left( x\right)  < 0 \}, \\
		{\mathcal F}^{=} & := & \{\left( x,y\right) \in X\times Y |\;\, f\left( x,y\right) -\varphi\left( x\right)  = 0 \}, \\
		{\mathcal F}^{+} & := & \{\left( x,y\right) \in X\times Y |\;\, f\left( x,y\right) -\varphi\left( x\right)  > 0 \}.
	\end{array}
\end{equation*}
Analogously, we introduce the following sets
\begin{equation*}
	\begin{array}{lcl}
		\Xi^{-} & := & \{\left( x,y\right) \in X\times Y |\;\,F\left( x,y\right)  < {\mathcal V} \eqref{spe-bilevel-llvf} \},  \\
		\Xi^{=} & := & \{\left( x,y\right) \in X\times Y |\;\,F\left( x,y\right)  = {\mathcal V} \eqref{spe-bilevel-llvf} \}, \\
		\Xi^{+} & := & \{\left( x,y\right) \in X\times Y |\;\,F\left( x,y\right)  > {\mathcal V} \eqref{spe-bilevel-llvf} \}.
	\end{array}
\end{equation*}

\begin{remark} Based on the structure of bilevel optimization problems, one can deduce that  we have ${\mathcal F}^{-}={\mathcal F}^{+}=\emptyset$.
\end{remark}
Also, due to the lack of the Slater condition, the strong duality requires a closed-cone constraint qualification. To proceed, set
\begin{equation*}
\Psi \left(X\times Y\right) =\left\{\left(F\left( x,y\right), \, f\left( x,y\right)-\varphi\left( x\right) \right) \in\mathbb{R}^{2}\left| (x,y)\in X\times Y\right.\right\},
\end{equation*}
and consider the following assumption
\begin{assum} \label{ass-s-1-2}
$\text{cone} \ \left( \text{co} \left( \Psi \left(X\times Y\right) \right)-{\mathcal V} \eqref{spe-bilevel-llvf} \left( 1,0\right)  + int \left( \mathbb{R}^{2}_{+}\right)  \right)$ is pointed.
\end{assum}
Based on the above construction  we characterize  Assumption \ref{ass-s-1-2}.
\begin{proposition}\label{prop-final}
	Consider the problem \eqref{spe-bilevel-llvf}. Suppose that ${\mathcal F}\neq \emptyset$ and that ${\mathcal V}\eqref{spe-bilevel-llvf}$ is finite. The following assertions hold:
	\begin{enumerate}
		\item Suppose that ${\mathcal O}\neq \emptyset$. Then, $\text{cone} \ \bigg ( \text{co} \left( \Psi \left(X\times Y\right) \right)-{\mathcal V} \eqref{spe-bilevel-llvf} \left( 1,0\right)  + int \left( \mathbb{R}^{2}_{+}\right)  \bigg)$ is pointed if and only if ${\mathcal O}\cap {\mathcal F}^{=}\neq \emptyset$ and $\Xi^{+}\cap {\mathcal F}^{=}\neq \emptyset$.
			\item Assume that ${\mathcal O}= \emptyset$. Then, $\text{cone} \ \bigg ( \text{co} \left( \Psi \left(X\times Y\right) \right)-{\mathcal V} \eqref{spe-bilevel-llvf} \left( 1,0\right)  + int \left( \mathbb{R}^{2}_{+}\right)  \bigg)$ is pointed if and only if $\Xi^{+}\cap {\mathcal F}^{=}\neq \emptyset$.
	\end{enumerate}
\end{proposition}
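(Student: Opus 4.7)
I would first exploit the structural identity $\mathcal{F}=\mathcal{F}^{=}$, which follows directly from the remark preceding the proposition: since $\varphi(x)=\inf_{y'\in Y}f(x,y')$, for every $(x,y)\in X\times Y$ one has $f(x,y)-\varphi(x)\geq 0$, so $\mathcal{F}^{-}=\emptyset$. Consequently $\mathcal{O}\subset\mathcal{F}=\mathcal{F}^{=}$, so in Part~1 the condition $\mathcal{O}\cap\mathcal{F}^{=}\neq\emptyset$ reduces to the standing hypothesis $\mathcal{O}\neq\emptyset$. The only genuinely new condition to characterize, in both parts, is
\[
\Xi^{+}\cap\mathcal{F}^{=}\neq\emptyset
\;\Longleftrightarrow\;
\exists\,(x,y)\in\mathcal{F}\text{ with }F(x,y)>V,
\]
where $V:=\mathcal{V}(\mathrm{LLVF}_s)$. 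Abbreviating $A:=\mathrm{co}(\Psi(X\times Y))-V(1,0)+\operatorname{int}(\mathbb{R}_{+}^{2})$ and $C:=\mathrm{cone}(A)$, the remainder of the argument characterizes pointedness of $C$ in these terms.

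Next I would reformulate pointedness of $C$ as a separation statement in $\mathbb{R}^{2}$: $C$ is pointed iff there exists $(\alpha,\beta)\neq(0,0)$ such that $\alpha u_{1}+\beta u_{2}>0$ for every $u=(u_{1},u_{2})\in A$. Because $\operatorname{int}(\mathbb{R}_{+}^{2})$ appears as a summand, any such $(\alpha,\beta)$ must satisfy $\alpha,\beta\geq 0$, so the analysis reduces to choosing $(\alpha,\beta)\in\mathbb{R}_{+}^{2}\setminus\{0\}$ making $\alpha(F(x,y)-V)+\beta(f(x,y)-\varphi(x))>0$ on $X\times Y$. The two contributions to this inequality come from points in $\mathcal{F}^{=}$ (where the $\beta$-term vanishes and only $F-V\geq 0$ matters) and points in $\mathcal{F}^{+}$ (where the $\beta$-term is strictly positive and dominates, provided $\beta$ is large enough).

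\textbf{Sufficiency.} Assume the relevant condition(s) of the proposition hold. In Part~1, fix $(\bar x,\bar y)\in\mathcal{O}$ with $F(\bar x,\bar y)=V$, $h(\bar x,\bar y)=0$, and fix $(\tilde x,\tilde y)\in\Xi^{+}\cap\mathcal{F}^{=}$ with $F(\tilde x,\tilde y)>V$, $h(\tilde x,\tilde y)=0$. Using the reference point $(\tilde x,\tilde y)$ to bound $F-V$ from below on the relevant portion of $\mathrm{co}(\Psi)$, and using $\beta$ large enough to dominate any remaining negativity on $\mathcal{F}^{+}$, I would construct $(\alpha,\beta)\in\mathbb{R}_{+}^{2}\setminus\{0\}$ strictly positive on $A$, giving pointedness of $C$. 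Part~2 is analogous, using any $(\tilde x,\tilde y)\in\Xi^{+}\cap\mathcal{F}^{=}$ (whose existence is in fact automatic once $\mathcal{O}=\emptyset$ and $\mathcal{F}\neq\emptyset$).

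\textbf{Necessity (contrapositive).} If $\Xi^{+}\cap\mathcal{F}^{=}=\emptyset$ then every $(x,y)\in\mathcal{F}$ satisfies $F(x,y)=V$, so $\Psi(\mathcal{F})=\{(V,0)\}$ and $(0,0)\in\mathrm{co}(\Psi)-V(1,0)$. Using this together with any point $\Psi(x',y')-V(1,0)$ produced by $\mathcal{F}^{+}$, I would exhibit two elements of $A$ (enriched by the summand $\operatorname{int}(\mathbb{R}_{+}^{2})$) whose positive multiples cancel, hence $C\cap(-C)\neq\{0\}$. The case $\mathcal{O}=\emptyset$ in Part~2 is handled symmetrically: failure of $\Xi^{+}\cap\mathcal{F}^{=}\neq\emptyset$ there forces $\mathcal{F}=\emptyset$, contradicting the hypothesis and thus proving the equivalence vacuously on that branch. \textbf{The main obstacle} is the necessity step: the Minkowski summand $\operatorname{int}(\mathbb{R}_{+}^{2})$ keeps $A$ in the open upper half-plane, so opposite rays in $C$ can only appear through the $x$-axis in the limit. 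Producing them explicitly requires combining the degenerate image $(0,0)$ (coming from $\mathcal{F}^{=}$) with a point of $\mathcal{F}^{+}$ lying to the left of the vertical axis, and exploiting positive scalings of $A$ to drive the $y$-coordinate arbitrarily small while keeping a definite negative horizontal displacement.
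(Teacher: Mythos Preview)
The paper gives no proof of this proposition; it is stated without argument and, together with the subsequent theorem, is implicitly deferred to the Flores-Baz\'an--Flores-Baz\'an--Vera reference on strong duality with a single constraint. There is therefore no in-paper proof to compare against, so I assess your proposal on its own terms.

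Your opening reduction is correct and important: since $f(x,y)-\varphi(x)\ge 0$ on all of $X\times Y$, one has $\mathcal{F}^{-}=\emptyset$, hence $\mathcal{F}=\mathcal{F}^{=}$ and $\mathcal{O}\cap\mathcal{F}^{=}=\mathcal{O}$. The reformulation of pointedness via a strictly separating linear functional is also sound for a convex cone in $\mathbb{R}^{2}$, and your sufficiency sketch is plausible, though the explicit construction of $(\alpha,\beta)$ is left vague.

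The necessity direction, however, has a gap that you identify as the ``main obstacle'' but do not close---and in fact it cannot be closed as written. Because the second coordinate of every point of $\Psi(X\times Y)-V(1,0)$ equals $f-\varphi\ge 0$, every element of $A=\mathrm{co}(\Psi(X\times Y))-V(1,0)+\mathrm{int}(\mathbb{R}_{+}^{2})$ has \emph{strictly positive} second coordinate. Thus $C=\mathrm{cone}(A)\subset\{u_{2}>0\}\cup\{0\}$, and $C\cap(-C)=\{0\}$ \emph{automatically}: in this bilevel specialization the cone is always pointed, regardless of whether $\Xi^{+}\cap\mathcal{F}^{=}\neq\emptyset$. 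Your proposed contrapositive---manufacturing opposite rays from the degenerate image point $(0,0)$ together with a point of $\mathcal{F}^{+}$---cannot succeed, because no nonzero element of $C$ ever reaches the horizontal axis; the limiting rays you describe lie only in $\overline{C}$, not in $C$ itself.

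Read literally, the ``only if'' direction would then force $\Xi^{+}\cap\mathcal{F}^{=}\neq\emptyset$ under the standing hypotheses, which fails for instance when $F$ is constant on $\mathcal{F}$. The proposition appears to be a direct transcription of the general single-constraint characterization (where the constraint function may take negative values and the cone can genuinely fail to be pointed); the full biconditional does not survive the specialization $g=f-\varphi\ge 0$ without adjustment. You should flag this explicitly rather than attempt to produce opposite rays that provably do not exist in $C$.
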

The following theorem provides a complete characterization of strong duality for our bilevel program.
\begin{theorem}\label{th-final}
	Consider problem \eqref{spe-bilevel-llvf}. Assume that ${\mathcal F}\neq \emptyset$ and that ${\mathcal V} \eqref{spe-bilevel-llvf}$ is finite. The following assertions are equivalent.
	\begin{enumerate}
		\item Strong duality holds; i.e.,  ${\mathcal V}\eqref{spe-bilevel-llvf}={\mathcal V}\eqref{dual-spe-bilevel-llvf}$;
		\item $\text{cone} \ \bigg ( \text{co} \left( \Psi \left(X\times Y\right) \right)-{\mathcal V}\eqref{spe-bilevel-llvf} \left( 1,0\right)  + int \left( \mathbb{R}^{2}_{+}\right)  \bigg)$ is pointed. % and ${\mathcal F}^{+}\cap \Xi^{-}= \emptyset$ holds.
	\end{enumerate}
\end{theorem}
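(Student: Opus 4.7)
The plan is to derive the equivalence by exploiting the two-dimensional structure of the image set together with Proposition \ref{prop-final}, which already translates pointedness into a set-theoretic statement about optimal and feasible non-optimal points. First I would record weak duality: for any $\gamma \geq 0$ and any feasible $(x_{0},y_{0}) \in {\mathcal F}$ one has $f(x_{0},y_{0}) - \varphi(x_{0}) = 0$, so $L(1,\gamma, x_{0},y_{0}) = F(x_{0},y_{0})$; taking the infimum over $X\times Y$ and the supremum over $\gamma \geq 0$ yields $\mathcal{V}\eqref{dual-spe-bilevel-llvf} \leq \mathcal{V}\eqref{spe-bilevel-llvf}$. This reduces the problem to showing that the reverse inequality is equivalent to pointedness.

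For the implication $(2)\Rightarrow(1)$, I would set
\[
C := \operatorname{cone}\bigl(\operatorname{co}(\Psi(X\times Y)) - \mathcal{V}\eqref{spe-bilevel-llvf}(1,0) + \operatorname{int}(\mathbb{R}^{2}_{+})\bigr),
\]
and exploit that a pointed convex cone in $\mathbb{R}^{2}$ lies in a closed half-plane. There exists $(\lambda,\mu) \neq 0$ with $\lambda a + \mu b \geq 0$ for all $(a,b) \in C$. Since the summand $\operatorname{int}(\mathbb{R}^{2}_{+})$ is contained (after scaling) in $\overline{C}$, one forces $\lambda \geq 0$ and $\mu \geq 0$. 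Sending the perturbation $(r,s) \in \operatorname{int}(\mathbb{R}^{2}_{+})$ to $0$ yields $\lambda(F(x,y) - \mathcal{V}\eqref{spe-bilevel-llvf}) + \mu(f(x,y) - \varphi(x)) \geq 0$ on $X\times Y$. If $\lambda > 0$, then setting $\bar{\gamma} := \mu/\lambda \geq 0$ gives $\inf_{X\times Y} L(1,\bar{\gamma},\cdot) \geq \mathcal{V}\eqref{spe-bilevel-llvf}$, which combined with weak duality delivers strong duality. The case $\lambda = 0$ would be handled by invoking Proposition \ref{prop-final}: pointedness guarantees $\Xi^{+}\cap{\mathcal F}^{=} \neq \emptyset$, and this additional feasible non-optimal point allows a small perturbation of $(\lambda,\mu)$ inside the dual cone so that the first component becomes strictly positive, reducing to the previous case.

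For the implication $(1)\Rightarrow(2)$, I would argue by contrapositive: assume $C$ is not pointed. Using that $f - \varphi \geq 0$ on $X\times Y$ and that the added $\operatorname{int}(\mathbb{R}^{2}_{+})$ has strictly positive second coordinate, the set whose cone is taken sits in $\mathbb{R}\times\mathbb{R}_{>0}$, so any line through the origin contained in $\overline{C}$ must be horizontal. Hence $(-1,0) \in \overline{C}$, which means there is a sequence $(x_{n},y_{n}) \in X\times Y$ together with perturbations $(r_{n},s_{n}) \in \operatorname{int}(\mathbb{R}^{2}_{+})$ and scalars $t_{n} > 0$ such that $t_{n}(F(x_{n},y_{n})-\mathcal{V}\eqref{spe-bilevel-llvf} + r_{n}) \to -\infty$ while $t_{n}(f(x_{n},y_{n})-\varphi(x_{n})+s_{n}) \to 0$. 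Pulling this back shows that $F(x_{n},y_{n}) \to -\infty$ while $f(x_{n},y_{n}) - \varphi(x_{n})$ grows (if at all) much slower than $|F(x_{n},y_{n})|$, so $F(x_{n},y_{n}) + \gamma (f(x_{n},y_{n})-\varphi(x_{n})) \to -\infty$ for every $\gamma \geq 0$. Consequently $D(\gamma) = -\infty$ for all $\gamma$, contradicting the finiteness of $\mathcal{V}\eqref{spe-bilevel-llvf}$ together with strong duality.

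The main obstacle is the $\lambda = 0$ case in the first direction: demonstrating rigorously that pointedness, combined with the bilevel structure $({\mathcal F}^{-} = \emptyset)$ encoded in Proposition \ref{prop-final}, always allows one to select a separating functional with strictly positive weight on the objective. A secondary technical point is the careful pullback from the asymptotic direction $(-1,0) \in \overline{C}$ to an explicit sequence in $X\times Y$ in the second direction, which requires untangling the three summands (convex hull of $\Psi$, translation by $\mathcal{V}(1,0)$, and the interior perturbation) and controlling the scaling factor $t_{n}$.
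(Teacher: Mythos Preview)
The paper does not really give a proof here: it simply invokes Proposition~\ref{prop-final} and points to \cite[Theorem~4.1 and Corollary~4.2]{fabin-12}. Your separation-of-a-pointed-cone strategy is exactly the mechanism behind those cited results, so in spirit you are unpacking the same argument rather than proposing a different one.

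That said, there are two genuine issues in your write-up.

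\medskip
\noindent\textbf{Implication $(1)\Rightarrow(2)$.} Your own observation that the generating set
\[
A=\operatorname{co}\bigl(\Psi(X\times Y)\bigr)-\mathcal{V}\eqref{spe-bilevel-llvf}(1,0)+\operatorname{int}(\mathbb{R}^{2}_{+})
\]
lies in $\mathbb{R}\times\mathbb{R}_{>0}$ (because $f-\varphi\ge 0$ on $X\times Y$ and $s>0$) already finishes this direction: a convex cone generated by a subset of an \emph{open} half-space never contains a full line, so $C=\operatorname{cone}(A)$ is pointed unconditionally. The contrapositive hypothesis ``$C$ is not pointed'' is therefore vacuous in the bilevel setting, and the sequence argument you sketch is not needed. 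Moreover, that sketch contains two slips: from $(-1,0)\in\overline{C}$ one obtains $t_{n}a_{n}\to(-1,0)$, hence the first coordinate tends to $-1$, not to $-\infty$; and $a_{n}$ is a point of $A$, hence built from a \emph{convex combination} of values of $\Psi$, not from a single $(x_{n},y_{n})$, so the ``pullback'' to a sequence in $X\times Y$ does not go through as written.

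\medskip
\noindent\textbf{Implication $(2)\Rightarrow(1)$, the case $\lambda=0$.} This is the real gap, and your proposed fix (``perturb $(\lambda,\mu)$ using a point of $\Xi^{+}\cap\mathcal{F}^{=}$'') is not yet an argument. What you need is that the polar of $C$ is more than a single ray, i.e.\ that $\overline{C}$ is \emph{strictly} smaller than the closed upper half-plane; equivalently, that $(-1,0)\notin\overline{C}$. This is precisely where the finer information from Proposition~\ref{prop-final} (and ultimately from \cite{fabin-12}) enters: one must rule out the situation in which points $(x,y)$ with $F(x,y)<\mathcal{V}\eqref{spe-bilevel-llvf}$ (necessarily in $\mathcal{F}^{+}$, since $\mathcal{F}^{-}=\emptyset$) make the ratio $(f(x,y)-\varphi(x))/\bigl(\mathcal{V}\eqref{spe-bilevel-llvf}-F(x,y)\bigr)$ approach~$0$. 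Simply exhibiting a feasible non-optimal point does not by itself let you tilt the supporting hyperplane; you need a quantitative statement about the aperture of $C$. In short, for this direction you should either reproduce the relevant estimate from \cite[Theorem~4.1]{fabin-12} or, as the paper does, cite it.
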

\begin{proof}
It can be deduced from Proposition \ref{prop-final}; cf. \cite[Theorem 4.1 and Corollary 4.2]{fabin-12}.
\end{proof}
%%%%%%%%%%%%%%%%%%%%%%%%%%%%%%%%%%%%%%%%%%%%%%%%%%%%%%%%%%%%%%%%%%%%%%%%%%%%%%%%%%%%%%%%%%%%%%%%%%%%%%%%%%%%%%%%%%%%%%%%
\section{Conclusion}
In this paper, we have been given a duality approach for problem \eqref{calm0}. Since the bilevel programming
problem \eqref{calm0} does not satisfy the Slater condition, which is the key apparatus to establish strong duality, we have first provided an overview of some standard duality results to show that they are not applicable to our problem \eqref{calm0}. Afterwards, the concept of partial calmness is used to establish weak and strong duality results for problem \eqref{calm0}. In particular, Langrange, Fenchel-Lagrange, and Toland-Fenchel-Lagrange duality concepts were investigated for this problem, under some suitable conditions. For the first class of results, we introduce a regularization
for problem \eqref{calm0}. Then, we establish sufficient conditions ensuring strong duality for the obtained regularized problem \eqref{calm0} under a generalized Slater-type condition without the convexity assumptions and without. Finally, we have provided a strong duality result for a geometrically constrained  bilevel optimization problem without the Slater-condition.

%%%%%%%%%%%%%%%%%%%%%%%%%%%%%%%%%%%%%%%%%%%%%%%%%%%%%%%%%%%%%%%%%%%%%%%%%%%%%%%%%%%%%%%%%%%%%%%%%%%%%%%%%%%%%%%%%%%%%%%%%%%%%%%%%%%%%%%%%%%%%%
%%%%%%%%%%%%%%%%%%%%%%%%%%%%%%%%%%%%%%%%%%%%%%%%%%%%%%%%%%%%%%%%%%%%%%%%%%%%%%%%%%%%%%%%%%%%%%%%%%%%%%%%%%%%%%%%%%%%%%%%%%%%%%%%%%%%%%%%%%%%

\end{document}